\def\input@path{{/home/filiuspelei/Hope//}}
\newcounter{dummy} \numberwithin{dummy}{section}
\newtheorem{theorem}[dummy]{Theorem}
\newtheorem{prop}[dummy]{Proposition}
\newtheorem{lemma}[dummy]{Lemma}
\newtheorem{corollary}[dummy]{Corollary}
\newtheorem{conjecture}[dummy]{Conjecture}
\newtheorem{question}{Question}
\theoremstyle{definition}
\newtheorem{definition}[dummy]{Definition}
\newtheorem{remark}[dummy]{Remark}
\newtheorem{example}[dummy]{Example}
\DeclareMathOperator{\tr}{Tr}
\DeclareMathOperator{\h}{ht}
\DeclareMathOperator{\coker}{coker}
\DeclareMathOperator{\spec}{spec}
\DeclareMathOperator{\hm}{Hom}
\DeclareMathOperator{\ext}{Ext}
\DeclareMathOperator{\tor}{Tor}
\DeclareMathOperator{\md}{mod}
\DeclareMathOperator{\grade}{grade}
\DeclareMathOperator{\zz}{\Omega}
\DeclareMathOperator{\pd}{pd}
\DeclareMathOperator{\depth}{depth}
\DeclareMathOperator{\ann}{ann}
\DeclareMathOperator{\thick}{Thick}
\DeclareMathOperator{\supp}{supp}
\DeclareMathOperator{\cx}{cx}
\DeclareMathOperator{\codim}{codim}
\DeclareMathOperator{\ass}{Ass}
\DeclareMathOperator{\ABdim}{AB-dim}
\DeclareMathOperator{\join}{Join}
\DeclareMathOperator{\sing}{Sing}
\DeclareMathOperator{\proj}{Proj}
\DeclareMathOperator{\maxspec}{maxspec}
\DeclareMathOperator{\lin}{line}
\DeclareMathOperator{\sng}{Sing}
\newcommand{\NN}{\mathbb{N}}
\newcommand{\QQ}{\mathbb{Q}}
\newcommand{\AAA}{\mathbb{A}}
\newcommand{\PPP}{\mathbb{P}}
\newcommand{\C}{\mathcal{C}}
\newcommand{\M}{\mathcal{M}}
\newcommand{\Ss}{\mathfrak{S}}
\newcommand{\mm}{\mathfrak{m}}
\newcommand{\al}{\alpha}
\newcommand{\be}{\beta}
\newcommand{\ld}{\lambda}
\newcommand{\ph}{\varphi}
\newcommand{\sbe}{\subseteq}
\newcommand{\tns}{\otimes}
\newcommand{\xto}{\xrightarrow}
\newcommand{\del}{\backslash}
\newcommand{\dell}{\partial}
\newcommand{\bl}{\bullet}
\newcommand{\MCM}{\mbox{MCM}}
\begin{document}

\title{Cohomological support and the geometric join}

\author{Hailong Dao}
\address{Department of Mathematics\\
University of Kansas\\
Lawrence, KS 66045-7523, USA}
\email{hdao@ku.edu}

\author{William Sanders }
\address{Department of Mathematical Sciences\\
Norwegian University of Science and Technology\\ 
NO-7491 Trondheim, Norway}
\email{william.sanders@math.ntnu.no}

\begin{abstract}
Let $M,N$ be finitely generated modules over a local complete intersection $R$. Assume that for each $i>0$, $\tor^R_i(M,N)=0$. We prove that the cohomological support  of $M\otimes_R N$ (in the sense of Avramov-Buchweitz) is equal to the geometric join of the cohomological supports of $M,N$. Such result gives a new connection between two active areas or research, and immediately produces several surprising corollaries.  Naturally, it also  raises many intriguing new questions about the homological properties of modules over a complete intersection, some of those are investigated in the second half of this note.   

\end{abstract}

\thanks{This work would not have been possible without the wonderful hospitality and stimulating environment of various institutions:  MSRI, CRM, Dalhousie University and the University of Kansas. We would also like to thank David Eisenbud and Srikanth Iyengar for helpful comments and encouragements throughout this project.}

\maketitle

\section{Introduction}
Let $(R, \mm,k)$ be a local complete intersection and $M$ a finitely generated $R$-module. Inspired by the ideas of Quillen in modular representations context, Avramov and Buchweitz in \cite{AvramovBuchweitz00} defined a geometric object attached to the total Ext module $\bigoplus \ext^i_R(M,k)$. It was originally called the support variety, or cohomological support  of $M$, and denoted by $V^*(M)$ (see Section 2 for details). In this paper, we shall refer to this object  as the cohomological support of $M$. It is a closed subscheme of $ \PPP^{c-1}_k$, where $c$ is the codimension of $R$.  

The following is an immediate consequence of the theory  of cohomological supports developed in \cite{AvramovBuchweitz00}: if $M,N$ are Tor-independent, i.e. $\tor_i^R(M,N)=0$ for $i>0$, then $V^*(M), V^*(N)$ are disjoint and
\[\dim V^*(M\otimes_R N)= \dim V^*(M) + \dim V^*(N) +1.\]

One of our main results in this paper  gives a geometric clarification of this formula, by proving that under the above hypothesis, the  cohomological support of $M\otimes_R N$ is actually the join of $V^*(M), V^*(N)$. Here, the join of two closed subschemes is the closure of the union of all the lines joining two points, one from each subscheme. 
\begin{theorem}(Theorem \ref{supvarmain1})
Let $R$ be a local complete intersection, and nonzero $M,N\in\md(R)$.  If $\tor_{>0}(M,N)=0$, then $V^*(M\tns N)=\join(V^*(M),V^*(N))$.  

\end{theorem}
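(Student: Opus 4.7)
The plan is to read off the cohomological support of $M \otimes_R N$ directly from the action of the Eisenbud operators, via a K\"unneth-type formula for $\ext^*_R(M\otimes_R N, k)$.  Write $R = Q/(f_1, \ldots, f_c)$ with $(Q, \mm_Q)$ regular local, and let $S := k[\chi_1, \ldots, \chi_c]$ be the polynomial ring on the cohomology operators, so that $V^*(-) = \proj(S/\ann_S\ext^*_R(-, k))$.  Denote by $\tilde V^*(-) \subseteq \AAA^c_k = \spec S$ the associated affine cone.  Because the Minkowski sum of closed affine cones is exactly the cone over the projective join, it suffices to show
\[
\tilde V^*(M \otimes_R N) \;=\; \overline{\tilde V^*(M) + \tilde V^*(N)}.
\]

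For the computation, choose $R$-free resolutions $F_\bullet \to M$ and $G_\bullet \to N$; by Tor-independence, $F \otimes_R G \to M\otimes_R N$ is a free resolution.  Lift to sequences $\tilde F, \tilde G$ of free $Q$-modules satisfying $\tilde d_F^2 = \sum_i f_i \tilde t_i^F$ and $\tilde d_G^2 = \sum_i f_i \tilde t_i^G$.  A direct sign calculation on the total differential shows that $\tilde F \otimes_Q \tilde G$ is a lift of $F\otimes_R G$ whose partial Eisenbud operators are $\tilde t_i^F \otimes 1 + 1 \otimes \tilde t_i^G$.  Reducing modulo $(f_1, \ldots, f_c)$, applying $\hm_R(-, k)$, and using the natural isomorphism of complexes $\hm_R(F\otimes_R G, k) \cong \hm_R(F, k)\otimes_k \hm_R(G, k)$ (valid since each $F_i, G_j$ is finitely generated free) together with the K\"unneth formula over the field $k$, we obtain
\[
\ext^*_R(M\otimes_R N, k) \;\cong\; \ext^*_R(M, k)\otimes_k \ext^*_R(N, k)
\]
as graded $k$-vector spaces, with $S$ acting on the right-hand side by the Leibniz rule $\chi_i\cdot(x\otimes y) = \chi_i x \otimes y + x \otimes \chi_i y$; equivalently, via the ring map $\sigma^*\colon S \to S\otimes_k S$, $\chi_i\mapsto \chi_i\otimes 1 + 1\otimes \chi_i$.

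The remainder is a dictionary.  For finitely generated graded $S$-modules $A, B$, a straightforward check on annihilators gives $\supp_{S\otimes_k S}(A\otimes_k B) = \supp_S A \times \supp_S B$, so the $(S\otimes_k S)$-support of the tensor product above is $\tilde V^*(M)\times \tilde V^*(N)$.  On spectra, $\sigma^*$ corresponds to addition $\AAA^c \times \AAA^c \to \AAA^c$, $(u, v)\mapsto u+v$, so the $S$-support of $\ext^*_R(M\otimes_R N, k)$ is the closure of the image of $\tilde V^*(M)\times \tilde V^*(N)$ under addition, namely $\overline{\tilde V^*(M) + \tilde V^*(N)}$, which is the affine cone over $\join(V^*(M), V^*(N))$.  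The main obstacle lies in the chain-level computation above: one must carefully track the signs in the total differential of $\tilde F \otimes_Q \tilde G$ to establish the Leibniz decomposition of the $\tilde t_i$, and verify that it survives dualization and K\"unneth to produce the displayed $S$-action on $\ext^*_R(M\otimes_R N, k)$.  Once this is in place, the passage to the join is formal and it recovers, as the dimension of a generic join, the equality $\dim V^*(M\otimes N) = \dim V^*(M) + \dim V^*(N) + 1$ already noted from \cite{AvramovBuchweitz00}.
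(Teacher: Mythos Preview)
Your argument is correct and takes a genuinely different route from the paper's proof. The paper proceeds by induction on the codimension $c$: the case $c\le 2$ is handled directly (Lemma~\ref{supvar1.4}); the containment $\join(V^*(M),V^*(N))\subseteq V^*(M\otimes N)$ is obtained by slicing with hyperplanes and reducing to codimension $c-1$ (Proposition~\ref{supvar1.3}); and the reverse containment is proved by first constructing, for each point $p$, an explicit maximal Cohen--Macaulay module $L$ with $V^*(L)=\{p\}$ and $V^*(M\otimes L)$ equal to a line (Lemma~\ref{supvar1.2}), then leveraging this together with Bergh's realizability theorem to exclude points outside the join. Your proof bypasses all of this: the Leibniz identity $\tilde D^2=\sum f_i(\tilde t_i^F\otimes 1+1\otimes\tilde t_i^G)$ on $\tilde F\otimes_Q\tilde G$ (which, as you note, follows from a direct sign check), together with K\"unneth over $k$, identifies $\ext^*_R(M\otimes_R N,k)$ with $\ext^*_R(M,k)\otimes_k\ext^*_R(N,k)$ as an $S$-module via the comultiplication $\chi_i\mapsto\chi_i\otimes1+1\otimes\chi_i$. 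The annihilator over $S$ is then the preimage of $\ann_{S\otimes_kS}$ under $\sigma^*$, so the affine support is the closure of the Minkowski sum $\tilde V^*(M)+\tilde V^*(N)$, which is exactly the affine cone over the join.

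What each approach buys: your argument is shorter, coordinate-free, and explains structurally \emph{why} the join appears (addition on $\spec S$ is dual to the coproduct on the Hopf algebra $S$); it also yields the K\"unneth isomorphism of graded $S$-modules as a byproduct, which is stronger than the support statement. The paper's approach, while longer, is more elementary in that it never manipulates the cohomology operators at the chain level; it also produces intermediate results of independent interest, notably the explicit construction in Lemma~\ref{supvar1.2} and the hyperplane-section technique, which feed into the later corollaries. Two small points to tighten in your write-up: you should pass to $\hat R$ (and, if convenient, to the algebraic closure $\tilde k$ via Lemma~\ref{supvar2.6}) before writing $R=Q/(f_1,\dots,f_c)$; and when invoking the support identity $\supp_S P=\overline{\sigma_*(\supp_{S\otimes_kS}P)}$, note that it is the \emph{annihilator} computation $\ann_S P=(\sigma^*)^{-1}(\ann_{S\otimes_kS}P)$ that you need, which holds without any finiteness hypothesis on $P$ over $S$.
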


The condition that $\tor_{>0}(M,N)=0$ may appear restrictive, but it is quite natural in this context. Namely, for each pair of disjoint subschemes $U,V$, one can attach modules, in fact, whole categories of modules $X,Y$ such that $V^*(M)= U$ and $V^*(N)=V$ for each $M\in X, N\in Y$. Then the vanishing of all higher Tor modules is automatic as long as $\depth M+ \depth N$ is big enough, which one can guarantee by taking syzygies.

Thus, our Theorem can be viewed as a ``categorification" of the join operation. At any rate, it provides a link between the theory of cohomological support to a very classical concept of algebraic geometry. Unsurprisingly, this immediately leads to many corollaries and interesting  questions, some of them we also address in this work. 

The proof of the above result  occupies Section 3. It combines homological and geometric techniques (the preparatory materials are collected in Section 2).  In Section 4 we give some quick corollaries, including a dual version for $\ext$ (Theorem \ref{main4}). Section 5 is devoted to a study of the Tor-independent condition between two modules $M$ and $N$. Our main result here is a sufficient condition for two modules to be Tor-independent, Theorem \ref{supvarmain3}. The novelty here is that the condition involves dimension instead of depth. Motivated by this result, in Section 6 we discuss some  relationships between the Tor-independent property and the dimension of the modules. These are inspired by the classical ``homological conjectures" in commutative algebra, and certain statements we study can be viewed as  generalizations of them. Finally, Section 7 collects examples (sometimes with the help of Macaulay 2) and open problems suggested by our results.

\section{Background}

\subsection{The ring of cohomological operators}

The study of cohomological supports over complete intersection rings was initiated by Avramov and Buchweitz in \cite{AvramovBuchweitz00}.  For the entirety of this section,  $(R,\mm,k)$ will be a local complete intersection of codimension $c$ such that $\hat{R}=Q/(f_1,\dots,f_c)$ where $Q$ is a regular local ring and $\underline{f}=f_1,\dots,f_c$ a regular sequence not contained in the square of the maximal ideal of $Q$.  Let $\tilde{k}$ be the algebraic closure of $k$.   The cohomological support of a finitely generated $R$-module  $M$ is essentially the support of $\ext_{\hat{R}}(\hat{M},k)$ as a module over the polynomial ring $S=k[\chi_1,\dots,\chi_c]$.  

Let $X$ be a finitely generated ${\hat{R}}$-module.  We recall a construction from  \cite{Eisenbud80} which gives an action of $S$ on $\ext(X,k)$.  Let $(F_\bl,\dell)$ be a free resolution of $X$ over ${\hat{R}}$.  Each $F_n={\hat{R}}^{i_n}$ and we may view $\dell$ as a sequence of matrices with entries in ${\hat{R}}$.  Let $\tilde{F}_n=Q^{i_n}$ and $\tilde{\dell}$ be the lift of $\dell$ to $\tilde{F}_\bl$.  Since $\dell^2=0$, we know that $\tilde{\dell}^2$ is a sequence of matrices whose entries are in the ideal $(f_1,\dots,f_c)$.  Thus we may write
\[\tilde{\dell}^2=\sum_{i=1}^c f_i\tilde{\Phi}_i\]
where $\tilde{\Phi}_i$ is a sequence of matrices with entries in $Q$.  Set $\Phi_i=\tilde{\Phi}_i\tns {\hat{R}}$.  We may now define an action on $\bigoplus_{n=0}^\infty F_n$ by ${\hat{R}}[\chi_1,\dots,\chi_c]$ where we set $\chi_ir=\Phi_i(r)$ for every $r\in F_n$.  This induces an action of ${\hat{R}}[\chi_1,\dots,\chi_c]$ on $\ext(X,k)=\bigoplus_{i=0}^\infty \ext^i(X,k)$.  It is shown in \cite{Eisenbud80} that the operators $\chi_i$ commute  turning $\ext(X,k)$ into a graded $S$-module, where each $\chi_i$ is degree $2$.   Furthermore, Eisenbud shows that this action is independent of  our choices of $F_\bl$ and $\tilde{\Phi}_i$.   Also, $\ext(X,k)$ is actually a finitely generated $S$-module.  The ring $S$ is known as the ring of cohomological operators and has been the focus of much study including  \cite{Avramov89,AvramovBuchweitz00a,AvramovGasharovPeeva97,Eisenbud80,Mehta76}.   In fact, there are several equivalent methods for constructing the action of $S$ on $\ext(X,k)$, the first of which was given in \cite{Gulliksen74}: see \cite{AvramovSun98} for a detailed discussion. 

The following result shows that the action is actually an invariant of the ideal generated by the regular sequence.  

\begin{theorem}[{\cite[Proposition 1.7]{Eisenbud80}, cf$.$ \cite[(3.11)]{AvramovSun98}}]\label{supvar2.1}

Let $f_1,\dots,f_c$ and $f_1',\dots,f_c'$ be two regular sequences of a regular local ring $Q$ which generate the same ideal.  Write
\[f_i=\sum_{j=1}^c q_{i,j}f_j'\]
with each $q_{i,j}\in Q$.  Letting $\chi_1,\dots,\chi_c$ and $\chi_1',\dots,\chi_c'$  be the cohomological operators associated to $f_1,\dots,f_c$ and $f_1',\dots,f_c'$ respectively, we have
\[\chi_j'=\sum_{i=1}^c q_{i,j} \chi_i\]

\end{theorem}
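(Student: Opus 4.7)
The plan is to exploit the flexibility in Eisenbud's construction: both sets of cohomological operators can be extracted from the \emph{same} free resolution and the \emph{same} lift of its differential to $Q$, so the relation $f_i=\sum_j q_{i,j}f_j'$ will translate directly into a relation between the $\chi_i$ and the $\chi_j'$.

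Concretely, I would first fix a free resolution $(F_\bl,\dell)$ of $X$ over $\hat R$ together with a single lift $(\tilde F_\bl,\tilde\dell)$ to $Q$. For the sequence $f_1,\dots,f_c$, Eisenbud's procedure produces matrices $\tilde\Phi_1,\dots,\tilde\Phi_c$ over $Q$ with
\[\tilde\dell^2=\sum_{i=1}^c f_i\,\tilde\Phi_i,\]
and we set $\chi_i=\tilde\Phi_i\tns\hat R$ as usual.

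Rather than choosing fresh lifts for the primed sequence, I would substitute $f_i=\sum_j q_{i,j}f_j'$ into the identity above and reorganise to
\[\tilde\dell^2=\sum_{j=1}^c f_j'\,\Bigl(\sum_{i=1}^c q_{i,j}\,\tilde\Phi_i\Bigr).\]
This exhibits $\tilde\Phi_j':=\sum_i q_{i,j}\tilde\Phi_i$ as a legitimate choice of lift matrices for the sequence $f_1',\dots,f_c'$; tensoring with $\hat R$ then yields $\Phi_j'=\sum_i q_{i,j}\,\Phi_i$ at the chain level, and hence the desired identity $\chi_j'=\sum_i q_{i,j}\,\chi_i$ on $\ext(X,k)$ (where each $q_{i,j}\in Q$ acts through its image in $k$).

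To finish, I would invoke Eisenbud's independence-of-choices statement from \cite{Eisenbud80} to pass from these particular representatives to the canonical cohomological operators. I don't expect a real obstacle here: the whole argument is essentially one line of bilinear reorganisation once the correct setup is in place. The only point requiring care is to perform the substitution \emph{before} descending to $\hat R$, so that the entries $q_{i,j}\in Q$ appear naturally; the single-lift strategy handles this automatically, and without it one would be forced to compare two sets of operators built from different lifts, where the relation is only visible up to homotopy.
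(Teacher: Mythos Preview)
Your argument is correct and is essentially Eisenbud's original proof: use one lift $\tilde\dell$, substitute $f_i=\sum_j q_{i,j}f_j'$ into $\tilde\dell^{\,2}=\sum_i f_i\tilde\Phi_i$, regroup, and then appeal to independence of choices. Note, however, that the paper does not supply its own proof of this statement; it is simply quoted from \cite[Proposition~1.7]{Eisenbud80} (cf.\ \cite[(3.11)]{AvramovSun98}), so there is nothing further to compare against.
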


Thus the matrix $(q_{i,j})$ acts as a change of basis matrix, changing the coordinates of $\PPP^c_{\tilde{k}}$.  When $k=\tilde{k}$,  any change of coordinates of $\PPP_{\tilde{k}}^{c-1}$ corresponds to choosing a different regular sequence which generates the ideal $(f_1,\dots,f_c)$.  This important fact is critical to several proofs in this document, thus we state it precisely.

\begin{prop}\label{supvar2.8}

Assume that $k$ is algebraically closed and set $I=(f_1,\dots,f_c)$ where $f_1,\dots,f_c$ is a regular sequence of a regular local ring $Q$.  Let $\ph:\PPP^{c-1}_k\to \PPP^{c-1}_k$ be an automorphism, i.e. a change of coordinates.  Then there exists a regular sequence $f_1',\dots,f_c'$ generating $I$ such that $\ph_*(\chi_i)=\chi_i'$ where $\chi_1,\dots,\chi_c$ and $\chi_1',\dots,\chi_c'$  are the cohomological operators associated to $f_1,\dots,f_c$ and $f_1',\dots,f_c'$ respectively.

\end{prop}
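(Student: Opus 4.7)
The plan is to translate $\varphi$ into a linear change of coordinates on the generators $\chi_1, \dots, \chi_c$ of $S$, lift this change from $k$ to the regular local ring $Q$, and then apply Theorem \ref{supvar2.1} in reverse to produce the desired regular sequence. The main obstacle I expect to face is verifying that the resulting sequence in $Q$ is still a regular sequence generating $I$ whose generators lie outside $\mm_Q^2$, since Theorem \ref{supvar2.1} only applies once one already has such a sequence on the other side.

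First I would observe that, since $S = k[\chi_1,\dots,\chi_c]$ is generated as a graded $k$-algebra by its degree-$2$ piece $k\chi_1 + \cdots + k\chi_c$, the automorphism $\varphi$ of $\PPP^{c-1}_k = \proj S$ is induced by an invertible matrix $A = (a_{ij}) \in GL_c(k)$ via $\varphi_*(\chi_j) = \sum_{i=1}^c a_{ij}\chi_i$. Because $Q$ is local with residue field $k$, I can lift each entry $a_{ij}$ to an element $\tilde a_{ij} \in Q$; the determinant of $\tilde A := (\tilde a_{ij})$ reduces to $\det(A) \in k^\times$, so it is a unit of $Q$, and hence $\tilde A \in GL_c(Q)$. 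I then define
\[
f'_j := \sum_{i=1}^c (\tilde A^{-1})_{ji}\, f_i,
\]
so that by construction $f_i = \sum_{j=1}^c \tilde a_{ij}\, f'_j$ for every $i$.

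To finish, I need to verify that $f'_1,\dots,f'_c$ is a regular sequence generating $I$. Generation of $I$ is immediate from the invertibility of $\tilde A^{-1}$ over $Q$. Reducing modulo $\mm_Q^2$, the classes $\bar{f}_1,\dots,\bar{f}_c$ are linearly independent in $\mm_Q/\mm_Q^2$ (since $f_1,\dots,f_c$ are a minimal generating set for $I$ lying outside $\mm_Q^2$), and $\tilde A^{-1}$ reduces to the invertible matrix $A^{-1}$ over $k$, so each $f'_j$ also lies outside $\mm_Q^2$. Since $I$ has height $c$ in the regular (hence Cohen--Macaulay) local ring $Q$, any set of $c$ generators of $I$ is automatically a regular sequence. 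Finally, Theorem \ref{supvar2.1} applied with $q_{ij} = \tilde a_{ij}$ yields
\[
\chi'_j = \sum_{i=1}^c \tilde a_{ij}\,\chi_i = \sum_{i=1}^c a_{ij}\,\chi_i = \varphi_*(\chi_j),
\]
which is the desired identity.
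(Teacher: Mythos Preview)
Your proof is correct and follows essentially the same approach as the paper: lift the automorphism to an invertible matrix over $Q$, define the new generators $f_j'$ via the inverse matrix so that $f_i = \sum_j \tilde a_{ij} f_j'$, check that the resulting sequence generates $I$ and is regular, and then invoke Theorem~\ref{supvar2.1}. Your justification for regularity (height $c$ in a Cohen--Macaulay ring) is a bit more explicit than the paper's, and your verification that $f_j' \notin \mm_Q^2$ is an extra step the paper does not include, but neither of these is a substantive departure.
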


\begin{proof}

Set $\psi=\ph^{-1}$, and let $\tilde{\ph}$ and $\tilde{\psi}$ be the lifts of $\ph$ and $\psi$ in $Q$ such that $\tilde{\phi}=\tilde{\ph}^{-1}$.  We can regard $\tilde{\ph}$ and and $\tilde{\psi}$ as a matrices whose entries are $q_{i,j}\in Q$ and $p_{i,j}\in Q$ respectively.  Set 
\[f_i'=\sum_{i=1}^cp_{i,j} f_j.\]
By Nakayama's lemma, $f_1',\dots,f_c'$ generates $I$, and since there are $c$ elements, $f_1',\dots,f_c'$ is necessarily a regular sequence.    However since $\tilde{\ph}\tilde{\psi}$ is the identity, we also have
\[f_i=\sum_{j=1}^c q_{i,j}f_j'.\]
It follows from Theorem \ref{supvar2.1} that
\[\chi_j'=\sum_{i=1}^c q_{i,j} \chi_i=\ph_*(\chi_j.)\]

\end{proof}

\subsection{Cohomological supports}
With the machinery  of the cohomological operators in place, we may now discuss cohomological supports.  We define 
\[V(Q,\underline{f};X)=\{\bar{a}\in \PPP_{\tilde{k}}^c\mid g(\bar{a})=0 \quad\forall g\in\ann_S\ext(X,k)\}\]
where $\tilde{k}$ is the algebraic closure of $k$.  

\begin{definition}
Let $R$ be a complete intersection ring.    Following \cite{AvramovBuchweitz00}, for a finitely generated $R$-module $M$, the {\it cohomological support}, denoted $V^*(M)$, is the projectivization in $\PPP_{\tilde{k}}^{c-1}$ of the cone $V(Q,\underline{f};\hat{M})$.  Occasionally, $V^*_R(M)$ will be used to indicate which ring is used to compute the cohomological support.  
\end{definition}

\begin{prop}[{\cite[Theorem 5.3]{AvramovBuchweitz00}}]

For any finitely generated $R$-module $M$, $V^*(M)$ is independent of the choices of $Q$, and $\underline{f}$ up to a change of coordinates.

\end{prop}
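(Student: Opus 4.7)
The plan is to split the proof into two independent steps: first show that for a fixed regular local ring $Q$, different choices of the regular sequence generating the same ideal produce cohomological supports that agree up to a linear change of coordinates on $\PPP^{c-1}_{\tilde k}$; then show that any two valid choices of $Q$ are canonically identified by Cohen's structure theorem, reducing to the first step.

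For the first step, suppose $\underline{f}=f_1,\dots,f_c$ and $\underline{f}'=f_1',\dots,f_c'$ are two regular sequences in $Q$ generating the same ideal $I$, with neither sequence lying in $\mm_Q^2$. The condition $f_i,f_i'\notin\mm_Q^2$ forces each sequence to be a \emph{minimal} generating set of $I$ (since $\dim_k(I+\mm_Q^2)/\mm_Q^2=c$ in this case). Writing $f_i=\sum_j q_{i,j}f_j'$, Nakayama's lemma applied to the transition between two minimal generating sets forces the matrix $(q_{i,j})$ to reduce modulo $\mm_Q$ to an element of $GL_c(k)$. By Theorem \ref{supvar2.1}, the cohomological operators satisfy $\chi_j'=\sum_i q_{i,j}\chi_i$, so the induced map on the polynomial ring $S=k[\chi_1,\dots,\chi_c]$ is a linear change of variables. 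Taking the vanishing locus of $\ann_S\ext_{\hat R}(\hat M,k)$ and passing to $\PPP^{c-1}_{\tilde k}$, we conclude that $V^*_{(Q,\underline{f})}(M)$ and $V^*_{(Q,\underline{f}')}(M)$ coincide after an automorphism of $\PPP^{c-1}_{\tilde k}$.

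For the second step, suppose $\hat R=Q/(\underline{f})=Q'/(\underline{f}')$ with $Q,Q'$ regular local and both sequences of length $c$ avoiding the respective squared maximal ideals. Then $\dim Q=\dim\hat R+c=\dim Q'$; both give minimal Cohen presentations of $\hat R$. By Cohen's structure theorem, there is an isomorphism $\varphi\colon Q\xrightarrow{\sim} Q'$ compatible with the surjections onto $\hat R$, so in particular $\varphi(\underline{f}\cdot Q)=\underline{f}'\cdot Q'$. The construction of the cohomological operators on a free resolution of $\hat M$ is natural with respect to such an isomorphism: a lift of the differential to $Q$ is transported by $\varphi$ to a lift to $Q'$, and the identity $\tilde\dell^2=\sum f_i\tilde\Phi_i$ is carried to $\tilde\dell^2=\sum \varphi(f_i)(\varphi\otimes\id)(\tilde\Phi_i)$. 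Hence the operators attached to $(Q,\underline{f})$ and $(Q',\varphi(\underline{f}))$ agree on the nose, while $\varphi(\underline{f})$ and $\underline{f}'$ are two regular sequences in $Q'$ generating the same ideal, to which Step 1 applies.

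I expect the main obstacle to be not the algebra of Step 1 (which is essentially handed to us by Theorem \ref{supvar2.1}) but the careful verification in Step 2 that the cohomological operators are genuinely natural under the isomorphism $\varphi$; this is a functoriality check that must be reconciled with Eisenbud's proof that the operators are independent of the auxiliary choices, so that moving between $Q$ and $Q'$ along $\varphi$ is a permissible operation. Once this naturality is established, the invoked Cohen structure theorem closes the argument.
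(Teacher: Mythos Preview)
The paper does not supply a proof of this proposition; it is quoted from \cite{AvramovBuchweitz00} without argument, so there is nothing in the paper to compare against. Your two-step outline is a reasonable reconstruction of how such a statement is proved, and the overall logic is sound, but two details deserve correction.

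In Step~1, your appeal to the hypothesis ``$f_i\notin\mm_Q^2$'' is misplaced. First, the paper's phrase ``not contained in the square of the maximal ideal'' is almost certainly a slip for ``contained in'': a minimal Cohen presentation of a singular complete intersection has $I\subset\mm_Q^2$, not the reverse. Second, even granting your reading, knowing that each $f_i$ has nonzero image in $\mm_Q/\mm_Q^2$ does not by itself give $\dim_k(I+\mm_Q^2)/\mm_Q^2=c$; you would need linear independence of those images. Fortunately none of this is needed: a regular sequence of length $c$ generating an ideal $I$ is automatically a minimal generating set, since Krull's height theorem gives $\mu(I)\ge\h I=c$ while $\mu(I)\le c$ by construction. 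So the transition matrix is invertible modulo $\mm_Q$ and your invocation of Theorem~\ref{supvar2.1} goes through.

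In Step~2, the existence of an isomorphism $\varphi\colon Q\to Q'$ over $\hat R$ is not ``Cohen's structure theorem'' --- that theorem asserts the existence of a Cohen presentation, not its uniqueness. What you need is that any two \emph{minimal} Cohen presentations of a complete local ring are isomorphic over that ring. This is true: use formal smoothness of the regular ring $Q$ to lift the identity on $\hat R$ along the surjection $Q'\to\hat R$ to a local homomorphism $Q\to Q'$, then observe that minimality on both sides forces the induced map on cotangent spaces $\mm_Q/\mm_Q^2\to\mm_{Q'}/\mm_{Q'}^2$ to be the identity on $\mm_{\hat R}/\mm_{\hat R}^2$, whence $Q\to Q'$ is an isomorphism. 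You have correctly identified that minimality is automatic here (both $Q,Q'$ have dimension $\dim\hat R+c=\text{embdim}\,\hat R$, which forces $I\subset\mm_Q^2$), so this is a gap only in attribution, not in substance. Your naturality check for the operators under $\varphi$ is then routine, exactly as you describe.
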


\begin{remark}
What we call the cohomological support is referred to  as the support variety in \cite{AvramovBuchweitz00} and other works.  In \cite{AvramovIyengar07}, the terminology cohomological support and cohomological variety are both used.  Since geometers generally require varieties to be irreducible closed subsets and since $V^*(M)$ is generally not irreducible, we have decided to use the term cohomological support.  
\end{remark}
 
\begin{remark}
In \cite{AvramovBuchweitz00} and in other works, the authors consider $V^*(M)$ as a cone in $\AAA_{\tilde{k}}^c$.  To facilitate the statement of certain results, we have found it easiest to work in projective space.   
\end{remark}

The following is a combination of the results \cite[Theorem 5.6,Theorem 6.1]{AvramovBuchweitz00}.  

\begin{theorem}\label{supvar2.9}

For  finitely generated $R$-modules $M$ and $N$, the following are equivalent.
\begin{enumerate}
\item $V^*(M)\cap V^*(N)=\emptyset$
\item $\tor_{\gg 0}(M,N)=0$
\item $\ext^{\gg 0}(M,N)=0$
\item $\ext^{\gg 0}(N,M)=0$
\end{enumerate}

\end{theorem}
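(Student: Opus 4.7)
The plan is to establish that, as closed subschemes of $\PPP^{c-1}_{\tilde k}$,
\[
V^*(M)\cap V^*(N) \;=\; \supp_S \ext^*_R(M,N) \;=\; \supp_S \ext^*_R(N,M) \;=\; \supp_S \tor^R_*(M,N),
\]
where each of the three modules on the right is equipped with the natural action of the cohomological operators $\chi_1,\dots,\chi_c$ and is finitely generated over $S=k[\chi_1,\dots,\chi_c]$. Granted this chain of equalities, the equivalence of (1)--(4) is formal: a finitely generated graded $S$-module vanishes in all sufficiently high degrees iff it is of finite length iff its projective support is empty, and each of (2), (3), (4) is precisely the statement that the corresponding $S$-module is eventually zero.

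As a preliminary step I would pass to the case where $R$ is complete with algebraically closed residue field. Completion is faithfully flat and preserves $\ext$, $\tor$, and by definition the cohomological supports; a further flat base change realizing $\tilde k$ as the residue field of a regular local extension of $Q$ keeps $\underline f$ regular and leaves all four conditions unchanged.

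The main obstacle is the chain of equalities above, and here my strategy is to reduce to a single cohomological operator. Fix a closed point $\bar a=[a_1:\cdots:a_c]\in\PPP^{c-1}_{\tilde k}$. By Proposition \ref{supvar2.8}, I may change coordinates so that $\bar a=[1:0:\cdots:0]$; this corresponds to choosing a new regular generating sequence $f'_1,\dots,f'_c$ of $(\underline f)$ whose first operator $\chi'_1$ represents the direction $\bar a$. Then $\bar a\notin V^*(M)$ is equivalent to $\chi'_1$ acting as a nonzerodivisor on $\ext^*_R(M,k)$ in high degrees, which by Eisenbud's theory translates to a sufficiently high syzygy of $M$ lifting to a module of finite projective dimension over the intermediate complete intersection $R_1=Q/(f'_2,\dots,f'_c)$. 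The computation I would then carry out expresses the $\chi'_1$-localizations of $\ext^*_R(M,N)$, $\ext^*_R(N,M)$, and $\tor^R_*(M,N)$ via a change-of-rings spectral sequence over $R_1\to R$, and shows that if either $\bar a\notin V^*(M)$ or $\bar a\notin V^*(N)$ then the localization vanishes, while if $\bar a\in V^*(M)\cap V^*(N)$ a nonzero class survives, witnessed by periodic matrix-factorization summands of the minimal $R$-free resolutions over the hypersurface $Q/(f'_1)$ pushed back along $R_1\to R$.

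With the support identification in place, each of (2), (3), (4) translates, via the first paragraph, into the statement that $V^*(M)\cap V^*(N)$ is empty, which is (1). I expect the spectral-sequence collapse in the key support computation to be the most delicate point, since one must control both the $\chi'_1$-action coming from $M$ and the one coming from $N$, and verify that they yield the same $S$-module structure up to finite-length discrepancy.
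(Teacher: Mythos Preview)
The paper does not give its own proof of this theorem: it is quoted verbatim as a combination of \cite[Theorem~5.6, Theorem~6.1]{AvramovBuchweitz00}, with no argument supplied. So there is nothing in the paper to compare your proposal against beyond the citation itself.

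Your outline is essentially the Avramov--Buchweitz strategy: establish that the projective support over $S=k[\chi_1,\dots,\chi_c]$ of each of $\ext^*_R(M,N)$, $\ext^*_R(N,M)$, $\tor^R_*(M,N)$ equals $V^*(M)\cap V^*(N)$, and then use finite generation over $S$ (Gulliksen) to translate eventual vanishing into emptiness of the support. That is the correct architecture, and your reduction to a complete ring with algebraically closed residue field is the right opening move.

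Two imprecisions are worth flagging. First, your translation of ``$\bar a\notin V^*(M)$'' is slightly off: after changing coordinates so that $\bar a=[1:0:\cdots:0]$, Theorem~\ref{supvar2.2} says this is equivalent to $\pd_{Q/(f'_1)}\hat M<\infty$, i.e.\ finiteness over the \emph{hypersurface} $Q/(f'_1)$, not over the intermediate complete intersection $R_1=Q/(f'_2,\dots,f'_c)$ you name; the role of $R_1$ is rather that $R=R_1/(f'_1)$ isolates the single operator $\chi'_1$ via the change-of-rings long exact sequence. Second, the phrase ``$\chi'_1$ acting as a nonzerodivisor in high degrees'' is not the condition you want for $\bar a\notin\supp$; rather, you need that some homogeneous element outside $(\chi'_2,\dots,\chi'_c)$ annihilates the module, which amounts to $\chi'_1$ acting \emph{nilpotently} modulo the other operators (equivalently, the $\chi'_1$-localization vanishes). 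These are fixable details; the skeleton of the argument is sound and matches the source the paper cites.
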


Hence cohomological supports encode homological information about a module. The following result gives another description of cohomological supports.  

\begin{theorem}[{\cite[Theorem 5.2]{AvramovBuchweitz00},\cite[Corollary 3.11]{Avramov89}}]\label{supvar2.2}

Suppose that the residue field $k$ is algebraically closed.  For any module $M\in\md(R)$, we have
\[V^*(M)=\{(a_1,\dots,a_c)\in\PPP^{c-1}_k\mid \pd_{Q/(\tilde{a}_1f_1+\cdots+\tilde{a}_cf_c)} \hat{M}=\infty\}\]
where $\tilde{a}_i$ is a lift in $Q$ of $a_i$.  

\end{theorem}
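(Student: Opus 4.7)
My plan is to reduce, via Proposition \ref{supvar2.8}, to the case $a=e_1=[1:0:\cdots:0]$, at which point membership in $V^*(M)$ becomes a transparent support condition on the $S$-module $\ext_{\hat R}(\hat M,k)$, and then to compare that condition with the projective dimension of $\hat M$ over the hypersurface $Q/(f_1)$ via a change-of-rings argument.

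For the reduction: given a nonzero $a=(a_1,\dots,a_c)$ with lifts $\tilde a_i\in Q$, the row vector $(a_1,\dots,a_c)\in k^c$ is nonzero and so can be extended to a basis of $k^c$, which by Nakayama lifts to an invertible $c\times c$ matrix $\tilde\psi$ over $Q$ whose first row is exactly $(\tilde a_1,\dots,\tilde a_c)$. Let $\ph$ be the automorphism of $\PPP^{c-1}_k$ induced by $\tilde\psi^{-1}$. By Proposition \ref{supvar2.8} the resulting new regular sequence $f_1',\dots,f_c'$ with $f_i'=\sum_jp_{i,j}f_j$ satisfies $f_1'=\tilde a_1f_1+\cdots+\tilde a_cf_c=f_a$, and the new cohomological operators satisfy $\chi_j'=\ph_*(\chi_j)$. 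Since cohomological supports transform covariantly under $\ph$, this reduces the theorem to the statement $e_1\in V^*(M)\iff \pd_{Q/(f_1)}\hat M=\infty$ computed with the new data.

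The point $e_1$ has homogeneous prime ideal $\p=(\chi_2,\dots,\chi_c)$ in $S=k[\chi_1,\dots,\chi_c]$, so $e_1\in V^*(M)$ iff $\ann_S\ext_{\hat R}(\hat M,k)\sbe\p$, which, since $\ext_{\hat R}(\hat M,k)$ is a finitely generated $S$-module, is equivalent to $\ext_{\hat R}(\hat M,k)_\p\ne 0$. Set $\bar Q:=Q/(f_1)$; then $\hat R=\bar Q/(f_2,\dots,f_c)$ with $f_2,\dots,f_c$ a regular sequence in $\bar Q$. I would next identify $\chi_2,\dots,\chi_c$ with the Eisenbud-type cohomological operators associated to this presentation (the construction of \cite{Eisenbud80} requires only a lift of the $\hat R$-resolution of $\hat M$ to $\bar Q$, obtained by reducing the $Q$-lift $\tilde F_\bl$ modulo $f_1$). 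The change-of-rings spectral sequence for $\hat R$ over $\bar Q$ would then yield the crucial bridge: $\ext_{\hat R}(\hat M,k)_\p\ne 0$ iff $\ext_{\bar Q}(\hat M,k)$ is infinite-dimensional over $k$ iff $\pd_{\bar Q}\hat M=\infty$.

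The heart of the argument, and the main obstacle, is that last chain of equivalences. To carry it out I would proceed by induction on $c$: the base case $c=1$ is immediate because $\bar Q=\hat R$ and $\ext_{\hat R}(\hat M,k)$ is a finitely generated $k[\chi_1]$-module that is infinite-dimensional over $k$ precisely when $\pd_{\hat R}\hat M=\infty$. The inductive step invokes the Gulliksen change-of-rings machinery to show that the operators $\chi_2,\dots,\chi_c$ govern how $\ext_{\bar Q}(\hat M,k)$ assembles from $\ext_{\hat R}(\hat M,k)$ through the Koszul complex on $f_2,\dots,f_c$, so the finite-dimensionality of $\ext_{\bar Q}$ forces the $S$-support of $\ext_{\hat R}$ to avoid $\p$. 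Making this bookkeeping precise is the technical crux, and in the end one recovers the proofs of \cite[Theorem 5.2]{AvramovBuchweitz00} and \cite[Corollary 3.11]{Avramov89}.
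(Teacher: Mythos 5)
This statement is quoted in the paper as a known result, with citations to \cite[Theorem 5.2]{AvramovBuchweitz00} and \cite[Corollary 3.11]{Avramov89}; the paper itself gives no proof, so there is nothing internal to compare your argument against. Judged on its own, your outline reproduces the standard proof from those sources: the reduction via Proposition \ref{supvar2.8} to $a=e_1$ (your matrix with first row $(\tilde a_1,\dots,\tilde a_c)$ does give $f_1'=f_a$), the translation of $e_1\in V^*(M)$ into $\ann_S\ext_{\hat R}(\hat M,k)\sbe(\chi_2,\dots,\chi_c)$ and hence, by finite generation over $S$, into nonvanishing of the localization, and the identification of $\chi_2,\dots,\chi_c$ with the operators of the presentation $\hat R=\bar Q/(f_2,\dots,f_c)$ are all correct and are genuine points that need checking.

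The gap is exactly where you say the crux is, and it is larger than "bookkeeping." The equivalence between $\pd_{\bar Q}\hat M<\infty$ and the condition that $\ext_{\hat R}(\hat M,k)$ has support avoiding $\p=(\chi_2,\dots,\chi_c)$ --- equivalently, is finitely generated over $k[\chi_2,\dots,\chi_c]$ --- \emph{is} the content of the cited theorems of Gulliksen and Avramov; it is not delivered by a routine change-of-rings spectral sequence. Your base case $c=1$ is fine, but the inductive step as described ("finite-dimensionality of $\ext_{\bar Q}$ forces the $S$-support of $\ext_{\hat R}$ to avoid $\p$") asserts both directions of Gulliksen's change-of-rings theorem without an argument; the forward direction needs Gulliksen's finiteness theorem, and the converse needs the long exact sequence $\cdots\to\ext^n_{\bar Q}(\hat M,k)\to\ext^n_{\hat R}(\hat M,k)\xto{\chi}\ext^{n+2}_{\hat R}(\hat M,k)\to\ext^{n+1}_{\bar Q}(\hat M,k)\to\cdots$ for a single hypersurface, iterated and combined with a graded Nakayama argument. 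So the proposal is a correct and well-organized reduction of the theorem to the cited results, but not a self-contained proof of them.
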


From this result and Lemma \ref{supvar2.6}, we can easily deduce these corollaries.

\begin{corollary}\label{supvar2.3}

For a finitely generated $R$-module $M$, $V^*(M)=\emptyset$ if and only if $\pd M<\infty$.  Also $V^*(k)=\PPP^{c-1}_{\tilde{k}}$.  

\end{corollary}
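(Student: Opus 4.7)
My plan is to apply Theorem \ref{supvar2.2} after reducing via Lemma \ref{supvar2.6} to the case where the residue field is algebraically closed, and to combine this with the finite generation of $\ext(\hat{M}, k)$ over the cohomological operator ring $S$. Throughout, assume $k=\tilde k$.

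I handle the second assertion $V^*(k) = \PPP^{c-1}_{\tilde k}$ first. By Theorem \ref{supvar2.2}, it suffices to show that $\pd_{Q/g} k = \infty$ for every nonzero $g = \tilde a_1 f_1 + \cdots + \tilde a_c f_c$. By the hypotheses on $\underline f$, any such $g$ lies in $\mm_Q^2$, so $Q/g$ has embedding dimension equal to $\dim Q$ but strictly smaller Krull dimension; in particular $Q/g$ is non-regular, and Auslander--Buchsbaum--Serre gives $\pd_{Q/g} k = \infty$.

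For the equivalence $V^*(M) = \emptyset$ iff $\pd_R M < \infty$, the $(\Leftarrow)$ direction is a splicing argument. Given $\pd_R M < \infty$ and any $\bar a \in \PPP^{c-1}_{\tilde k}$, use Proposition \ref{supvar2.8} to change regular sequences so that $g = \sum \tilde a_i f_i$ is the leading element of a new regular sequence $g, g_2, \dots, g_c$ generating $(f_1, \dots, f_c)$. Then $\hat R = (Q/g)/(g_2, \dots, g_c)$ with $g_2, \dots, g_c$ regular on $Q/g$, so the Koszul complex gives $\pd_{Q/g} \hat R \leq c-1$. Splicing this with a finite free $\hat R$-resolution of $\hat M$ yields $\pd_{Q/g} \hat M < \infty$, so $\bar a \notin V^*(M)$ by Theorem \ref{supvar2.2}.

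The $(\Rightarrow)$ direction carries the main subtlety. Rather than trying to extract finite pd over $\hat R$ pointwise from Theorem \ref{supvar2.2} (which would only yield finite pd over the hypersurface slices $Q/g$), I work directly from the definition: the hypothesis $V^*(M) = \emptyset$ says that $\ann_S \ext_R(M,k)$ has empty projective vanishing locus in $\PPP^{c-1}_{\tilde k}$, so Hilbert's Nullstellensatz forces this annihilator to contain a power of the irrelevant ideal $S_+ = (\chi_1, \dots, \chi_c)$. Because $\ext_R(M,k)$ is finitely generated as a graded $S$-module (the key input recalled in Section 2), a power of $S_+$ annihilating it forces $\ext^i_R(M,k) = 0$ for all $i \gg 0$, i.e., $\pd_R M < \infty$. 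The main obstacle is precisely this globalization step, and it is handled cleanly by the Nullstellensatz-plus-finite-generation route built into the $S$-module structure on $\ext$, rather than by fighting with Theorem \ref{supvar2.2} at each point separately.
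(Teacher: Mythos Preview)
Your proof is correct. The paper gives no explicit argument, simply stating that the corollary follows from Theorem~\ref{supvar2.2} and Lemma~\ref{supvar2.6}; your write-up is a faithful fleshing-out of that claim. The one place where you add genuine content beyond a bare citation is the $(\Rightarrow)$ direction: rather than trying to pass from finite projective dimension over every hypersurface slice $Q/g$ (which is all Theorem~\ref{supvar2.2} delivers pointwise) back to finite projective dimension over $\hat R$, you return to the definition of $V^*(M)$ as the projective zero locus of $\ann_S \ext(\hat M,k)$ and invoke the Nullstellensatz together with the finite generation of $\ext(\hat M,k)$ over $S$. This is clean and is exactly the sort of globalization step the paper leaves implicit.

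One textual caveat: you assert that each $g=\sum \tilde a_i f_i$ lies in $\mm_Q^2$ ``by the hypotheses on $\underline f$,'' whereas the paper's Section~2.1 literally says the regular sequence is \emph{not} contained in $\mm_Q^2$. This is almost certainly a typo in the paper---the standard minimal Cohen presentation has $f_i\in\mm_Q^2$, and without it $Q/f_1$ would be regular, giving $(1,0,\dots,0)\notin V^*(k)$ and contradicting the very statement being proved. Your argument uses the correct convention.
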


\begin{corollary}\label{supvar2.12}

Let  $f_1,\dots,f_c$ be a regular sequence of  a regular local ring $Q$, and let $k[\chi_1,\dots,\chi_c]$ be the ring of cohomological operators for $Q/(f_1,\dots,f_c)$.  Take $n$ such that $1\le n\le c$.  Let $H$ be the linear space defined by $\chi_{n+1}=\cdots =\chi_c=0$.    For any module $M$ over $Q/(f_1,\dots,f_c)$,  
\[V^*_{Q/(f_1,\dots,f_n)}(M)=V^*_{Q/(f_1,\dots,f_c)}(M)\cap H.\]

\end{corollary}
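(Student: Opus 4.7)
The plan is to apply Theorem \ref{supvar2.2} directly to both sides of the claimed equality, observing that the same hypersurface condition on $\hat M$ characterizes each. First I would reduce to the case when $k=\tilde k$ is algebraically closed; both $V^*_R(M)$ and $V^*_S(M)$ (where I write $S := Q/(f_1,\dots,f_n)$ and $R := Q/(f_1,\dots,f_c)$) are defined after base change to $\tilde k$, and the linear subspace $H$ is defined by linear equations compatible with this extension, so the claim descends. Next, note that $f_1,\dots,f_n$ is a regular sequence in $Q$ not contained in the square of the maximal ideal (being a subsequence of one with this property), so $S$ is itself a local complete intersection of codimension $n$. Since $R$ is a further quotient of $S$, the module $M$ is naturally a finitely generated $S$-module, and its completion $\hat M$ (with respect to the maximal ideal of $Q$) is unambiguous as a module over $\hat S$ or $\hat R$.

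Now apply Theorem \ref{supvar2.2} to $R$: a point $(a_1,\dots,a_c)\in\PPP^{c-1}_k$ belongs to $V^*_R(M)$ if and only if $\pd_{Q/(\tilde a_1 f_1+\cdots+\tilde a_c f_c)}\hat M=\infty$. Under the identification $H=\{a_{n+1}=\cdots=a_c=0\}$, the intersection $V^*_R(M)\cap H$ consists of those tuples $(a_1,\dots,a_n,0,\dots,0)$ for which
\[ \pd_{Q/(\tilde a_1 f_1+\cdots+\tilde a_n f_n)}\hat M = \infty. \]
On the other hand, applying Theorem \ref{supvar2.2} to the codimension-$n$ complete intersection $S$ gives
\[ V^*_S(M) = \{(a_1,\dots,a_n)\in\PPP^{n-1}_k \mid \pd_{Q/(\tilde a_1 f_1+\cdots+\tilde a_n f_n)}\hat M = \infty\}. \]
These two descriptions are visibly the same set, matched by the natural identification $\PPP^{n-1}_k\cong H$.

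The only mild obstacle is bookkeeping: one must confirm that the natural identification of $H$ with $\PPP^{n-1}_k$ is precisely the one sending $(a_1,\dots,a_n)$ to $(a_1,\dots,a_n,0,\dots,0)$, which is exactly the coordinate choice forced by labeling the operators $\chi_1,\dots,\chi_n$ relative to the same initial $f_1,\dots,f_n$. Once this compatibility is recorded, the equality is immediate from Theorem \ref{supvar2.2}; no further homological input is needed.
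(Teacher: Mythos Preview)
Your proposal is correct and follows essentially the same approach as the paper, which simply records Corollary \ref{supvar2.12} as an immediate consequence of Theorem \ref{supvar2.2} together with the reduction to an algebraically closed residue field (Lemma \ref{supvar2.6}). Your write-up just makes explicit the bookkeeping the paper leaves to the reader.
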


\begin{corollary}\label{supvar2.14}

Suppose $M\in\md(R)$ and $x\in R$ is regular on $M$.  Then $V^*(M)=V^*(M/xM)$.

\end{corollary}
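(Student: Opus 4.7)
The plan is to verify the equality pointwise via Theorem \ref{supvar2.2}. After base-changing so that $k$ is algebraically closed (which does not alter $V^*$, since the cohomological support is already defined over $\tilde{k}$), it suffices to check, for every $\bar{a} = (a_1, \dots, a_c) \in \PPP^{c-1}_k$ and the corresponding hypersurface $R_{\bar{a}} := Q/(\tilde{a}_1 f_1 + \cdots + \tilde{a}_c f_c)$, that
$$\pd_{R_{\bar{a}}} \hat{M} = \infty \iff \pd_{R_{\bar{a}}} \widehat{M/xM} = \infty.$$

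First I would note that $x$ remains $\hat{M}$-regular (by flatness of $R\to\hat R$) and $\widehat{M/xM} \cong \hat{M}/x\hat{M}$ (by exactness of completion), so there is a short exact sequence of $R_{\bar{a}}$-modules
$$0 \longrightarrow \hat{M} \xrightarrow{\;x\;} \hat{M} \longrightarrow \hat{M}/x\hat{M} \longrightarrow 0.$$
Applying $\hm_{R_{\bar{a}}}(-, k)$ produces a long exact sequence in which the middle maps are the induced multiplication by $x$ on $\ext^i_{R_{\bar{a}}}(\hat{M}, k)$, and these vanish because $x \in \mm$ annihilates $k$. The long exact sequence therefore breaks into short exact sequences
$$0 \longrightarrow \ext^i_{R_{\bar{a}}}(\hat{M}, k) \longrightarrow \ext^{i+1}_{R_{\bar{a}}}(\hat{M}/x\hat{M}, k) \longrightarrow \ext^{i+1}_{R_{\bar{a}}}(\hat{M}, k) \longrightarrow 0,$$
from which the outer terms vanish for all $i \gg 0$ if and only if the middle terms do. This yields the equivalence of projective dimensions above, and hence the claim.

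The argument is essentially routine once the right sequence is written down; the only mild technical point is the reduction to an algebraically closed residue field needed to invoke Theorem \ref{supvar2.2}. An alternative that avoids this step is to run the same Ext-with-$k$ splitting argument directly over $\hat{R}$: by naturality of Eisenbud's cohomological operators, the analogous short exact sequences become short exact sequences of $S$-modules, so $\ext_{\hat{R}}(\widehat{M/xM}, k)$ and $\ext_{\hat{R}}(\hat{M}, k)$ have equal support in $\spec S$, yielding $V^*(M) = V^*(M/xM)$ directly.
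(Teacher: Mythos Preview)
Your proof is correct and follows essentially the same route as the paper. Both arguments reduce to algebraically closed residue field (the paper via Lemma~\ref{supvar2.6}, which is invoked in the preamble to these corollaries) and then use Theorem~\ref{supvar2.2} to check pointwise that $\pd_{Q/a}\hat M<\infty$ iff $\pd_{Q/a}\widehat{M/xM}<\infty$; the paper simply asserts this equivalence from the regularity of (a lift of) $x$ on $M$, while you supply the explicit $\ext(-,k)$ splitting argument that justifies it.
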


\begin{proof}

Let $\tilde{x}\in Q$ be a lift of $x$.  Then $\tilde{x}$ is still regular on $M$ and so $M$ has finite projective dimension over $Q/a$ if and only if $M/\tilde{x}M=M/xM$ does too.  The result now follows from Theorem \ref{supvar2.2}.

\end{proof}

A generalization of Corollary \ref{supvar2.3} exists involving complexity.

\begin{definition}

For a sequence $(a_n)_{n\ge 0}$ of nonnegative integers, we can define the {\it complexity} 
\[\cx (a_n)_{n\ge 0}=\min\{\deg f\mid f\in \QQ[t] \quad a_n\le f(n)\quad  \forall n\gg 0\}+1.\]
For a module $M$, we set $\cx M=\cx \be_n(M)$.  

\end{definition}

A module has finite projective dimension if and only if $\cx M=0$.  Since $R$ is a complete intersection of codimension $c$, $\cx k$ equals $c$.  

\begin{prop}[{ \cite[Theorem 5.6]{AvramovBuchweitz00}}]\label{2.14}

For any $R$-module, we have $\dim V^*(M)=\cx M-1$

\end{prop}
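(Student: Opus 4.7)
The plan is to reduce the statement to the standard Hilbert-function characterization of Krull dimension for finitely generated graded modules over a polynomial ring. Set $E := \ext_{\hat{R}}(\hat{M}, k)$, viewed as a finitely generated graded $S$-module (with each $\chi_i$ in degree $2$). The first step is geometric: by the very definition of $V^*(M)$, it is the projectivization of the affine cone $V(Q,\underline{f};\hat{M}) \sbe \AAA^c_{\tilde k}$ cut out by $\ann_S E$, so
\[\dim V^*(M) \;=\; \dim V(Q,\underline{f};\hat{M}) - 1 \;=\; \dim_S E - 1,\]
where the last equality identifies the Krull dimension of the module with the dimension of its support.

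Next, I would invoke Hilbert-function theory: since $\chi_1,\dots,\chi_c$ have degree $2$, after splitting $E = E^{\mathrm{ev}} \oplus E^{\mathrm{odd}}$ into even- and odd-degree parts and regrading each $\chi_i$ to degree $1$, each summand is a finitely generated graded module over a standard polynomial ring. The Hilbert function $h_E(n) := \dim_k E_n$ is therefore, for $n \gg 0$, a quasi-polynomial of period $2$ whose nontrivial components are polynomials of degree $\dim_S E - 1$. In particular $h_E(n) = O(n^{\dim_S E - 1})$, and this bound is realized along an infinite subsequence.

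Finally, I would identify $h_E(n)$ with a Betti number: because $\ext^n_{\hat R}(\hat M, k) \cong k^{\be_n(\hat M)}$ and completion does not alter Betti numbers, $h_E(n) = \be_n(M)$. Consequently the minimal degree of a polynomial $f \in \QQ[t]$ with $\be_n(M) \le f(n)$ for $n \gg 0$ is exactly $\dim_S E - 1$, so $\cx M = \dim_S E$, and combined with the first display this yields $\dim V^*(M) = \cx M - 1$.

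The main obstacle is the degree-$2$ grading: one must verify that the polynomial growth rate of the full sequence $\be_n(M)$ is genuinely $\dim_S E - 1$, rather than a smaller value attained on only one parity. This is handled by the observation that $\dim_S E = \max\{\dim_S E^{\mathrm{ev}},\, \dim_S E^{\mathrm{odd}}\}$, so whichever parity realizes this maximum produces Betti numbers of the asserted polynomial growth, and the sequence as a whole inherits the same growth rate.
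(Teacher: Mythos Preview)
The paper does not supply its own proof of this proposition; it is simply quoted from Avramov--Buchweitz as background. Your argument is essentially the standard one that lies behind the cited result: identify $\dim V^*(M)+1$ with the Krull dimension of $E=\ext^*_{\hat R}(\hat M,k)$ as an $S$-module (using that passage to the algebraic closure $\tilde k$ does not change Krull dimension), and then invoke Hilbert-function theory to match that Krull dimension with the polynomial growth rate of $\be_n(M)$. The even/odd splitting to cope with the degree-$2$ grading of the $\chi_i$ is exactly the right maneuver, and your closing observation that $\dim_S E=\max\{\dim_S E^{\mathrm{ev}},\dim_S E^{\mathrm{odd}}\}$ correctly disposes of the only genuine subtlety. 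So your proof is sound and simply fills in what the paper chose to cite rather than reprove.
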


\begin{remark}

Note that the previous result considers $V^*(M)$ as a closed set of projective space instead of a cone in affine space.

\end{remark}

The following are useful results on cohomological supports.

\begin{theorem}[{\cite[Corollary 2.3]{Bergh07},\cite[Theorem 7.8]{AvramovIyengar07}}]\label{supvar2.11}

If $k$ is algebraically closed, for each closed set $V\sbe \PPP^{c-1}_k$, there is a maximal Cohen-Macaulay module $M$ such that $V^*(M)=V$.

\end{theorem}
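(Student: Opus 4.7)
The plan follows a three-step strategy: realize hypersurfaces via Carlson-type modules, combine them to produce an arbitrary closed subset, and pass to a high syzygy to land in the MCM category.

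First, since $V\subseteq\PPP^{c-1}_k$ is closed, write $V = V(g_1)\cap\cdots\cap V(g_m)$ as an intersection of hypersurfaces cut out by homogeneous forms $g_j\in S=k[\chi_1,\dots,\chi_c]$. The problem then splits into (i) realizing each $V(g_j)$ by some $R$-module, and (ii) assembling these realizations into one module whose cohomological support is exactly the intersection.

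For (i), the natural tool is a Carlson-type construction. A form $g\in S$ of degree $n$ may be viewed as an element of $\ext^{2n}_R(k,k)$, represented by a chain map $\Omega_R^{2n} k \to k$; let $L_g$ be a shift of the mapping cone (or equivalently a suitable syzygy of its kernel). An analysis of the $S$-action on $\ext_R(L_g,k)$ using the triangle defining $L_g$ should give $\ann_S\ext_R(L_g,k)=(g)$, and hence $V^*(L_g)=V(g)$. In the special case of a linear form, Proposition \ref{supvar2.8} allows one to change coordinates so that $g=\chi_c$, and Corollary \ref{supvar2.12} relates the computation to the smaller complete intersection $Q/(f_1,\dots,f_{c-1})$, providing a concrete handle.

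For (ii), given modules $L_{g_1},\dots,L_{g_m}$ realizing each hypersurface, I would combine them by an iterated mapping cone construction in the derived category, each step cutting down the support by one hypersurface so that the annihilator of $\ext$ of the cumulative module is $(g_1,\dots,g_j)$ at step $j$. A naive derived tensor product $L_{g_1}\otimes^{\mathbf L}_R\cdots\otimes^{\mathbf L}_R L_{g_m}$ is insufficient because the supports $V(g_j)$ typically meet, so higher Tor does not vanish; the iterative Carlson-type approach of \cite{Bergh07} bypasses this. Once a finitely generated module $N$ with $V^*(N)=V$ is in hand, take a sufficiently high syzygy $\Omega_R^d N$: this is maximal Cohen--Macaulay over the complete intersection $R$ for $d\gg 0$, and because $\ext_R(\Omega_R N,k)$ and $\ext_R(N,k)$ agree as $S$-modules up to a degree shift, $V^*(\Omega_R^d N)=V^*(N)=V$.

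The main obstacle is step (ii): verifying that each stage of the iterated construction drops the support by exactly the intended hypersurface, and not by a smaller or differently shaped subset. Controlling the $S$-action across the connecting morphisms of the triangles is delicate, and this is precisely where the arguments of \cite{Bergh07} and \cite{AvramovIyengar07} do their real work.
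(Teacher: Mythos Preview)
The paper does not prove this statement; it is quoted as a known result with citations to \cite[Corollary 2.3]{Bergh07} and \cite[Theorem 7.8]{AvramovIyengar07}, and no argument is supplied in the text. So there is no ``paper's own proof'' to compare against.

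That said, your sketch is in the spirit of those references: Bergh's argument in \cite{Bergh07} does proceed by an iterated Carlson-type construction, realizing an arbitrary closed set by successively imposing the hypersurface conditions and then passing to a high syzygy to obtain an MCM module, and Avramov--Iyengar work at the level of triangulated support theory. Your identification of the delicate point---controlling the $S$-action through the triangles so that each step cuts down the support by exactly one hypersurface---is accurate; that is indeed where the substance lies in \cite{Bergh07}. One minor correction: your remark that a derived tensor product is ``insufficient because the supports $V(g_j)$ typically meet'' slightly misdiagnoses the issue, since the point of the Carlson construction is not Tor-independence but rather to build a module whose Ext annihilator is precisely $(g_1,\dots,g_m)$, which a tensor product would not give in any case.
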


When working with cohomological supports, it is important to be able to reduce to the case where $R$ is complete and $k$ is algebraically closed.  These two results which allow us to do this.  

\begin{lemma}\label{supvar2.7}

For any $R$-module $M$, we have $V_R^*(M)=V^*_{\hat{R}}(\hat{M})$.  

\end{lemma}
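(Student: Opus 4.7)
The plan is to observe that this is essentially tautological from the definition of $V^*$, which is already set up to pass through the completion. Recall that for a finitely generated $R$-module $M$, the definition of $V^*_R(M)$ has three ingredients: a regular local ring $Q$ with a regular sequence $\underline{f}=f_1,\dots,f_c$ such that $\hat R=Q/(\underline{f})$; the $\hat R$-module $\hat M$; and the action of $S=k[\chi_1,\dots,\chi_c]$ on $\ext_{\hat R}(\hat M,k)$ produced from $(Q,\underline{f})$. One then sets $V^*_R(M)$ to be the projectivization in $\PPP^{c-1}_{\tilde k}$ of $V(Q,\underline{f};\hat M)$.

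The strategy is simply to run the same recipe for $\hat M$ viewed as an $\hat R$-module and see that we arrive at the same data. Since $\hat R$ is already complete, the completion functor is the identity on $\hat R$-modules, so $\widehat{\hat R}=\hat R$ and $\widehat{\hat M}=\hat M$. The residue field of $\hat R$ is the same $k$ as that of $R$, so the polynomial ring $S$ and its algebraic closure $\tilde k$ do not change. Moreover, the chosen presentation $\hat R=Q/(\underline{f})$ is itself a valid presentation of the completion of $\hat R$, so it may be used to compute $V^*_{\hat R}(\hat M)$. With these choices, $V^*_{\hat R}(\hat M)$ is, by definition, the projectivization of $V(Q,\underline{f};\widehat{\hat M})=V(Q,\underline{f};\hat M)$, which is exactly $V^*_R(M)$.

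The only thing that could cause trouble is the ambiguity in the choice of $(Q,\underline{f})$: if one computes $V^*_{\hat R}(\hat M)$ with a different presentation $\hat R=Q'/(\underline{f}')$, one needs to know the answer is unchanged. But this is already the content of the previously stated independence of presentation result (Proposition following the definition), which says $V^*$ is well-defined up to change of coordinates. Thus the argument reduces to unpacking the definitions with a judicious choice of presentation, and no essential obstacle arises; I expect the author's proof to be a one-line appeal to the definition.
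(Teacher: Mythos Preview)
Your proposal is correct and matches the paper's treatment: the paper states this lemma without proof, precisely because the definition of $V^*_R(M)$ is already formulated in terms of $\hat R$ and $\hat M$, so the equality is tautological once one notes $\widehat{\hat R}=\hat R$ and $\widehat{\hat M}=\hat M$. Your unpacking of the definition and invocation of independence of presentation is exactly the right justification.
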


\begin{lemma}[{ \cite[Lemma 2.2]{AvramovBuchweitz00},\cite[App., Th\'{e}or\`{e}m 1, Corollaire]{Bourbaki83})}]\label{supvar2.6}

There exists a local complete intersection ring $(\tilde{R},\tilde{\mm},\tilde{k})$ of codimension $c$ such that $\tilde{R}$ is a flat extension of $R$, $\mm\tilde{R}=\tilde{m}$, and the induced map $k\to\tilde{k}$ is the inclusion of $k$ into its algebraic closure.  Furthermore, we have $V_R^*(M)=V_{\tilde{R}}^*(M\tns \tilde{R})$.  

\end{lemma}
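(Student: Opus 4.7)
The plan is to invoke the classical \emph{gonflement} construction from Bourbaki to produce $\tilde R$, verify that the complete-intersection structure is preserved, and then transfer the cohomological operator action across a faithfully flat base change.

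First I would construct $\tilde R$. Start with a tower of local flat extensions $R = R_0 \to R_1 \to R_2 \to \cdots$, each step being a gonflement adjoining a single element algebraic over the residue field (or taking a purely transcendental polynomial extension and localizing at the extension of the maximal ideal). Each such step yields a flat local extension with $\mm R_i = \mm_{R_i}$ and with residue field obtained by the prescribed single-generator extension. Passing to the direct limit (and then completing or localizing appropriately so the result is Noetherian and local) produces $\tilde R$ with $\mm\tilde R = \tilde \mm$ and residue field $\tilde k$ equal to the algebraic closure of $k$. This is the content of the Bourbaki citation.

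Next I would verify that $\tilde R$ is a complete intersection of codimension $c$. Write $\hat R = Q/(f_1,\dots,f_c)$ with $Q$ regular and $f_1,\dots,f_c$ a regular sequence in the square of the maximal ideal. The same gonflement procedure applied to $Q$ produces a regular local ring $\tilde Q$ with $\tilde Q \to \widehat{\tilde R}$ flat after completion, and since flatness preserves regular sequences, $f_1,\dots,f_c$ remains a regular sequence in $\tilde Q$ not contained in the square of its maximal ideal. Hence $\widehat{\tilde R} = \tilde Q/(f_1,\dots,f_c)$ exhibits $\tilde R$ as a codimension $c$ complete intersection.

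Finally I would compare the two cohomological supports. By Lemma \ref{supvar2.7} both quantities are computed after completion, so replace $R$ by $\hat R$ and $\tilde R$ by $\widehat{\tilde R}$. Let $(F_\bullet,\partial)$ be a free resolution of $\hat M$ over $\hat R$ with lift $\tilde\partial$ to $Q$ and Eisenbud operators $\Phi_i$ coming from $\tilde\partial^2 = \sum f_i\tilde\Phi_i$. Tensoring with the flat extension $\hat R \to \widehat{\tilde R}$ yields a free resolution of $\hat M \otimes \widehat{\tilde R}$ whose Eisenbud operators come from the same identity $\tilde\partial^2 = \sum f_i\tilde\Phi_i$, now read in $\tilde Q$. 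Consequently the action of $S = k[\chi_1,\dots,\chi_c]$ on $\ext_{\hat R}(\hat M, k)$ extends along the coefficient inclusion $S \hookrightarrow \tilde S = \tilde k[\chi_1,\dots,\chi_c]$ to the action on $\ext_{\widehat{\tilde R}}(\hat M \otimes \widehat{\tilde R}, \tilde k)$, and by flatness the latter is just $\ext_{\hat R}(\hat M, k) \otimes_k \tilde k$. Hence $\ann_{\tilde S}\ext_{\widehat{\tilde R}}(\hat M\otimes\widehat{\tilde R},\tilde k) = (\ann_S\ext_{\hat R}(\hat M,k))\tilde S$, and the two ideals cut out exactly the same closed subset of $\PPP^{c-1}_{\tilde k}$.

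The main technical obstacle is the bookkeeping in this last step: one must verify that the gonflement of $Q$ can be chosen compatibly with the gonflement of $\hat R$ so that the same regular sequence $f_1,\dots,f_c$ and the same lifts $\tilde\Phi_i$ may be reused, and that the ring of cohomological operators is genuinely extended only in its coefficients. Once this compatibility is pinned down, the annihilator ideals transform by scalar extension and the equality of zero loci in $\PPP^{c-1}_{\tilde k}$ is formal.
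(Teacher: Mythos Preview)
The paper does not give a proof of this lemma; it is simply quoted from \cite{AvramovBuchweitz00} and \cite{Bourbaki83}. Your sketch reconstructs the argument behind those citations and is correct in outline.

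Two small remarks. First, a slip: you wrote that $f_1,\dots,f_c$ is a regular sequence \emph{in} the square of the maximal ideal, but the paper's convention (and the usual one for a minimal presentation) is that it lies \emph{outside} the square; in any case this condition plays no role in the argument. Second, the compatibility worry you flag at the end is best dissolved by reversing the order of construction: rather than gonflemening $R$ and then trying to find a compatible $\tilde Q$, first pass to $\hat R = Q/(f_1,\dots,f_c)$ and apply Bourbaki's theorem directly to $Q$. This yields a flat local extension $Q\to\tilde Q$ with $\mm_Q\tilde Q=\mm_{\tilde Q}$ and residue field $\tilde k$; then $\tilde Q$ is regular (flat local extension of a regular ring with field fibre), the sequence $f_1,\dots,f_c$ stays regular by flatness, and $\tilde R:=\tilde Q/(f_1,\dots,f_c)$ is automatically a flat local extension of $\hat R$, hence of $R$, with all the required properties. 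With this choice the lifted resolution and the identity $\tilde\partial^2=\sum f_i\tilde\Phi_i$ are obtained by literal base change along $Q\to\tilde Q$, so the Eisenbud operators on $\ext_{\tilde R}(M\otimes\tilde R,\tilde k)$ are the scalar extension of those on $\ext_{\hat R}(\hat M,k)$, and your annihilator comparison $\ann_{\tilde S}=(\ann_S)\tilde S$ follows with no further bookkeeping.
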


\subsection{Thick subcategories}
There is a deep connection between cohomological supports and the thick subcategories of $\md(R)$.  This connection begins with the following result.  
\begin{prop}[{\cite[Theorem 5.6]{AvramovBuchweitz00}}]\label{supvar2.4}

If \[0\to X_1\to X_2\to X_3\to 0\] is exact, then 
\[V^*(X_i)\sbe V^*(X_j)\cup V^*(X_l)\]
with $\{i,j,l\}=\{1,2,3\}$. In particular, $V^*(M)=V^*(\zz M)$.   Furthermore
\[V^*(X\oplus Y)=V^*(X)\cup V^*(Y)\]

\end{prop}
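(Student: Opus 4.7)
The plan is to deduce everything from the fact that cohomological supports are defined via the support of $\ext(X,k)$ as a graded module over the polynomial ring $S = k[\chi_1,\dots,\chi_c]$, so I will leverage the long exact sequence of Ext together with the $S$-module structure.

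First, I would verify that the long exact sequence in Ext associated to $0\to X_1\to X_2\to X_3\to 0$ is in fact an exact sequence of graded $S$-modules (with the connecting maps shifting the grading by one so that they become $S$-linear of degree $2$). To do this, I would lift the short exact sequence to one of free resolutions $0\to F_\bullet^1\to F_\bullet^2\to F_\bullet^3\to 0$ over $\hat{R}$, lift that further to a short exact sequence of complexes of $Q$-free modules, and compare the chosen $\tilde{\Phi}_i$ operators on the three resolutions; a diagram chase shows the $\chi_i$ commute with the comparison maps (hence induce $S$-linear maps on Ext), and commute with the connecting homomorphism up to sign. This compatibility is the main technical step, though it is essentially bookkeeping.

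Given this, the containment $V^*(X_i)\subseteq V^*(X_j)\cup V^*(X_l)$ is immediate: for a prime $\p$ of $S$ not in $\supp_S\ext(X_j,k)$ and not in $\supp_S\ext(X_l,k)$, localizing the long exact sequence at $\p$ kills the outer terms in every position, so exactness forces $\ext(X_i,k)_\p = 0$, hence $\p\notin\supp_S\ext(X_i,k)$. Projectivizing gives the desired containment in $\PPP^{c-1}_{\tilde{k}}$.

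For the two consequences: taking a syzygy exact sequence $0\to \Omega M\to F\to M\to 0$ with $F$ free, Corollary \ref{supvar2.3} gives $V^*(F)=\emptyset$, and applying the containment in both directions yields $V^*(M)\subseteq V^*(\Omega M)$ and $V^*(\Omega M)\subseteq V^*(M)$. Finally, for $V^*(X\oplus Y)$, I would avoid using the containment (which only gives one inclusion) and instead observe directly that $\ext(X\oplus Y,k)\cong \ext(X,k)\oplus \ext(Y,k)$ as graded $S$-modules, so the annihilators satisfy $\ann_S\ext(X\oplus Y,k) = \ann_S\ext(X,k)\cap\ann_S\ext(Y,k)$, which on taking vanishing loci gives $V^*(X\oplus Y)=V^*(X)\cup V^*(Y)$. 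The main obstacle is genuinely the first step, the $S$-linearity of the connecting maps, but this is standard once the $\chi_i$ are realized as chain-level operators on a compatible triple of free resolutions.
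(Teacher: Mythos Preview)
The paper does not actually provide a proof of this proposition; it is stated with a citation to \cite[Theorem~5.6]{AvramovBuchweitz00} and used without further argument. Your proposal is a correct outline of the standard proof via the $S$-module structure on $\ext(-,k)$, and is essentially how Avramov and Buchweitz prove it in the cited reference.

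One remark: given the tools already assembled in this paper, there is a shorter route available for the first assertion. By Theorem~\ref{supvar2.2} (after passing to $\hat R$ and algebraically closing $k$ via Lemmas~\ref{supvar2.7} and~\ref{supvar2.6}), a point $\bar a$ lies in $V^*(X)$ if and only if $\pd_{Q_{\bar a}} \hat X = \infty$, where $Q_{\bar a} = Q/(\tilde a_1 f_1 + \cdots + \tilde a_c f_c)$. From the long exact sequence of $\tor^{Q_{\bar a}}$ it is immediate that if two of the $X_i$ have finite projective dimension over $Q_{\bar a}$, so does the third; this gives the containment directly, bypassing the verification of $S$-linearity of the connecting homomorphism. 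Your argument for the direct-sum statement and the syzygy consequence is already optimal.
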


\begin{definition}

A subcategory $\C\sbe \md(R)$ is thick if 
\begin{enumerate}
\item $R\in\C$
\item $\C$ is closed under direct summands, that is if $X\oplus Y\in C$ then $X,Y\in \C$
\item $\C$ has the two out of three property, that is if $0\to X_1\to X_2\to X_3\to 0$ and $X_i,X_j\in\C$, then $X_l\in \C$ with $\{i,j,l\}=\{1,2,3\}$.
\end{enumerate}
Let $\thick M$ denote the smallest thick category containing $M$.

\end{definition}

The thick subcategories of $R$ are in bijection with the thick subcategories of the triangulated category $\underline{\MCM(R)}$, the stable category of maximal Cohen-Macaulay modules.  The category of modules of finite projective dimension is  thick. We can generalize this example:
by Proposition \ref{supvar2.4}, for any $V\sbe\PPP^{c-1}_{\tilde{k}}$, the category
\[\{M\in\md(R)\mid V^*(M)\sbe V\}\]
is thick.  It turns out that we can use the cohomological supports to classify all the thick subcategories of $\md(R)$ in this manner.  

Before proceeding, we will fix some notation which will prevent us from confusing the geometric subtleties regarding cohomological supports.  We will use the symbol $\PPP^{c-1}_{\tilde{k}}$ to denote the closed points of the scheme $\proj\tilde{k}[\chi_1,\dots,\chi_c]$.  Since 
\[V^*(M)\sbe \PPP^{c-1}_{\tilde{k}}=\maxspec\proj\tilde{k}[\chi_1,\dots,\chi_c]\]
we may let $\bar{V}^*(M)$ be the closure of $V^*(M)$ in $\proj\tilde{k}[\chi_1,\dots,\chi_c]$.  The integral extension $k[\chi_1,\dots,\chi_c]\sbe\tilde{k}[\chi_1,\cdots,\chi_c]$  induces the projection $\pi:\proj \tilde{k}[\chi_1,\dots,\chi_c]\to \proj k[\chi_1,\dots,\chi_c]$.  Set $\tilde{V}^*(M)=\pi(\bar{V}^*(M))$.  The following result is from \cite{Stevenson12}, and, in the zero dimensional case, from \cite[Remark 5.12]{CarlsonIyengar12}.  

\begin{theorem}

If $R$ is an isolated singularity and a complete intersection, we have a bijection
\[ \{\mbox{Thick subcategories of } \md(R)\} \longleftrightarrow \{\mbox{Specialization closed subsets of } \proj k[\chi_1,\dots,\chi_c]\}\]
given by the maps
\[\M\longmapsto \bigcup_{M\in \M} \tilde{V}^*(M) \]
\[\{M\in\md(R)\mid \tilde{V}^*(M)\sbe U\}\reflectbox{$\longmapsto$} U\]
where $\M$ is a thick subcategory of $\md(R)$ and $U$ is a specialization closed subset of $\proj k[\chi_1,\dots,\chi_c]$.  Furthermore, the closed subsets of $\proj k[\chi_1,\dots,\chi_c]$ correspond to the  cyclic thick subcategories, i.e. the thick subcategories of the form $\thick M$ for some $M\in\md(R)$.


\end{theorem}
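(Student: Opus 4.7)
The plan is to follow the Hopkins--Neeman--Thomason template, as adapted to the local complete intersection setting by Stevenson. Define two candidate maps
\[ \C_U := \{M\in\md(R)\mid \tilde V^*(M)\sbe U\},\qquad U_\M:=\bigcup_{M\in\M}\tilde V^*(M), \]
and prove that they are mutually inverse bijections between thick subcategories of $\md(R)$ and specialization closed subsets of $\proj k[\chi_1,\dots,\chi_c]$.

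The first step is well-definedness. The category $\C_U$ is thick by Proposition~\ref{supvar2.4}: the free module lies in $\C_U$ because $V^*(R)=\emptyset$, closure under summands comes from $V^*(X\op Y)=V^*(X)\cup V^*(Y)$, and the two-out-of-three property from $V^*(X_i)\sbe V^*(X_j)\cup V^*(X_l)$. On the other side, each $\tilde V^*(M)$ is closed, so $U_\M$ is a union of closed sets and hence specialization closed. The composition $U\mapsto\C_U\mapsto U_{\C_U}$ recovers $U$ because every closed subset of $U$ is realized as the cohomological support of some maximal Cohen--Macaulay module by Theorem~\ref{supvar2.11}, and any specialization closed set is the union of the closed subsets it contains.

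The substantive part of the theorem is the other composition $\M\mapsto U_\M\mapsto \C_{U_\M}$; that is, if $N$ satisfies $\tilde V^*(N)\sbe\bigcup_{M\in\M}\tilde V^*(M)$, then $N\in\M$. Here the isolated singularity hypothesis is essential, because it upgrades $\underline{\MCM(R)}$ to a tensor triangulated category on which cohomological support is stratifying in the sense of Benson--Iyengar--Krause. The standard argument proceeds via a Carlson-type construction: for each homogeneous $\zeta\in S$ one builds a Koszul object $K_\zeta$ whose support is cut out by $\zeta$, and by iterating tensor products and cones one exhibits, for any prescribed closed subset $W\sbe V^*(N)$, an object in $\thick N$ whose support is $W$. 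Combining this with Theorem~\ref{supvar2.9}, which converts disjointness of supports into vanishing of $\tor$ and $\ext$ and hence into splittings in the stable category, one shows that $N$ decomposes, up to the thick operations, into pieces each supported on a closed point of $U_\M$, and each such piece lies in $\M$ by the realization theorem applied inside $\M$. For the cyclic statement, $\thick M$ corresponds to the closed set $\tilde V^*(M)$, and conversely any closed set is realized as some $\tilde V^*(M)$, so the bijection forces $\C_{\tilde V^*(M)}=\thick M$.

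The main obstacle is the hard composition, specifically the minimality assertion that the collection of objects supported on a single closed point is already generated, via the thick operations, by any one module realizing that point. This is not formal from the Avramov--Buchweitz framework; it rests on the full local-to-global and minimality halves of the stratification machinery, together with the detailed behaviour of the cohomological operator action at closed points of $\proj S$, which is precisely the content supplied by Stevenson's cited work.
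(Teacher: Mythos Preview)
The paper does not supply its own proof of this theorem: it is quoted as a background result, attributed to \cite{Stevenson12} (and, in dimension zero, to \cite[Remark 5.12]{CarlsonIyengar12}), with the subsequent Remark~\ref{supvar2.10} explaining how Stevenson's general classification over $\sng\proj Q[y_1,\dots,y_c]/(\sum f_iy_i)$ specializes to the stated form when $R$ is an isolated singularity. So there is no in-paper argument to compare your sketch against.

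That said, your outline tracks the structure of the cited proofs accurately. The easy checks (thickness of $\C_U$, specialization-closedness of $U_\M$, and the composition $U\mapsto U_{\C_U}$) are essentially as you describe. You are also right that the substantive content lies entirely in the reverse composition and that it is \emph{not} formal from the Avramov--Buchweitz support calculus; it requires the stratification/local-to-global machinery (Benson--Iyengar--Krause style) that Stevenson develops for the singularity category of a complete intersection. Your final paragraph is honest about this, and that honesty is appropriate: what you have written is a proof \emph{plan} pointing to the correct black box, not a self-contained proof. One small caution: your appeal to Theorem~\ref{supvar2.11} for the surjectivity direction is stated over an algebraically closed residue field, whereas the bijection here is phrased over $\proj k[\chi_1,\dots,\chi_c]$; passing between $k$ and $\tilde k$ via $\tilde V^*$ needs a word, though this is handled in the cited sources.
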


\begin{remark}\label{supvar2.10}

In \cite{Stevenson12}, Stevenson actually classifies the thick subcategories over an arbitrary complete intersection ring in terms of the specialization closed subsets of a scheme $Y=\sng \proj Q[y_1,\dots,y_c]/(f_1y_1+\cdots+f_cy_c)$ where $y_i$ are in determinants over $Q$ and $\sng$ denotes the singularities.  He does this by assigning  $M\in\md(R)$ a closed set
 in $Y$ which we will denote by $\Ss (M)$.  On a set theoretic level, we have
\[Y=\coprod_{p\in \sing R} \proj k(p)[\chi_1,\dots,\chi_{c(p)}]\]
where $c(p)$ is the codimension of $R_p$ and $k(p)$ is the residue field of $R_p$.  Furthermore, we have
\[\Ss(M)\cap \proj k(p)[\chi_1,\dots,\chi_{c(p)}]=\tilde{V}_{R_p}^*(M_p).\]

\end{remark}

The next proposition follows immediately from these remarks.

\begin{prop}

For two modules $M,N\in\md(R)$, $V^*_{R_p}(M_p)\sbe V^*_{R_p}(N_p)$ for every $p\in\sing R$ if and only if $\thick M\sbe \thick N$.  

\end{prop}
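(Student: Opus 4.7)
The plan is to combine Stevenson's classification theorem (in the generalized form recorded in Remark \ref{supvar2.10}) with a fiberwise comparison of $V^*$ and $\tilde V^*$ at each singular prime.

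First I would apply the classification: it attaches to each $X\in\md(R)$ a closed subset $\Ss(X)\sbe Y$, and the cyclic thick subcategory $\thick X$ corresponds to $\Ss(X)$ under the stated bijection. Therefore $\thick M\sbe \thick N$ if and only if $\Ss(M)\sbe \Ss(N)$. Using the decomposition $Y=\coprod_{p\in\sing R}\proj k(p)[\chi_1,\dots,\chi_{c(p)}]$ together with the identity $\Ss(X)\cap\proj k(p)[\chi_1,\dots,\chi_{c(p)}]=\tilde V^*_{R_p}(X_p)$, the containment $\Ss(M)\sbe \Ss(N)$ is equivalent to the fiberwise statement $\tilde V^*_{R_p}(M_p)\sbe \tilde V^*_{R_p}(N_p)$ for every $p\in\sing R$.

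The remaining step, which I expect to be the main obstacle, is to verify that $\tilde V^*_{R_p}(M_p)\sbe \tilde V^*_{R_p}(N_p)$ is equivalent to $V^*_{R_p}(M_p)\sbe V^*_{R_p}(N_p)$ for each $p\in\sing R$. My plan here is to observe that, writing $I_X=\ann\ext_{R_p}(X,k(p))$ in $S_p=k(p)[\chi_1,\dots,\chi_{c(p)}]$, one has $\tilde V^*(X)=V(I_X)\sbe\proj S_p$ while $\bar V^*(X)=V(I_X\cdot (S_p\tns_{k(p)}\widetilde{k(p)}))=\pi^{-1}(\tilde V^*(X))$ under the integral projection $\pi$. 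Consequently, inclusions among the $\tilde V^*$'s correspond verbatim to inclusions among the $\bar V^*$'s; and since $V^*(X)$ is the Zariski-dense set of closed points of $\bar V^*(X)$, such inclusions further correspond to inclusions among the $V^*$'s. Assembling these equivalences across all $p\in\sing R$ yields the proposition.
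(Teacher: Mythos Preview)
Your proposal is correct and is precisely the argument the paper has in mind: the paper simply asserts that the proposition ``follows immediately from these remarks,'' i.e.\ from Stevenson's classification as recorded in Remark~\ref{supvar2.10}, and you have filled in the details of that deduction. The extra care you take in Step~3, translating between $V^*$, $\bar V^*$, and $\tilde V^*$ via the fact that the annihilator ideal is defined over $k(p)$, is exactly the point that makes the passage from $\tilde V^*$ back to $V^*$ legitimate, and the paper leaves this implicit.
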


The following example illustrates the different between working in $\PPP^{c-1}_{\tilde{k}}$ and $\proj\tilde{k}[\chi_1,\dots,\chi_c]$.

\begin{example}

Assume that $k$ is algebraically closed and let $\M=\{M\in \md(R)\mid \cx M\le R\}$.  It is easy to check that $\M$ is thick.  For each module $M\in\M$, $\dim V^*(M)$ is 0 or $-1$ and hence is a point or empty.  Furthermore,  for each point $p\in  \PPP^{c-1}_k$, there is module $M$ such that $V^*(M)=p$ by Theorem \ref{supvar2.11}.  It follows that $M\in \M$.  Also, we have
\[U:=\bigcup_{M\in\M} V^*(M)= \PPP^{c-1}_k\]
Now $V^*(k)=U$.  But, unless $R$ is a hypersurface, $\cx k>1$.  We can see that it does not follow that $k$ is in $\M$.  Indeed, we have $\tilde{V}^*(k)$, the support of $k$ on the entire scheme, is all of $\proj k[\chi_1,\dots,\chi_c]$.  On the other hand, he have
\[\tilde{U}:=\bigcup_{M\in\M} \tilde{V}^*(M)= \PPP^{c-1}_k\bigcup_{M\in\M} V^*(M)=\maxspec\proj k[\chi_1,\dots,\chi_c].\]
So the support of $\M$ on the entire scheme is the specialization closed set consisting of the closed points.  Thus when understanding thick subcategories, one needs to consider the entire scheme.

\end{example}

\subsection{Geometric join}\label{supvarjigglyopo}

In this subsection we give attention to another construction central to this paper.  

\begin{definition}

Let $U,V\sbe \PPP^n_k$ be Zariski closed sets.  We define the {\it join} of two sets to be 
\[\join(U,V)=\overline{\bigcup_{u\in U \ v\in V\ u\ne v} \lin(u,v)}\]
where $\lin(u,v)$ is the projective line containing $u$ and $v$.  Furthermore, in the case when $U=V$, we set $\sec V=\join(V,V)$ which, when $V$ is a variety, we refer to as the secant variety of $V$.  

\end{definition}

\begin{remark}

When $U$ and $V$ are disjoint Zariski closed sets, we may simplify this definition to 
\[\join(U,V)=\bigcup_{u\in U \ v\in V} \lin(u,v).\]
and we still obtain a closed set, \cite[Proposition 6.13,Example 6.14]{Harris95}.  In most contexts in this paper, we will be taking the join of disjoint sets.  

\end{remark}

\begin{remark}\label{supvar2.13}

There is some ambiguity with this definition when $V$ is empty.  To that end, we establish the following convention:
\[\join(U,\emptyset)=U.\]
This is the convention followed in \cite{Adlandsvik87}.  We justify this convention by working in affince space: the empty set corresponds to the the zero point and the join of the cone and the zero point is simply the original cone.


\end{remark}

The following in anopther interesting fact about joins.  

\begin{lemma}[{ \cite[Proposition 6.13,Example 6.14]{Harris95}}]\label{2.15}

If $U$ and $V$ are irreducible closed sets, then $\join(U,V)$ is also irreducible.

\end{lemma}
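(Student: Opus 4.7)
The plan is to realize $\join(U,V)$ as the closure of the image of an irreducible incidence variety, and then invoke the general principle that the closure of the continuous image of an irreducible space is irreducible. To set this up, let $\Delta = \{(u,v)\in U\times V : u=v\}$, a Zariski-closed subset of $U\times V$, and let $W = (U\times V)\setminus \Delta$. Since $U$ and $V$ are irreducible, so is their product $U\times V$, and the open subset $W$ (assumed nonempty; otherwise $\join(U,V)=\emptyset$ trivially) inherits irreducibility.

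Next, I form the incidence correspondence
\[I = \{\,((u,v),p)\in W\times \PPP^n_k : p\in \lin(u,v)\,\},\]
together with the two projections $\pi_1: I\to W$ and $\pi_2: I\to \PPP^n_k$. For each $(u,v)\in W$ the points $u$ and $v$ are distinct, so the projective line $\lin(u,v)$ they span is well-defined, and $\pi_1^{-1}(u,v)\cong \PPP^1_k$. The assignment $(u,v)\mapsto \lin(u,v)$ is a morphism to the Grassmannian $G(2,n+1)$, and $I$ is the pullback along it of the universal $\PPP^1$-bundle over $G(2,n+1)$. In particular, $\pi_1$ is a Zariski-locally trivial $\PPP^1$-bundle, so $I$ is irreducible of dimension $\dim U + \dim V + 1$. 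By construction,
\[\bigcup_{u\in U,\ v\in V,\ u\neq v} \lin(u,v) \;=\; \pi_2(I),\]
and so $\join(U,V) = \overline{\pi_2(I)}$. Since $I$ is irreducible and $\pi_2$ is a morphism of varieties, $\pi_2(I)$ is irreducible, and its closure is too; this gives the desired conclusion.

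The only step that requires genuine care is the irreducibility of $I$. One can either argue by the $\PPP^1$-bundle structure as above, or more elementarily observe that $\pi_1: I\to W$ is surjective with all fibers irreducible of the same dimension $1$, and use the standard fact (see e.g.\ \cite[Theorem 11.14]{Harris95}) that a morphism with irreducible base and irreducible equidimensional fibers has irreducible source. Either route handles the main obstacle, and the remainder of the argument is formal.
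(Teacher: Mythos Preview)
Your argument is correct and is essentially the standard incidence-correspondence proof given in the cited reference \cite[Proposition 6.13, Example 6.14]{Harris95}; the paper itself does not supply a proof but simply records the lemma with that citation. One small remark: the irreducibility of $U\times V$ you invoke uses that the ground field is algebraically closed, which is the setting of \cite{Harris95} and of the cohomological supports in this paper, so there is no issue.
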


To visualize the join, consider the following easy examples.  The join of two distinct points is a projective line, and the join of two skew lines is a three dimensional projective linear space.  In fact, the join of any two linear spaces is the smallest linear space containing both of them.  In particular, the secant variety of any linear space is itself.  

\begin{theorem}[{\cite[1.1]{Adlandsvik87}}]\label{2.13}

For two closed sets $U,V\sbe \PPP^n_k$, we have
\[\dim \join(U,V)\le \dim U+\dim V+1\]
and if $U\cap V=\emptyset$, then 
\[\dim \join(U,V)= \dim U+\dim V+1.\]

\end{theorem}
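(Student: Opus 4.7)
The plan is to reduce the claim to a dimension computation on affine cones. Write $CU,CV\sbe\AAA^{n+1}_k$ for the affine cones over $U$ and $V$, so $\dim CU=\dim U+1$ and $\dim CV=\dim V+1$, and consider the addition morphism
\[\sigma:CU\x CV\to\AAA^{n+1}_k,\qquad (u,v)\mapsto u+v.\]
The image $CU+CV:=\sigma(CU\x CV)$ is a cone in $\AAA^{n+1}_k$, and a direct check yields $\PPP(CU+CV)=\join(U,V)$: for $u,v$ independent, the projectivization of the plane $ku+kv$ is exactly $\lin([u],[v])$, while the degenerate contributions ($u=0$ or $v=0$) give points of $V\cup U\sbe\join(U,V)$. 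The upper bound then follows at once from $\dim(CU+CV)\le\dim(CU\x CV)=\dim U+\dim V+2$, so $\dim\join(U,V)=\dim(CU+CV)-1\le\dim U+\dim V+1$.

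For the equality under $U\cap V=\emptyset$, the key is to compute $\sigma^{-1}(0)$. Disjointness in projective space translates to $CU\cap CV=\{0\}$ in $\AAA^{n+1}_k$, and since $CV=-CV$ (being closed under scalar multiplication),
\[\sigma^{-1}(0)=\{(u,-u)\mid u\in CU\cap CV\}=\{(0,0)\}.\]
Now take any irreducible component $C$ of $CU\x CV$ of maximal dimension, which is necessarily $\dim U+\dim V+2$ (obtained as a product of top-dimensional components of $CU$ and $CV$). Because both $CU$ and $CV$ are cones, each such component contains the origin, hence $(0,0)\in C$ and $0\in\sigma(C)$. Applying the theorem on dimensions of fibres to the dominant restriction $C\to\overline{\sigma(C)}$ gives
\[0=\dim\bigl(C\cap\sigma^{-1}(0)\bigr)\ge\dim C-\dim\overline{\sigma(C)},\]
hence $\dim(CU+CV)\ge\dim\overline{\sigma(C)}\ge\dim C=\dim U+\dim V+2$. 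Combined with the upper bound this yields equality, so $\dim\join(U,V)=\dim U+\dim V+1$.

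The step that carries the content is the identity $\sigma^{-1}(0)=\{(0,0)\}$: this is exactly where the disjointness hypothesis enters, translating projective separation into an intersection property of cones. The remaining obstacle is largely cosmetic, namely handling the possible reducibility of $U$ and $V$ by passing to an irreducible component of $CU\x CV$ of maximal dimension, after which the standard fibre-dimension theorem closes the argument without further input.
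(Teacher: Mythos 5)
Your argument is correct. The paper does not prove this statement --- it is quoted directly from \AA dlandsvik --- and your cone-addition argument is essentially the standard (indeed the original) proof: identify $\join(U,V)$ with the projectivization of the image of $\sigma:CU\x CV\to \AAA^{n+1}$, get the upper bound from $\dim\sigma(CU\x CV)\le\dim(CU\x CV)$, and get the lower bound in the disjoint case from the fibre-dimension theorem once $\sigma^{-1}(0)=\{(0,0)\}$ is established; all the supporting facts you use (components of cones pass through the origin, $CV=-CV$, products of top-dimensional components over $\tilde k$ give top-dimensional components) are sound. The only caveat is cosmetic: with the paper's literal definition of $\join$ as the closure of the union of lines $\lin(u,v)$ with $u\ne v$, the identity $\PPP(CU+CV)=\join(U,V)$ can fail in degenerate cases such as $U=V=\{\mbox{point}\}$, but there one only needs the inequality $\dim\join(U,V)\le\dim(CU+CV)-1$, and in the disjoint case relevant to the equality the identification is exact.
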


The converse is not true, and, in fact, when it is not known in general when $\dim \join(U,V)= \dim U+\dim V+1$ in the case $U\cap V\ne \emptyset$ . In particular, an active topic of research is understanding when $\join(V,V)\ne 2\dim V+1$.

\section{Joins of cohomological supports}\label{supvarjoinssection}

In this section, let $(R,\mm,k)$ be a local complete intersection of codimension $c$.  The goal of this section is to prove the main result of this paper, namely the following theorem.    

\begin{theorem}\label{supvarmain1}

Let $R$ be a local complete intersection, and $M,N\in\md(R)$ with $M,N\ne 0$.  If $\tor_{>0}(M,N)=0$, then $V^*(M\tns N)=\join(V^*(M),V^*(N))$.  

\end{theorem}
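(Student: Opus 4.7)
My plan is to construct an explicit resolution of $M\otimes_R N$, compute the action of the cohomological operators on it directly, and then reduce the assertion to the fact that the coproduct on $S=k[\chi_1,\dots,\chi_c]$ is dual to the addition map on $\AAA^c_k$.

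By Lemmas \ref{supvar2.7} and \ref{supvar2.6}, I may assume $R=Q/(f_1,\dots,f_c)$ is complete with algebraically closed residue field. Since $\tor^R_{>0}(M,N)=0$, if $F_\bullet\to M$ and $G_\bullet\to N$ are free resolutions over $R$, then the total complex $F\otimes_R G$ is a free resolution of $M\otimes_R N$. Lifting $F,G$ to complexes of free $Q$-modules $\tilde F,\tilde G$ with differentials $\tilde\partial_F,\tilde\partial_G$ satisfying $\tilde\partial_F^2=\sum f_i\tilde\Phi_i^F$ and $\tilde\partial_G^2=\sum f_i\tilde\Phi_i^G$, the Koszul-sign cross terms in $(\tilde\partial_F\otimes 1+\tau\otimes\tilde\partial_G)^2$ cancel, yielding
\[
(\tilde\partial_{F\otimes G})^2 \;=\; \sum_i f_i\bigl(\tilde\Phi_i^F\otimes 1+1\otimes\tilde\Phi_i^G\bigr).
\]
Hence the $i$-th cohomological operator on $F\otimes_R G$ is $\Phi_i^F\otimes 1+1\otimes\Phi_i^G$, producing a Künneth-type identification of graded $S$-modules
\[
\ext^*_R(M\otimes_R N,k)\;\cong\;\ext^*_R(M,k)\otimes_k\ext^*_R(N,k),
\]
where $S$ acts on the right via the coproduct $\mu: S\to S\otimes_k S$, $\chi_i\mapsto \chi_i\otimes 1+1\otimes\chi_i$.

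The translation to geometry is now formal. Under $\spec$, the coproduct $\mu$ is dual to the addition map $\sigma:\AAA^c_k\times\AAA^c_k\to\AAA^c_k$, $(u,v)\mapsto u+v$. The $S\otimes_k S$-support of $\ext^*_R(M,k)\otimes_k\ext^*_R(N,k)$ equals the product cone $V(Q,\underline f;\hat M)\times V(Q,\underline f;\hat N)$, so viewing it as an $S$-module through $\mu$ gives
\[
V\bigl(Q,\underline f;\widehat{M\otimes_R N}\bigr)\;=\; V(Q,\underline f;\hat M)+V(Q,\underline f;\hat N)
\]
as subsets of $\AAA^c_{\tilde k}$, with no closure required because the Minkowski sum of two affine cones over closed projective sets is itself the affine cone over their join (and the join is closed in $\PPP^{c-1}$ by standard projective elimination). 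Projectivizing, the right-hand side becomes $\join(V^*(M),V^*(N))$, with the convention of Remark \ref{supvar2.13} absorbing the edge cases in which one of $V^*(M),V^*(N)$ is empty (so its affine cone collapses to $\{0\}$). This yields $V^*(M\otimes_R N)=\join(V^*(M),V^*(N))$.

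I expect the main obstacle to be the sign-careful computation that the cohomological operator on the tensor resolution is exactly the coproduct $\Phi_i^F\otimes 1+1\otimes\Phi_i^G$: one must pin down compatible lifts of $F,G$ to $Q$ together with the Koszul-signed total differential, and verify that the cross terms in $(\tilde\partial_{F\otimes G})^2$ cancel as genuine $Q$-linear maps before reducing modulo $(\underline f)$. Once this is in place, the remaining content is a clean instance of pushforward of support along a ring map together with the elementary identification of the affine cone over a join with the Minkowski sum of affine cones; the Tor-vanishing hypothesis is used solely to ensure that $F\otimes_R G$ resolves $M\otimes_R N$, which is what powers the Künneth comparison and the subsequent $S$-module isomorphism.
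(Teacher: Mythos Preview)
Your approach is correct and takes a genuinely different route from the paper's.  The paper proves the theorem by a layered induction: first the codimension-two case by a direct complexity count (Lemma \ref{supvar1.4}), then the containment $\join(V^*(M),V^*(N))\sbe V^*(M\tns N)$ by induction on the codimension via intersection with hyperplanes (Proposition \ref{supvar1.3}), and finally the reverse containment by constructing, for each point $p$ outside the join, an auxiliary MCM module $L$ with $V^*(L)=\{p\}$ and $V^*(M\tns L)$ equal to a specific line (Lemma \ref{supvar1.2}), and then arguing with a third tensor factor.  Your argument bypasses all of this: since $\tor_{>0}(M,N)=0$ and $F,G$ are minimal, $F\tns_R G$ is itself the minimal resolution of $M\tns_R N$, and the Eisenbud operators on it are visibly $\Phi_i^F\tns 1+1\tns\Phi_i^G$, so $\ext_R^*(M\tns N,k)\cong\ext_R^*(M,k)\tns_k\ext_R^*(N,k)$ as $S$-modules via the Hopf coproduct.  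The passage to geometry is then the elementary fact that restriction of scalars along $\mu:S\to S\tns_k S$ gives $V(\ann_S E)=\overline{\sigma(V(\ann_{S\tns S}E))}$, together with faithfulness of $E_M\tns_k E_N$ over $(S/\ann E_M)\tns_k(S/\ann E_N)$ and the identification of the Minkowski sum of disjoint affine cones with the affine cone over the join.  What the paper's approach buys is a self-contained argument using only the point-by-point description of $V^*$ in Theorem \ref{supvar2.2} and the existence results of \cite{Bergh07}, together with auxiliary lemmas (notably Lemma \ref{supvar1.2}) that may be of independent use; what your approach buys is a short conceptual explanation of \emph{why} the join appears---it is the projectivization of the addition map dual to the natural coalgebra structure on $S$---and it makes the Tor-vanishing hypothesis transparent: it is needed exactly so that $F\tns_R G$ resolves $M\tns_R N$.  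Two small points worth tightening in a write-up: make explicit that minimality of $F$ and $G$ forces the differential on $F\tns_R G$ to have entries in $\mm$, so the K\"unneth map is an isomorphism on the nose; and record the one-line check that $E_M\tns_k E_N$ is faithful over $(S/\ann E_M)\tns_k(S/\ann E_N)$, which is what pins down $\ann_{S\tns S}(E_M\tns_k E_N)$ exactly and hence the product cone $V_M\times V_N$.
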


 We prove Theorem \ref{supvarmain1} in a few steps.  First we recall a critical fact.

\begin{lemma}[{\cite[Proposition 2.1]{Miller98}}]\label{1.1.5}

If $\tor_{>0}(M,N)=0$, then $\cx(M\tns N)=\cx M+ \cx N$.  

\end{lemma}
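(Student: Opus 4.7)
The plan is to prove the two inequalities $\cx(M\tns N)\le \cx M+\cx N$ and $\cx(M\tns N)\ge \cx M+\cx N$ separately. Throughout, I will work over $\hat{R}$ (which does not change complexity) and freely use the earlier results of the paper.

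For the upper bound, the idea is to exploit Tor-vanishing to construct an explicit (non-minimal) free resolution of $M\tns N$. Let $F_\bl\to M$ and $G_\bl\to N$ be minimal free resolutions. The total complex of the double complex $F_\bl\tns_R G_\bl$ has homology $\tor^R_*(M,N)$, which by hypothesis is concentrated in degree zero and equals $M\tns_R N$. Hence $F_\bl\tns_R G_\bl$ is a free resolution of $M\tns_R N$, and counting ranks gives the Betti number inequality
\[\be_n(M\tns_R N)\ \le\ \sum_{p+q=n}\be_p(M)\be_q(N).\]
Over a local complete intersection the Betti sequence of any finitely generated module grows as a polynomial of degree $\cx M-1$ (the rationality of Poincar\'e series underlying Proposition \ref{2.14}). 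Convolving two polynomial-growth sequences of degrees $a-1$ and $b-1$ produces a sequence of polynomial growth of degree $a+b-1$, so $\cx(M\tns_R N)\le \cx M+\cx N$.

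For the lower bound, my plan is to pass through cohomological supports via Proposition \ref{2.14}, which reduces the inequality to $\dim V^*(M\tns N)\ge \dim V^*(M)+\dim V^*(N)+1$. By Theorem \ref{supvar2.9}, Tor-independence forces $V^*(M)\cap V^*(N)=\emptyset$. I would then reduce to the algebraically closed case using Lemmas \ref{supvar2.6} and \ref{supvar2.7}, and induct on the codimension $c$, using Corollary \ref{supvar2.12} to cut down by hyperplanes $\chi_i=0$. The base case $c=1$ is the hypersurface situation, where complexity is $0$ or $1$; if $\cx M$ and $\cx N$ were both positive and the supports disjoint in $\PPP^0$, one of them would have to be empty, which by Corollary \ref{supvar2.3} forces finite projective dimension and reduces the additivity to the trivial check $\cx(M\tns N)=\cx M+0=\cx M$ (using that tensoring with a finite pd module does not alter complexity when Tor vanishes, proved directly from the Betti inequality). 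The inductive step would use Theorem \ref{supvar2.2} to translate the statement into a statement about projective dimensions over intermediate complete intersections of smaller codimension and apply the induction hypothesis there.

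The main obstacle is the lower bound: the tensor resolution $F_\bl\tns G_\bl$ is typically far from minimal, and I must control how much cancellation is possible when passing to a minimal resolution. The cleanest way around this is to avoid analyzing the minimal resolution directly and instead argue at the level of support dimensions as above, where the loss in the tensor-resolution bound is precisely compensated for by the dimension formula of Theorem \ref{2.13} for the join of disjoint closed sets. I expect the reduction to the hypersurface case via Corollary \ref{supvar2.12}, and the careful bookkeeping of how $\chi_i$-slicing interacts with Tor-independence, to be where the argument genuinely requires the structural results collected in Section 2 rather than only the homological inequality above.
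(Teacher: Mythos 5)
There is a genuine gap, and it sits exactly where you flag the ``main obstacle'': the lower bound. The key fact you are missing is that the tensor product of two \emph{minimal} free resolutions is automatically a minimal complex. If $F_\bl\to M$ and $G_\bl\to N$ are minimal, then the differential of the total complex $T_\bl=F_\bl\tns G_\bl$ is $\dell^F\tns 1\pm 1\tns\dell^G$, and both summands have entries in $\mm$; since Tor-independence makes $T_\bl$ acyclic, $T_\bl$ \emph{is} the minimal free resolution of $M\tns N$, so your inequality is in fact an equality,
\[\be_n(M\tns N)=\sum_{p+q=n}\be_p(M)\be_q(N).\]
Combined with Avramov's theorem that over a complete intersection the Betti sequence of a module of complexity $a$ is eventually bounded below as well as above by polynomials of degree $a-1$ (not merely bounded above, which is all the definition of $\cx$ gives), this single identity yields both inequalities at once; this is essentially Miller's proof, and the paper itself only cites \cite{Miller98} rather than reproving the lemma. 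Your assertion that $F_\bl\tns G_\bl$ is ``typically far from minimal'' is precisely wrong in the one situation at hand, and abandoning it forces you into the much harder route below.

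That alternative route does not close. Deducing $\dim V^*(M\tns N)\ge\dim V^*(M)+\dim V^*(N)+1$ from Theorem \ref{2.13} requires the containment $\join(V^*(M),V^*(N))\sbe V^*(M\tns N)$, which is Proposition \ref{supvar1.3}; but the paper proves that proposition (and Lemma \ref{supvar1.4}) \emph{using} Lemma \ref{1.1.5}, in particular in the base cases $c=1,2$ of its codimension induction. Your own sketch inherits the same problem: in your base case $c=1$ you reduce to showing $\cx(M\tns N)=\cx M$ when $\pd N<\infty$, and you claim this follows ``directly from the Betti inequality'' --- but an upper bound on $\be_n(M\tns N)$ can only give $\cx(M\tns N)\le\cx M$, never the reverse inequality, which is the one you need. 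So the induction never gets off the ground without some independent lower-bound input, and the minimality observation above is the missing input.
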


We now prove a special case of the main result.

\begin{lemma}\label{supvar1.4}

If $R$ has codimension two and $\tor_{>0}(M,N)=0$ with $M,N\ne 0$, then 
\[V^*(M\tns N)=\join(V^*(M),V^*(N)).\]

\end{lemma}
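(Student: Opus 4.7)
My plan is to first reduce to the case where $R$ is complete and $k$ is algebraically closed via Lemmas \ref{supvar2.7} and \ref{supvar2.6}, noting that $V^*(M)\cap V^*(N)=\emptyset$ inside $\PPP^1_k$ by Theorem \ref{supvar2.9} and the Tor-vanishing hypothesis. Combining Miller's Lemma \ref{1.1.5} with Proposition \ref{2.14} gives $\dim V^*(M\tns N)=\cx M+\cx N-1$, which matches $\dim\join(V^*(M),V^*(N))$ by Theorem \ref{2.13}. Since closed subsets of $\PPP^1$ have dimension in $\{-1,0,1\}$, this already resolves two cases: if both supports are empty then $\pd_R(M\tns N)\le\pd_R M+\pd_R N<\infty$ (via Tor-vanishing) and both sides equal $\emptyset$; if the dimension sum plus one is at least $1$ then both sides are one-dimensional closed subsets of $\PPP^1$ and hence each equal $\PPP^1$. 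The disjointness hypothesis precludes larger dimension sums.

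The remaining case is when exactly one support is empty: by symmetry take $V^*(M)=\emptyset$ (so $\pd_R M<\infty$) and $V^*(N)=\{p_1,\dots,p_r\}\ne\emptyset$, in which case $\join=V^*(N)$ by Remark \ref{supvar2.13}. The inclusion $V^*(M\tns N)\sbe V^*(N)$ follows by iterating Proposition \ref{supvar2.4} along the exact sequence $0\to F_n\tns N\to\cdots\to F_0\tns N\to M\tns N\to 0$ obtained by tensoring a finite $R$-free resolution $F_\bl\to M$ with $N$. For the reverse inclusion, fix $p\in V^*(N)$; by Proposition \ref{supvar2.8} change coordinates so that $p=[1{:}0]$ with $f_p=f_1$, and set $R_1=Q/(f_1)$. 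Corollary \ref{supvar2.12} together with Corollary \ref{supvar2.3} gives $\pd_{R_1}M<\infty$ and $\pd_{R_1}N=\infty$, so the required claim $p\in V^*(M\tns N)$ reduces to showing $\pd_{R_1}(M\tns_R N)=\infty$.

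This final step, the main obstacle of the argument, is where I would use a derived-tensor trick over the hypersurface $R_1$. Writing $R=R_1/(\bar f_2)$ for a regular element $\bar f_2\in R_1$ that annihilates both $M$ and $N$, the change-of-rings spectral sequence combined with $\tor^R_{>0}(M,N)=0$ collapses to give $\tor^{R_1}_0(M,N)\cong\tor^{R_1}_1(M,N)\cong M\tns_R N$ and $\tor^{R_1}_i(M,N)=0$ for $i\ge 2$. Suppose for contradiction $\pd_{R_1}(M\tns_R N)<\infty$; then $M\lns_{R_1}N$ has perfect cohomology concentrated in two consecutive degrees and is therefore itself perfect in $\D^b(R_1)$. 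Applying $(-)\lns_{R_1}k$ and using associativity of derived tensor gives $(M\lns_{R_1}N)\lns_{R_1}k\simeq M\lns_{R_1}(N\lns_{R_1}k)$. The left side has finite total $k$-dimension (a bounded complex of $R_1$-free modules tensored with $k$). On the right, using minimal $R_1$-free resolutions---whose differentials vanish modulo $\mm_{R_1}$---we have $N\lns_{R_1}k\simeq\bigoplus_{i\ge 0}k[-i]^{\beta_i^{R_1}(N)}$ and $M\lns_{R_1}k\simeq\bigoplus_{j=0}^{\pd_{R_1}M}k[-j]^{\beta_j^{R_1}(M)}$ in $\D(R_1)$, so the right side has total $k$-dimension $\bigl(\sum_i\beta_i^{R_1}(N)\bigr)\bigl(\sum_j\beta_j^{R_1}(M)\bigr)$. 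The second factor is at least $\beta_0^{R_1}(M)\ge 1$ since $M\ne 0$, while $\pd_{R_1}N=\infty$ forces the first factor to diverge, yielding the required contradiction.
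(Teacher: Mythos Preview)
Your argument is correct. The reductions and the treatment of the two ``extreme'' cases (both supports empty; both supports nonempty, forcing each side to be all of $\PPP^1_k$ by the complexity count) coincide with the paper's proof.

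The substantive difference is in the mixed case $\pd_R M<\infty$, $\pd_R N=\infty$ with $\dim V^*(N)=0$. The paper argues as follows: Miller's lemma gives $\dim V^*(M\tns N)=\dim V^*(N)$; after replacing $N$ by a high syzygy so that it is maximal Cohen--Macaulay, Bergh's decomposition theorem \cite[Theorem~3.1]{Bergh07} splits $N=N_1\oplus\cdots\oplus N_n$ with each $V^*(N_i)$ a single point; then containment plus equal dimension plus irreducibility of a point forces $V^*(M\tns N_i)=V^*(N_i)$, and taking the union finishes. Your route is instead pointwise: for each $p\in V^*(N)$ you pass, via Theorem~\ref{supvar2.2} and a coordinate change, to the hypersurface $R_1=Q/(f_1)$, use the change-of-rings identification $\tor^{R_1}_{0}(M,N)\cong\tor^{R_1}_{1}(M,N)\cong M\tns_R N$ with all higher $\tor^{R_1}$ vanishing, and then derive a contradiction from $\pd_{R_1}(M\tns_R N)<\infty$ by comparing the total $k$-dimension of $(M\lns_{R_1}N)\lns_{R_1}k$ computed two ways.

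Each approach has its merits. The paper's argument stays within module theory but imports Bergh's splitting result, which is itself a nontrivial theorem about MCM modules over complete intersections. Your argument is more self-contained---it needs only the standard change-of-rings computation (the same one invoked later in the paper's Proposition~\ref{supvar1.3}) and elementary facts about perfect complexes---at the cost of a short excursion into the derived category. Your hypersurface trick also makes transparent \emph{why} the point $p$ survives in $V^*(M\tns N)$: the Betti numbers of $N$ over $R_1$ are unbounded and cannot be killed by tensoring with a nonzero perfect $M$. A minor stylistic point: the sentence ``the disjointness hypothesis precludes larger dimension sums'' and the description of the remaining case as ``exactly one support is empty'' are slightly loose (the case $V^*(N)=\PPP^1_k$ also has exactly one empty support but was already absorbed into the previous case), though the logic is sound once one tracks the possible dimension pairs.
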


\begin{proof}

By Lemma \ref{supvar2.7} and Lemma \ref{supvar2.6}, we may assume that $R$ is complete and $k$ is algebraically closed.  If $\pd M,\pd N<\infty$, then $\pd M\tns N<\infty$ and the conclusion is clear.  

Assume that $\pd M=\pd N=\infty$.  Since $V^*(M)$ and $V^*(N)$ are disjoint, nonempty, and lie in $\PPP_k^1$,  we know that $\dim V^*(M)=\dim V^*(N)=0$.  Therefore, the complexity of both $M$ and $N$ is one.  Since $\tor_{>0}(M,N)=0$, the complexity of $M\tns N$ is two, and thus $\dim V^*(M\tns N)=1$.  This means that $V^*(M\tns N)=\PPP_k^1= \join(V^*(M),V^*(N))$.

We now assume that $\pd M<\infty$ and $\pd N=\infty$. Using the conventions in Remark \ref{supvar2.14}, we may assume that $M\ne 0$.  We wish to show that $V^*(M\tns N)=V^*(N)$. Letting 
\[0\to R^{n_t}\to \cdots \to R^{n_0}\to M\to 0\]
be a free resolution, the sequence
\[0\to N^{n_t}\to \cdots \to N^{n_0}\to M\tns N\to 0\]
is exact, implying $V^*(M\tns N)\sbe V^*(N)$.  By Lemma \ref{1.1.5}, we have 
\[\cx(M\tns N)=\cx M+\cx N=\cx N.\]  Therefore $\dim V^*(M\tns N)$ is the same as $\dim V^*(N)$.  So if $V^*(N)$ is irreducible, we are done.  In particular, if  $\dim V^*(N)=1$, then $V^*(N)=\PPP_k^1$ and we are done.  

So suppose $\dim V^*(N)=0$, that is $V^*(N)=\{p_1,\dots ,p_n\}$ where $p_i$ are points.  The short exact sequence $0\to \zz N\to R^m\to N\to 0$ yields the short exact sequence  
\[0\to M\tns \zz N\to M^m\to M\tns N\to 0.\]
But since $V^*(M)$ is empty, we have $V^*(M\tns \zz N)=V^*(M\tns N)$ by Proposition \ref{supvar2.4}.  Thus, since $V^*(N)=V^*(\zz N)$ we may assume that $N$ is maximal Cohen-Macaulay by replacing $N$ with a sufficiently high syzygy.  Therefore, by Theorem  3.1 of \cite{Bergh07}, we may write $N=N_1\oplus\cdots\oplus N_n$ with $V^*(N_i)=\{p_i\}$.  Since each $V^*(N_i)$ is irreducible, we have
\begin{align*}
V^*(M\tns N)=V^*(&M\tns N_1\oplus\cdots \oplus M\tns N_n)\\
&=V^*(M\tns N_1)\cup\cdots \cup V^*(M\tns N_n)=\{p_1\}\cup\cdots\cup\{p_n\}=V^*(N)
\end{align*}
which completes the proof.  

\end{proof} 

\begin{prop}\label{supvar1.3}

  If $\tor_{>0}(M,N)=0$ with $M,N\ne 0$, then $\join(V^*(M),V^*(N))\sbe V^*(M\tns N)$.   

\end{prop}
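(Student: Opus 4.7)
The plan is to exploit the $\tor$-vanishing hypothesis to produce a K\"unneth-type isomorphism
\[
\ext^*_R(M\tns_R N,k)\cong \ext^*_R(M,k)\tns_k \ext^*_R(N,k)
\]
as graded $S$-modules with diagonal cohomological-operator action, and then extract the desired inclusion of varieties from the geometry of the addition map $\AAA^c\times\AAA^c\to\AAA^c$. First I would reduce to the case where $R$ is complete and $k$ is algebraically closed, using Lemmas \ref{supvar2.7} and \ref{supvar2.6}. By Theorem \ref{supvar2.9} the sets $V^*(M)$ and $V^*(N)$ are disjoint, so by the remark at the start of Subsection \ref{supvarjigglyopo} the set $\join(V^*(M),V^*(N))$ is already the union of the lines $\lin(a,b)$ with $a\in V^*(M)$ and $b\in V^*(N)$. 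It therefore suffices to fix such $a,b$ together with an arbitrary point $p=\lambda a+\mu b$ and show $p\in V^*(M\tns_R N)$.

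To establish the K\"unneth formula, pick $R$-free resolutions $F_\bl\to M$ and $G_\bl\to N$. Since $\tor^R_{>0}(M,N)=0$, a standard double-complex argument shows that the total complex $T_\bl=F_\bl\tns_R G_\bl$ is an $R$-free resolution of $M\tns_R N$. Lifting the differential to $Q$ by $\tilde\dell_T(u\tns v)=\tilde\dell_F u\tns v+(-1)^{|u|}u\tns\tilde\dell_G v$, squaring, and canceling the cross terms via Koszul signs gives
\[
\tilde\dell_T^2=\sum_i f_i\bigl(\tilde\Phi_i^F\tns 1+1\tns\tilde\Phi_i^G\bigr),
\]
so the cohomological operator on $T_\bl$ takes the form $\Phi_i^T=\Phi_i^F\tns 1+1\tns\Phi_i^G$. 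Since each $F_p\tns_R G_q$ is $R$-free, $\hm_R(F_p\tns_R G_q,k)=\hm_R(F_p,k)\tns_k\hm_R(G_q,k)$, and the K\"unneth theorem over the field $k$ yields the claimed isomorphism, under which $\chi_i$ acts diagonally: $\chi_i(x\tns y)=(\chi_i x)\tns y+x\tns(\chi_i y)$.

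To conclude, set $E_M=\ext^*_R(M,k)$ and $E_N=\ext^*_R(N,k)$. The diagonal action factors through the coproduct $\Delta\colon S\to S\tns_k S$, $\chi_i\mapsto\chi_i\tns 1+1\tns\chi_i$, giving
\[
\ann_S\ext^*_R(M\tns_R N,k)=\Delta^{-1}\bigl(\ann_{S\tns_k S}(E_M\tns_k E_N)\bigr).
\]
The support of $E_M\tns_k E_N$ as an external tensor product of finitely generated $S$-modules is the product $V^*(M)\times V^*(N)$ (of affine cones in $\AAA^c\times\AAA^c$), so for any $f\in\ann_S\ext^*_R(M\tns_R N,k)$ one has $(\Delta f)(\alpha,\beta)=f(\alpha+\beta)=0$ for every $(\alpha,\beta)\in V^*(M)\times V^*(N)$. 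Taking affine lifts $\tilde a,\tilde b$ of $a,b$ this gives $f(\lambda\tilde a+\mu\tilde b)=0$, and projectivizing yields $p\in V^*(M\tns_R N)$. The main technical obstacle will be verifying the diagonal form of the $\chi_i$-action in the K\"unneth isomorphism, which requires careful Koszul-sign bookkeeping in the lifted tensor differential and an appeal to Theorem \ref{supvar2.1} for independence of the cohomological operators from the choice of resolution; once this is in hand, the geometric step is a formal consequence of standard support-variety yoga.
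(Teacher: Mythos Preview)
Your approach is correct and takes a genuinely different route from the paper. The paper argues by induction on the codimension $c$: after handling $c\le 2$ directly (Lemma \ref{supvar1.4}), it fixes a hyperplane $H$, changes coordinates so that $H=V(\chi_1)$, passes to the intermediate complete intersection $T=Q/(f_2,\dots,f_c)$, and uses a change-of-rings formula for $\tor$ together with Corollary \ref{supvar2.12} and Proposition \ref{supvar2.4} to reduce to codimension $c-1$. Your argument instead exploits the $\tor$-vanishing to identify $F_\bl\tns_R G_\bl$ as a resolution of $M\tns_R N$, computes that the Eisenbud operators on this tensor resolution are $\Phi_i^T=\Phi_i^F\tns 1+1\tns\Phi_i^G$, and deduces a K\"unneth isomorphism $\ext_R^*(M\tns N,k)\cong E_M\tns_k E_N$ under which $S$ acts through the coproduct $\Delta$; the containment $\join(V^*(M),V^*(N))\sbe V^*(M\tns N)$ then falls out because the map $\spec(\Delta)$ is the addition map $\AAA^c\times\AAA^c\to\AAA^c$, whose image on the product of cones is precisely the cone on the join. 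Your route is more conceptual and mirrors the classical tensor-product theorem for support varieties in modular representation theory; it also explains structurally \emph{why} the join appears. The paper's route is more elementary in that it avoids verifying the diagonal form of the Eisenbud operators and the compatibility of the K\"unneth map with the $S$-action, relying only on the pointwise description in Theorem \ref{supvar2.2} and Corollary \ref{supvar2.12}; it also sets up the hyperplane-slicing machinery that is reused in Lemma \ref{supvar1.2} and the proof of Theorem \ref{supvarmain1}. The one place to be careful in your write-up is the sign bookkeeping: the cross terms in $\tilde\dell_T^2$ do cancel, but when you pass to $\hm_R(-,k)$ and invoke K\"unneth you should either take $F_\bl,G_\bl$ minimal (so all differentials vanish after $\hm_R(-,k)$ and no signs arise) or check explicitly that the identification $\hm_R(F_p\tns_R G_q,k)\cong\hm_R(F_p,k)\tns_k\hm_R(G_q,k)$ intertwines the dualized $\Phi_i^T$ with the diagonal action up to a harmless sign.
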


\begin{proof}

By Lemma \ref{supvar2.7} and Lemma \ref{supvar2.6}, we may assume that $R$ is complete and $k$ is algebraically closed.  We proceed by induction on the codimension of $R$, which we will denote by $c$. When $c=1$, the cohomological supports lie in $\PPP_k^0$, a point.  Thus the cohomological support of a module is either that point or empty, depending on whether or not the module has finite projective dimension.  Since $\tor_{>0}(M,N)=0$, the result follows from the equality $\cx M+\cx N=\cx M\tns N$. If $c=2$, the statement is true by Lemma \ref{supvar1.4}.

Now suppose that $c\ge 2$.  It suffices to show that for any hyperplane $H\sbe\mathbb{P}^{c-1}_k$ we have 
\[\join(V^*(M),V^*(N))\cap H\sbe V^*(M\tns N)\cap H\]
Since $c\ge 2$, any hyperplane is a linear space with dimension at least one.  Therefore, for any $x\in V^*(M)\cap H$ and $y\in V^*(N)$, the projective line between $x$ and $y$ is also in $H$.  Because of this, we have 
\[\join(V^*(M),V^*(N))\cap H=\join(V^*(M)\cap H,V^*(N)\cap H).\]
Thus we need to show that 
\[\join(V^*(M)\cap H,V^*(N)\cap H)\sbe V^*(M\tns N)\cap H.\]

To that end, we fix a hyperplane $H$.  Now may write $R=Q/(f_1,\dots,f_c)$ where $Q$ is a regular local ring and $f_1,\dots,f_c$ is a regular sequence.  By Proposition \ref{supvar2.8}, we may change our coordinate system and assume that $H=V(\chi_1)$ where $k[\chi_1,\dots,\chi_c]$ is the ring of cohomological operators.  Set $T=Q/(f_2,\dots,f_c)$ and $f=f_1$.  Note that $T$ is a complete intersection with $\codim T=c-1$, $f$ is regular on $T$, and $R=T/(f)$.  For any module $X\in\md(R)$, $V_R^*(X)\cap H=V_T^*(X)$ by Corollary \ref{supvar2.12}.  Therefore we need to show that 
\[\join(V_T^*(M),V_T^*(N))\sbe V_T^*(M\tns N).\]

Since $\tor^R_{>0}(M,N)=0$, by \cite[Lemma 9.3.8]{Avramov10}, we have $\tor_1^T(M,N)=M\tns N$ and $\tor_{>1}^T(M,N)=0$.  It follows that $\tor_{>0}^T(M,\zz_T N)=0$.  After tensoring $0\to \zz_T N\to T^t\to N\to 0$ with $M$, we get the exact sequence
\[0\to M\tns N\to M\tns \zz_T N\to M^t\to M\tns N\to 0.\]
Thus, by induction,  we have
\[ \join(V_T^*(M),V_T^*(N))=\join(V_T^*(M),V_T^*(\zz_T N))\sbe V_T^*(M\tns \zz_T N )\sbe V_T^*(M\tns N) \cup V_T^*(M).\]
Note, that we can only choose $\zz_T N$ to not be zero.  A similar argument using $\zz_T M$ gives us 
\[\join (V_T^*(M),V_T^*(N)) \sbe V_T^*(M\tns N)\cup V_T^*(N).\] 
This implies that 
\[\join(V_T^*(M),V_T^*(N)) \sbe V_T^*(M\tns N)\cup (V_T^*(M)\cap V_T^*(N))=V_T^*(M\tns N)\]
proving the claim.

\end{proof}

\begin{lemma}\label{supvar1.2}

Suppose that $c=\codim R\ge 2$, $R$ is complete, and $k$ is algebraically closed.  Fix  $M\in\md(R)$ such that $V_R^*(M)=q$ for some point $q\in \PPP_k^{c-1}$.  For any $p\in \PPP_k^{c-1}$ distinct from $q$, there exists an $L\in\MCM$ such that $V^*_R(L)=p$ and $V^*_R(M\tns L)=\join(p,q)$. 

\end{lemma}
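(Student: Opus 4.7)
The plan is to construct $L$ as a sufficiently high syzygy of an MCM module of cohomological support $\{p\}$, to verify via standard machinery that $V^*(L) = \{p\}$ and $\tor^R_{>0}(M, L) = 0$, and to establish $V^*(M \tns L) = \lin(p,q)$ by combining Proposition \ref{supvar1.3} with an inductive hyperplane-section argument.

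First I would apply Theorem \ref{supvar2.11} to obtain an MCM module $L_0$ with $V^*(L_0) = \{p\}$. Since $p \ne q$, Theorem \ref{supvar2.9} gives $\tor^R_{\gg 0}(M, L_0) = 0$. Setting $L := \Omega^n_R L_0$ for $n$ large enough yields $\tor^R_{>0}(M, L) = 0$; because $R$ is Gorenstein, syzygies of MCM modules remain MCM, so $L \in \MCM$, and $V^*(L) = V^*(L_0) = \{p\}$ by Proposition \ref{supvar2.4}. Proposition \ref{supvar1.3} then supplies $\lin(p,q) = \join(V^*(M), V^*(L)) \sbe V^*(M \tns L)$, and Lemma \ref{1.1.5} together with Proposition \ref{2.14} yields $\dim V^*(M \tns L) = \cx M + \cx L - 1 = 1 = \dim \lin(p,q)$.

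The nontrivial direction is $V^*(M \tns L) \sbe \lin(p,q)$. I would argue by induction on $c$, assuming Theorem \ref{supvarmain1} for all smaller codimensions. The base $c = 2$ is immediate since $\lin(p,q) = \PPP^1_k$. For $c \ge 3$, note that $\bigcap_{H \supseteq \lin(p,q)} H = \lin(p,q)$ in $\PPP^{c-1}$, so it suffices to prove $V^*(M \tns L) \sbe H$ for every hyperplane $H$ containing $\lin(p,q)$. Fix such an $H$; use Proposition \ref{supvar2.8} to change coordinates so $H = V(\chi_c)$, and set $T = Q/(f_1, \ldots, f_{c-1})$, a codim-$(c-1)$ CI with $R = T/(f_c)$ and $f_c$ regular on $T$. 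Corollary \ref{supvar2.12} identifies $V_T^*(X) = V_R^*(X) \cap H$ for $R$-modules $X$. The change-of-rings lemma \cite[Lemma 9.3.8]{Avramov10} converts $\tor^R_{>0}(M, L) = 0$ into $\tor^T_1(M, L) \cong M \tns_R L$ and $\tor^T_{>1}(M, L) = 0$, hence $\tor^T_{>0}(M, \Omega_T L) = 0$. Applying the inductive Theorem \ref{supvarmain1} over $T$ to the pair $(M, \Omega_T L)$ yields $V_T^*(M \tns_T \Omega_T L) = \join(\{q\}, \{p\}) = \lin(p,q)$; tensoring the SES $0 \to \Omega_T L \to T^k \to L \to 0$ with $M$ over $T$, together with the identification $M \tns_T L = M \tns_R L$ (valid since $f_c$ annihilates both $M$ and $L$), should transfer this to $V_T^*(M \tns_R L) = \lin(p,q)$, i.e., $V_R^*(M \tns L) \cap H = \lin(p,q)$.

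The main obstacle is upgrading "$V^*(M \tns L) \cap H = \lin(p,q)$ for every $H$" to "$V^*(M \tns L) \sbe H$ for every $H$", which is what actually enables the intersection-over-all-$H$ step to close: a spurious one-dimensional component off $\lin(p,q)$ could in principle evade individual hyperplanes. I expect this to require either choosing $L_0$ (or the syzygy number $n$) with sufficient genericity so that no extra components survive, or a direct verification via Theorem \ref{supvar2.2}: for each $a \notin \lin(p,q)$, both $M$ and $L$ have finite projective dimension over the hypersurface $Q/(\tilde{a} \cdot \tilde{f})$, and one must leverage the hypothesis $\tor^R_{>0}(M, L) = 0$ to conclude the same finiteness for $M \tns L$.
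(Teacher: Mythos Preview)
Your proposal has a genuine gap, and it is not quite where you locate it. You correctly reach $V_T^*(M\tns_T\zz_T L)=\lin(p,q)$ by induction, but the transfer to $V_T^*(M\tns_R L)$ does not follow from the four-term sequence
\[
0\to M\tns_R L\to M\tns_T\zz_T L\to M^k\to M\tns_R L\to 0.
\]
Splitting it and applying Proposition~\ref{supvar2.4} only yields containments of the form $V_T^*(M\tns_R L)\sbe \lin(p,q)\cup V_T^*(K)$ with $V_T^*(K)\sbe V_T^*(M)\cup V_T^*(M\tns_R L)$, which is vacuous; the two-out-of-three property gives no upper bound on $V_T^*(M\tns_R L)$ here. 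Moreover, your hyperplane strategy collapses entirely when $c=3$: the only hyperplane of $\PPP^2$ containing $\lin(p,q)$ is $\lin(p,q)$ itself, so the condition ``$V^*(M\tns L)\cap H=\lin(p,q)$ for every such $H$'' says nothing about points off the line. (For $c\ge 4$ your stated ``obstacle'' is in fact not one: if $V^*(M\tns L)\cap H=\lin(p,q)$ held for \emph{all} hyperplanes $H\spe\lin(p,q)$, then since every point lies in such an $H$, the reverse inclusion would be immediate. The problem is that you have not established that equality.)

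The paper's proof avoids both issues by constructing $L$ explicitly rather than invoking Theorem~\ref{supvar2.11}. After moving $p$ to $(1,0,\dots,0)$, it takes $L=X/(f_2,\dots,f_c)X$ where $X=\zz^{d-1}_{Q/(f_1)}k$. The payoff is that for any test point $r\notin\lin(p,q)$ one may move $r$ to $(0,1,0,\dots,0)$, set $S=Q/(f_1,f_2)$, and observe that $M\tns_R L$ is \emph{literally equal} to $M\tns_S X'$ with $X'=X/f_2X$ MCM over $S$; no syzygy shift intervenes, so there is no transfer problem. Since the line $l'=\lin(p,r)$ misses $q$, one has $V_S^*(M)=\emptyset$, hence $\tor^S_{>0}(M,X')=0$ by Lemma~\ref{supvar3.4}, and the codimension-two Lemma~\ref{supvar1.4} applies directly to give $V_S^*(M\tns_R L)=V_S^*(X')=\{p\}$, i.e.\ $r\notin V_R^*(M\tns L)$. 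The slicing is by \emph{lines through $p$ and the test point}, not by hyperplanes through $\lin(p,q)$; this is what makes the argument go through uniformly for all $c\ge 2$ and why a generic $L$ with the right support does not suffice.
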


\begin{proof}

As $R$ is complete, we write $R=Q/(f_1,\dots,f_c)$ where $Q$ is a regular local ring and $\underline{f}$ is a regular sequence. After a change of coordinates, we may assume that $p=(1,0,0,\dots,0)$.  Set $T=Q/(f_1)$, $X=\zz^{d-1}_T k$ where $d=\dim Q$, and $L=X/(f_2,\dots,f_c)X$.   We prove that $L$ is our desired module.

First, we show that  $V^*(L)=p$.    Since $\pd_T L=\infty$, it follows from Theorem  \ref{supvar2.2} that $p\in V^*(L)$.  Take any point $p'\in \PPP^{c-1}_k$ such that $p'\ne p$.  By Theorem \ref{supvar2.11}, there exists a $Y\in\md(R)$ such that $p'= V^*(Y)$.  It follows from Theorem \ref{supvar2.2} that  $\pd_T Y<\infty$.  Furthermore,  we have $\tor_i^{R}(Y,L)\cong \tor_i^{T}(Y,X)$  for all $i>0$, and thus $\tor_{\gg0}^{R}(Y,L)=0$.  Therefore $V^*(Y)\cap V^*(L)=\emptyset$ and so $p'\notin V^*(L)$.  We now have $V^*(X)=\{p\}$ as claimed.  

Set $l=\join(p,q)$.  We now show that $V^*(M\tns L)=l$.    By proposition \ref{supvar1.3}, we have $l\sbe V^*(M\tns L)$.  Take any point $r\notin l$.  We claim that $r$ is not in $V^*(M\tns L)$. After changing coordinates, we may assume that $r=(0,1,0,\dots, 0)$.  Set $S=Q/(f_1,f_2)$ and $X'=X/f_2X$.  Let $l'$  be the projective line defined by $p$ and $r$.  Since $r\notin l$, we have $q\notin l'$. Hence Corollary \ref{supvar2.12} implies that $V_{S}^*(M)=V^*_R(M)\cap l'=\emptyset$.   Therefore, $\tor_{\gg 0}^{R'}(M,X')=0$.  However, since $X'$ is maximal Cohen-Macaulay over $S$, Lemma \ref{supvar3.4} implies that $\tor_{>0}^{S}(M,X')=0$.  Since $M\tns L\cong M\tns X'$, and since $\codim S=2$, Lemma \ref{supvar1.4} implies that
\[V_{S}^*(M\tns L)=V_{S}^*(M\tns X')=\join(V_{S}^*(M),V_{S}^*(X'))=V_S^*(X').\]
By Corollary \ref{supvar2.12} and Corollary \ref{supvar2.14}, we have $V_{S}^*(X')=V^*_{S}(L)=V_R^*(L)\cap l'=p$.  Therefore, we have $p=V_{S}^*(M\tns L)=V_R^*(M\tns L)\cap l'$ by Corollary \ref{supvar2.12}.   Since $r$ is in $l'$, $r$ is not in $V^*(M\tns L)$, as desired.  

\end{proof}

We now proceed with the proof of the main result of this paper.  

\begin{proof}[Proof of Theorem \ref{supvarmain1}]

By Lemma \ref{supvar2.7} and Lemma \ref{supvar2.6}, we may assume that $R$ is complete and $k$ is algebraically closed.   First, we note that we may assume that neither $M$ nor $N$ is zero since otherwise the statement is trivial.  Proposition \ref{supvar1.3} gives us one containment, which leaves us to show the reverse containment: 
\[V^*(M\tns N)\sbe \join(V^*(M),V^*(N)).\]
We will proceed by induction using induction on $\al(M,N)=2\depth R-\depth M-\depth N$.  
Assume for the moment that we have shown the base case, i$.$ e$.$ the theorem is true when when $\al(M,N)=0$, which is precisely the case when both $M$ and $N$ are maximal Cohen-Macaulay.  Suppose that $\al(M,N)>0$.  Then one of the modules, say $M$, is not maximal Cohen-Macaulay, and  $\al(\zz M,N)=\al(M,N)-1$.  Tensoring the short exact sequence $0\to \zz M\to R^s\to M\to 0$ with $N$ yields 
\[0\to  \zz M\tns N\to N^s\to M\tns N\to 0.\]  By induction, we have the following
\[V^*(M\tns N)\sbe V^*(N)\cup V^*(\zz M\tns N)\sbe V^*(N)\cup \join(V^*(\zz M),V^*(N))=\join(V^*(M),V^*(N))\]
which yields the desired inclusion.

We now prove the base case.  Assume that $\al(M,N)=0$ or, equivalently, that $M$ and $N$ are maximal Cohen-Macaulay modules.   
%
First we show the theorem when $V^*(M)$ is simply a single point, say $q$.  Take any  $p\notin \join(V^*(M),V^*(N))$.
  By Lemma \ref{supvar1.2},  there exists maximal Cohen-Macaulay module $L$ such that $V^*(L)=p$ and $V^*(M\tns L)=\join(p,q)=\join(V^*(M),V^*(L))$.   However, since $p\notin\join(V^*(M),V^*(N))$, there are no lines containing $p$, $q$ and a point in $V^*(N)$.  Therefore $V^*(M\tns L)=\join(p,q)$ and $V^*(N)$ are disjoint.  Since $M,N,L$ are all maximal Cohen-Macaulay, this shows that $\tor_{>0}(M,L)=0$ and also $\tor_{>0}(M\tns L, N)=0$.  Now let $A_\bl,B_\bl,C_\bl$ be free resolutions of $L,M,N$ respectively.  Then,  $(M\tns L)\tns C_\bl$ is quasi-isomorphic to $\mbox{Tot}_\bl(A_\bl\tns B_\bl\tns C_\bl)$ which is quasi-isomorphic  to $A_\bl\tns(M\tns N)$.  Therefore  $\tor_i(L, M\tns N)\cong \tor_i(M\tns L,N)=0$ for all $i\gg 0$.  This implies that $V^*(M\tns N)$ does not contain $p=V^*(L)$, yielding the desired containment.

Now we show the general case.  Again, take a point $p\notin\join(V^*(M),V^*(N))$.    By Theorem \ref{supvar2.11} there exists an $L\in\md(R)$ with $V^*(L)=p$.  In the previous paragraph, we have shown that $V^*(M\tns L)=\join(V^*(M),V^*(L))$.  Thus the argument  in the previous paragraph still applies, completing the proof.

\end{proof}

\section{Corollaries of Theorem \ref{supvarmain1}}\label{supvarcorollaries}

We now state some interesting corollaries of Theorem \ref{supvarmain1}.   The following is immediate.  

\begin{corollary}\label{supvar4.1}

If $N$ is not zero and $\tor_{>0}(M,N)=0$, then $V^*(M)\sbe V^*(M\tns N)$.  

\end{corollary}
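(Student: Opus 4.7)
The plan is very short, because the corollary is advertised as ``immediate.'' The starting point is Theorem \ref{supvarmain1}, which (assuming $M\neq 0$, otherwise both sides are empty and the inclusion is trivial) identifies $V^*(M\tns N)$ with $\join(V^*(M),V^*(N))$. So the entire content reduces to the purely geometric observation that $V^*(M)\sbe \join(V^*(M),V^*(N))$.

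To verify that observation I would split on whether $V^*(N)$ is empty. If $V^*(N)=\emptyset$, the convention established in Remark \ref{supvar2.13} gives $\join(V^*(M),\emptyset)=V^*(M)$, so the inclusion is actually an equality. If $V^*(N)\neq\emptyset$, then the Tor-independence hypothesis together with Theorem \ref{supvar2.9} guarantees that $V^*(M)\cap V^*(N)=\emptyset$; in particular every point $u\in V^*(M)$ is distinct from every $v\in V^*(N)$, so picking any such $v$ the line $\lin(u,v)$ lies in $\join(V^*(M),V^*(N))$ and visibly contains $u$. Either way, $V^*(M)\sbe\join(V^*(M),V^*(N))=V^*(M\tns N)$.

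There is essentially no obstacle: all the work has been done in proving Theorem \ref{supvarmain1}, and the only sanity check is that the empty-$V^*(N)$ case is handled by the stated convention rather than being vacuous. For that reason I would present the proof as a single sentence invoking Theorem \ref{supvarmain1} and Remark \ref{supvar2.13}, with a parenthetical note on the $V^*(N)=\emptyset$ case.
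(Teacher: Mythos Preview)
Your proposal is correct and matches the paper's approach exactly: the paper simply declares the corollary ``immediate'' from Theorem \ref{supvarmain1}, and your argument is precisely the unpacking of that word, including the careful handling of the $M=0$ and $V^*(N)=\emptyset$ edge cases via Remark \ref{supvar2.13}.
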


From this we derive a plethora of other corollaries. 

\begin{corollary}

If $N\ne 0$ and $\tor_{>0}(M,N)=0$, then the following hold.  
\begin{enumerate}
\item $\ext^{\gg 0}(M\tns N,L)=0\Rightarrow\ext^{\gg 0}(M,L)=0$
\item $\ext^{\gg 0}(L,M\tns N)=0\Rightarrow\ext^{\gg 0}(L,M)=0$
\item $\tor_{\gg 0}(M\tns N,L)=0\Rightarrow\tor_{\gg 0}(M,L)=0$
\end{enumerate}

\end{corollary}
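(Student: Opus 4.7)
The plan is to route everything through Theorem \ref{supvar2.9}, which tells us that each of the three eventual-vanishing conditions appearing in the statement is equivalent, for finitely generated modules over a local complete intersection, to the disjointness of the corresponding cohomological supports. This reduces all three implications to a single set-theoretic containment of cohomological supports.

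First I would observe that for each of (1), (2), (3), the hypothesis $\ext^{\gg 0}(M\tns N,L)=0$, $\ext^{\gg 0}(L,M\tns N)=0$, or $\tor_{\gg 0}(M\tns N,L)=0$ is equivalent, by Theorem \ref{supvar2.9}, to the single geometric condition
\[V^*(M\tns N)\cap V^*(L)=\emptyset.\]
Similarly, each of the three desired conclusions (the analogous vanishing with $M$ in place of $M\tns N$) is equivalent to $V^*(M)\cap V^*(L)=\emptyset$. So it suffices to prove the implication
\[V^*(M\tns N)\cap V^*(L)=\emptyset\ \Longrightarrow\ V^*(M)\cap V^*(L)=\emptyset.\]

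The key step is to invoke Corollary \ref{supvar4.1}: since $N\ne 0$ and $\tor_{>0}(M,N)=0$, we have $V^*(M)\sbe V^*(M\tns N)$. Intersecting with $V^*(L)$ on both sides gives
\[V^*(M)\cap V^*(L)\sbe V^*(M\tns N)\cap V^*(L)=\emptyset,\]
and the desired implication follows. Applying Theorem \ref{supvar2.9} once more to translate back from cohomological supports to vanishing of Ext/Tor yields all three statements simultaneously.

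There is essentially no obstacle here beyond correctly citing the results already established: the whole corollary is a formal consequence of Corollary \ref{supvar4.1} together with the support-theoretic reformulation of eventual Ext/Tor vanishing in Theorem \ref{supvar2.9}. The only thing to be mindful of is that Theorem \ref{supvar2.9} gives the equivalence of (2), (3), (4) in its statement with the disjointness of supports, so all three implications (1)--(3) of the corollary collapse to the same geometric inclusion and can be handled in one stroke.
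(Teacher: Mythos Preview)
Your proposal is correct and follows essentially the same argument as the paper: reduce all three implications to the single geometric statement that $V^*(M\tns N)\cap V^*(L)=\emptyset$ implies $V^*(M)\cap V^*(L)=\emptyset$, which follows from the inclusion $V^*(M)\sbe V^*(M\tns N)$ of Corollary~\ref{supvar4.1}, and then invoke Theorem~\ref{supvar2.9} to pass between support disjointness and eventual Ext/Tor vanishing. The paper's proof is a one-line citation of the previous corollary; your write-up is simply a more explicit version of the same reasoning.
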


\begin{proof}

The previous corollary shows that $V^*(M\tns N,L)=\emptyset$ implies that $V^*(M,L)=\emptyset$.

\end{proof}

\begin{corollary}

Suppose $\tor_{>0}(M,N)=0$.  If $\sing R\sbe \supp N\cup (\spec R\del \supp M)$, then $M$ is in $\thick M\tns N$.  In particular, if $R$ is an isolated singularity, then  $\tor_{>0}(M,N)=0$ implies that $M,N\sbe \thick M\tns N$ when $M,N\ne 0$. 

\end{corollary}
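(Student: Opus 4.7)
The plan is to reduce the statement to a containment of cohomological supports at each singular prime, and then invoke Corollary \ref{supvar4.1} locally. The key tool is the Proposition immediately following Remark \ref{supvar2.10}, which says that $\thick M\sbe\thick N$ is equivalent to $V^*_{R_p}(M_p)\sbe V^*_{R_p}(N_p)$ for every $p\in\sing R$. So to show $M\in\thick(M\tns N)$ it suffices to establish
\[
V^*_{R_p}(M_p)\sbe V^*_{R_p}((M\tns N)_p)\qquad\text{for all }p\in\sing R.
\]

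First I would fix $p\in\sing R$ and split on which alternative of the hypothesis $\sing R\sbe \supp N\cup(\spec R\del\supp M)$ applies. If $p\notin\supp M$, then $M_p=0$, so $V^*_{R_p}(M_p)=\emptyset$ and the containment is trivial. If instead $p\in\supp N$, then $N_p\ne 0$; moreover localizing the Tor-vanishing hypothesis gives $\tor^{R_p}_{>0}(M_p,N_p)=0$. Now $R_p$ is itself a local complete intersection, so Corollary \ref{supvar4.1} applies over $R_p$ and yields
\[
V^*_{R_p}(M_p)\sbe V^*_{R_p}(M_p\tns_{R_p} N_p)=V^*_{R_p}((M\tns N)_p),
\]
which is exactly what we need. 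Combining the two cases over all $p\in\sing R$ and invoking the Proposition gives $\thick M\sbe\thick(M\tns N)$, hence $M\in\thick(M\tns N)$.

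For the isolated singularity statement, I would note that $\sing R=\{\mm\}$, and since $N$ is a nonzero finitely generated module over the local ring $R$, we have $\mm\in\supp N$. Thus the hypothesis is automatic and the first part applies to give $M\in\thick(M\tns N)$; by the symmetry of the Tor-vanishing and tensor product, the same argument with the roles of $M$ and $N$ swapped yields $N\in\thick(M\tns N)$.

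There is no serious obstacle here: the heart of the work is already carried by Corollary \ref{supvar4.1} (which is itself a consequence of Theorem \ref{supvarmain1}) and by the thick-subcategory classification recalled in Remark \ref{supvar2.10}. The only subtlety to check is that all needed hypotheses survive localization at $p$ — the Tor-vanishing does trivially, and the nonvanishing $N_p\ne 0$ is exactly the content of $p\in\supp N$, which is why the hypothesis is phrased in terms of supports.
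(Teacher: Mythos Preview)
Your proposal is correct and follows essentially the same approach as the paper: localize the Tor-vanishing, split on the two alternatives of the support hypothesis, apply Corollary \ref{supvar4.1} over $R_p$ when $N_p\ne 0$, and conclude via the thick-subcategory classification in Remark \ref{supvar2.10} (equivalently, the Proposition right after it). Your write-up is in fact slightly more careful than the paper's, since you explicitly separate the trivial case $M_p=0$ rather than folding it into the appeal to Corollary \ref{supvar4.1}.
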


\begin{proof} 

First note that $\tor_{>0}(M_p,N_p)=0$ for every $p\in\spec R$. Let $p\in\sing R$.  Then either $p\in \supp N$ or $p\notin\supp M$.  Then Corollary  \ref{supvar4.1} implies that $V_{R_p}^*(M_p)\sbe V_{R_p}^*((M\tns N)_p)$ for all $p\in\sing R$.  The result then follows by Remark \ref{supvar2.10}.  

\end{proof}

%
%
%
%
%
%
%
%

The next results use the following lemma.

\begin{lemma}\label{supvar3.4}

Suppose $R$ is a complete intersection ring and either $M$ or $N$ is maximal Cohen-Macaualay.  If $\tor_{\gg 0}(M,N)=0$, then $\tor_{>0}(M,N)=0$. 
\end{lemma}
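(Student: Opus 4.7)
The plan is to reduce first, via Lemmas \ref{supvar2.7} and \ref{supvar2.6}, to the case where $R$ is complete with algebraically closed residue field; by the symmetry of $\tor$ and of the cohomological support, I may then assume that $M$ is MCM. I would proceed by induction on $c=\codim R$.

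The base cases are quick. For $c=0$, $R$ is regular and the MCM hypothesis forces $M$ to be free, so $\tor^R_{>0}(M,-)=0$ trivially. For $c=1$, $R$ is a hypersurface and Eisenbud's theorem (\cite{Eisenbud80}) gives that the minimal $R$-free resolution of $M$ (after splitting off any free summand) is $2$-periodic; hence $\tor^R_i(M,N)\cong\tor^R_{i+2}(M,N)$ for all $i\ge 1$, and eventual vanishing propagates to every positive degree.

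For the inductive step $c\ge 2$, I would write $R=R'/(f)$ with $R'$ a complete intersection of codimension $c-1$ and $f\in R'$ an $R'$-regular element corresponding to the cohomological operator $\chi_c$. By Corollary \ref{supvar2.12}, $V^*_{R'}(X)=V^*_R(X)\cap V(\chi_c)$ for any $R$-module $X$, so the assumption $V^*_R(M)\cap V^*_R(N)=\emptyset$ (furnished by Theorem \ref{supvar2.9}) yields $V^*_{R'}(M)\cap V^*_{R'}(N)=\emptyset$, and thus $\tor^{R'}_{\gg 0}(M,N)=0$. Since $\depth_{R'}M=\depth_R M=\dim R'-1$, the first $R'$-syzygy $\zz_{R'}M$ is MCM over $R'$ (or zero, in which case $M$ is $R'$-free and the claim is immediate), and Proposition \ref{supvar2.4} gives $V^*_{R'}(\zz_{R'}M)=V^*_{R'}(M)$. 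Applying the inductive hypothesis to the pair $(\zz_{R'}M,N)$ over $R'$ yields $\tor^{R'}_{>0}(\zz_{R'}M,N)=0$, equivalently $\tor^{R'}_j(M,N)=0$ for all $j\ge 2$.

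To transfer this back to $R$, I would invoke the change-of-rings spectral sequence $E^2_{p,q}=\tor^R_p(\tor^{R'}_q(R,M),N)\Rightarrow\tor^{R'}_{p+q}(M,N)$; since $\tor^{R'}_q(R,M)\cong M$ for $q\in\{0,1\}$ and vanishes otherwise, only two rows survive, the $d_2$ differential is (up to sign) the cohomological operator $\chi_c$, and higher differentials vanish for degree reasons. The vanishing $\tor^{R'}_j(M,N)=0$ for $j\ge 2$ then forces $\chi_c\colon\tor^R_{j+2}(M,N)\to\tor^R_j(M,N)$ to be an isomorphism for every $j\ge 1$, and combined with $\tor^R_{\gg 0}(M,N)=0$, iterating this isomorphism along each parity class yields $\tor^R_j(M,N)=0$ for all $j\ge 1$. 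The main obstacle is exactly this inductive step: $M$ does not remain MCM over $R'$ (its depth drops by one), so one must first pass to $\zz_{R'}M$ to invoke the inductive hypothesis, and then reconstruct the full downward Tor-vanishing over $R$ via the Eisenbud-operator action encoded by the change-of-rings spectral sequence.
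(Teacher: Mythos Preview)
Your proof is correct, but it takes a genuinely different route from the paper's.

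The paper's argument is a two-line application of the AB property: if some $\tor_i(M,N)\neq 0$ with $i>0$, then since $M$ (say) is MCM over the Gorenstein ring $R$, one can take cosyzygies $M'$ of $M$ to shift the nonvanishing index arbitrarily high while preserving $\tor_{\gg 0}(M',N)=0$; this contradicts the AB bound $\tor_{>\dim R}(M',N)=0$ for complete intersections, quoted from \cite[Corollary~3.4]{HunekeJorgensen03}.

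Your approach instead avoids invoking the AB property as a black box and runs an induction on the codimension $c$, peeling off one defining equation at a time. The content of your inductive step is the change-of-rings long exact sequence for $R'\to R=R'/(f)$: once $\tor^{R'}_{\ge 2}(M,N)=0$ is secured (by passing to the $R'$-syzygy of $M$ and applying the inductive hypothesis), the operator $\chi_f\colon\tor^R_{j+2}(M,N)\to\tor^R_j(M,N)$ is an isomorphism for all $j\ge 1$, and eventual vanishing then forces vanishing everywhere. This is essentially a direct verification, in the MCM situation, of the phenomenon that the AB theorem encapsulates. Your argument is longer and requires more bookkeeping (the reduction to $R$ complete, the depth count showing $\zz_{R'}M$ is MCM, the two-row spectral sequence), but it is self-contained and does not rely on the Huneke--Jorgensen result. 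Two minor remarks: the reduction via Lemmas~\ref{supvar2.7} and~\ref{supvar2.6} is really justified by faithful flatness (Tor and depth are preserved), not just by the support statements in those lemmas; and the parenthetical ``or zero, in which case $M$ is $R'$-free'' cannot occur for $M\neq 0$, since $f$ annihilates $M$.
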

  
\begin{proof}

If not, then for every $n\in \NN$, there exists a cosyzygy $M'$ of $M$ such that $\tor_{n}(M',N)\ne 0$ and $\tor_{\gg 0}(M',N)=0$.  But this contradicts the fact that complete intersections are AB; see \cite[Corollary 3.4]{{HunekeJorgensen03}}.

\end{proof}

We may also use Theorem \ref{supvarmain1} to construct modules with linear cohomological supports. 

\begin{corollary}

Assume that $k$ is algebraically closed and $R$ is complete.  Set $p_i=(0,\dots,1,\dots,0)\in\PPP^{c-1}_k$ be the point that is one in the $i$th position and zeros elsewhere.  Let $L$ be the affine span of $p_1,\dots,p_n$.  Set 
\[X_i=\frac{\zz^{d-1}_{Q/(f_i)} k}{(\zz^{d-1}_{Q/(f_i)} k)(f_1,\dots,\hat{f_i},\dots, f_c)}\]
 where $d=\dim Q$.  Then $X_1\tns\cdots \tns X_n$ is maximal Cohen-Macaulay and $L=V^*(X_1\tns\cdots \tns X_n)$.

\end{corollary}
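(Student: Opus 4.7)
The plan is to prove, by induction on $m\le n$, the stronger statement that $Y_m := X_1 \tns \cdots \tns X_m$ is maximal Cohen-Macaulay over $R$ and that $V^*(Y_m)$ equals the projective linear span of $p_1,\dots,p_m$. By the discussion in Subsection~\ref{supvarjigglyopo}, the join of two linear subspaces of $\PPP^{c-1}_k$ is again a linear subspace, namely their span, so this linear span is exactly the iterated join of the points. Taking $m = n$ will give the conclusion, since $L$ is by definition this span.

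The base case $m=1$ is essentially a restatement of Lemma~\ref{supvar1.2}. Indeed, $X_i$ is built by the identical recipe as the module $L$ in the proof of that lemma, except with $f_1$ and $f_i$ exchanging roles: the hypersurface $Q/(f_i)$ plays the part of $T=Q/(f_1)$, and the regular sequence $f_1,\dots,\hat{f_i},\dots,f_c$ plays the part of $f_2,\dots,f_c$. Running the proof of Lemma~\ref{supvar1.2} verbatim with these substitutions shows $X_i\in\MCM$ and $V^*(X_i)=p_i$.

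For the inductive step, assume $Y_m$ is MCM with $V^*(Y_m)=\spn(p_1,\dots,p_m)$. Because $p_{m+1}$ is the coordinate point with $1$ in the $(m+1)$-st slot, it does not lie in $\spn(p_1,\dots,p_m)$, so $V^*(Y_m)\cap V^*(X_{m+1})=\emptyset$. Theorem~\ref{supvar2.9} then gives $\tor^R_{\gg 0}(Y_m,X_{m+1})=0$, and since both factors are MCM, Lemma~\ref{supvar3.4} upgrades this to $\tor^R_{>0}(Y_m,X_{m+1})=0$. I may now invoke Theorem~\ref{supvarmain1} to conclude
\[
V^*(Y_{m+1})=\join\bigl(V^*(Y_m),V^*(X_{m+1})\bigr)=\join\bigl(\spn(p_1,\dots,p_m),\{p_{m+1}\}\bigr)=\spn(p_1,\dots,p_{m+1}),
\]
the final equality again using that the join of two linear subspaces is their linear span.

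The step I expect to be the main technical obstacle is verifying that $Y_{m+1}=Y_m\tns X_{m+1}$ is again maximal Cohen-Macaulay. Here I would invoke the Auslander depth formula, which is valid for any Tor-independent pair over a complete intersection (and more generally over an AB ring) by Huneke--Jorgensen: combining $\tor^R_{>0}(Y_m,X_{m+1})=0$ with $\depth Y_m=\depth X_{m+1}=\depth R$ yields
\[
\depth(Y_m\tns X_{m+1})=\depth Y_m+\depth X_{m+1}-\depth R=\dim R,
\]
so $Y_{m+1}$ is MCM, closing the induction.
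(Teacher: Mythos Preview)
Your proof is correct and follows essentially the same inductive strategy as the paper: compute $V^*(X_i)=p_i$, then at each step use disjointness of supports plus Lemma~\ref{supvar3.4} to get $\tor_{>0}=0$, and apply Theorem~\ref{supvarmain1}. The only real difference is that you explicitly justify the MCM property of $Y_{m+1}$ via the depth formula, whereas the paper simply asserts it; your version is therefore slightly more complete.
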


Note that by Proposition \ref{supvar2.8}, after changing coordinate any linear space of $\PPP_k^{c-1}$ is of the form of $L=V^*(X_1\tns\cdots \tns X_n)$.  

\begin{proof}

By Theorem \ref{supvar2.2}, $V^*(X_i)=\{p_i\}$.  We work by induction on $n$.  When $n=1$, we are done.  So assume the statement is true for $n-1$.   Let $L'$ be the affine span of $p_1,\dots,p_{n-1}$.  The induction hypothesis implies that $V^*(X_{n-1})=L'$.    Since $X_n$ is maximal Cohen-Macaulay ,  $\tor_{>0}(X_1\tns\cdots \tns X_{n-1}, X_n)=0$ by Lemma \ref{supvar3.4}.  Then $X_1\tns\cdots \tns X_n$ is maximal Cohen-Macaulay and by Theorem \ref{supvarmain1}, we have
\[V^*(X_1\tns\cdots \tns X_n)=\join(V^*(X_1\tns\cdots \tns X_{n-1}),V^*(X_n))=\join(L',p_n)=L\]
proving the claim.

\end{proof}

The main result of this paper also prevents certain tor modules from vanishing.  

\begin{corollary}

Suppose  $M_1,\dots,M_{c+1}$ are nonfree maximal Cohen-Macaulay modules.  Then for some $i\in \{1,\dots, c\}$, 
\[\tor_n(M_1\tns\cdots\tns M_i,M_{i+1})\ne 0\]
 for infinitely many $n$.  

\end{corollary}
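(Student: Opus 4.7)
The plan is to argue by contradiction using dimension counting on an iterated join. Suppose that for every $i \in \{1,\dots,c\}$ we have $\tor_n(M_1\tns\cdots\tns M_i,M_{i+1})=0$ for all $n\gg 0$. I will show by induction on $i$ that $M_1\tns\cdots\tns M_i$ is maximal Cohen-Macaulay and that $\tor_{>0}(M_1\tns\cdots\tns M_i,M_{i+1})=0$. The base case is the assumption on $M_1$. For the step, the inductive hypothesis plus the fact that $M_{i+1}$ is MCM lets me upgrade the asymptotic vanishing to $\tor_{>0}=0$ via Lemma~\ref{supvar3.4}; the depth formula over a complete intersection then gives $\depth(M_1\tns\cdots\tns M_{i+1})=\depth R$, so the tensor product remains MCM.

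With Tor-independence at every stage, iterated application of Theorem~\ref{supvarmain1} yields
\[V^*(M_1\tns\cdots\tns M_{c+1})=\join\bigl(V^*(M_1),\join(V^*(M_2),\dots,\join(V^*(M_c),V^*(M_{c+1}))\cdots)\bigr).\]
Moreover, Theorem~\ref{supvar2.9} applied at each step forces the pairs $V^*(M_1\tns\cdots\tns M_i)$ and $V^*(M_{i+1})$ to be disjoint, since $\tor_{>0}=0$ in particular implies $\tor_{\gg 0}=0$.

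Now I exploit Theorem~\ref{2.13}: for disjoint closed sets the join satisfies $\dim\join(U,V)=\dim U+\dim V+1$. Each $M_j$ is nonfree and MCM, so by Auslander--Buchsbaum $\pd M_j=\infty$, hence by Corollary~\ref{supvar2.3} the set $V^*(M_j)$ is nonempty and has dimension at least $0$. Iterating the join dimension formula across the $c+1$ modules produces
\[\dim V^*(M_1\tns\cdots\tns M_{c+1})=\sum_{j=1}^{c+1}\dim V^*(M_j)+c\ \ge\ c.\]
This contradicts the ambient inclusion $V^*(-)\subseteq \PPP^{c-1}_{\tilde k}$, which forces $\dim V^*(M_1\tns\cdots\tns M_{c+1})\le c-1$.

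The only subtlety I expect is keeping the inductive bookkeeping clean: I need to ensure that at each stage both the Tor-vanishing and the MCM property propagate so that Theorem~\ref{supvarmain1} and Theorem~\ref{supvar2.9} actually apply. Once that inductive scaffolding is in place, the contradiction is purely a dimension count in $\PPP^{c-1}_{\tilde k}$, and no further estimate is needed.
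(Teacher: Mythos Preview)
Your proof is correct and follows essentially the same contradiction strategy as the paper: upgrade $\tor_{\gg 0}=0$ to $\tor_{>0}=0$ via Lemma~\ref{supvar3.4}, then apply Theorem~\ref{supvarmain1} iteratively to identify $V^*(M_1\tns\cdots\tns M_{c+1})$ as an iterated join, and finish with a dimension count in $\PPP^{c-1}_{\tilde k}$. The only notable difference is in the final step: the paper tracks an explicit linear subspace of dimension $i-1$ inside $V^*(M_1\tns\cdots\tns M_i)$ (by joining a point of $V^*(M_{i+1})$ to the previously constructed linear space), whereas you invoke Theorem~\ref{2.13} directly to obtain $\dim V^*(M_1\tns\cdots\tns M_{c+1})=\sum_j\dim V^*(M_j)+c\ge c$. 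Your route is slightly cleaner, since it avoids the auxiliary linear-space bookkeeping; the paper's argument is a bit more constructive. Note also that your verification that $M_1\tns\cdots\tns M_i$ remains MCM via the depth formula is correct but unnecessary: Lemma~\ref{supvar3.4} only requires \emph{one} of the two modules to be MCM, and $M_{i+1}$ always is by hypothesis.
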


\begin{proof}

Proceeding by contradiction, suppose that 
\[\tor_{\gg 0}(M_1\tns\cdots\tns M_i,M_{i+1})=0\]
for each $1\le i\le c$.  Inducting on $i$, we will show two statements: first that
\[V^*(M_1\tns\cdots\tns M_i)=\join(V^*(M_1),\dots,V^*(M_i))\] 
for each $i$ in $\{1,\dots,c\}$, and second that $V^*(M_1\tns \dots\tns M_i)$ contains a linear space of dimension $i-1$.  When $i=1$, the statement is trivial.  So suppose the statement is true for $i$.  Since $R$ is a complete intersection and each $M_{i+1}$ is maximal Cohen-Macaulay, Lemma \ref{supvar3.4} implies that $\tor_{>0}(M_1\tns\cdots\tns M_i,M_{i+1})=0$.  By Theorem \ref{supvarmain1}, it follows that 
\begin{align*}
V^*(M_1\tns\cdots\tns M_i\tns M_{i+1})&=\join((V^*(M_1),\dots,V^*(M_i)),V^*(M_{i+1}))\\
&=\join(V^*(M_1),\dots,V^*(M_{i+1}))
\end{align*}
Furthermore, let $L$ be the dimension $i$ linear space in $V^*(M_1\tns\cdots\tns M_i)$ guaranteed by the induction hypothesis.  Take $x\in V^*(M_{i+1})$ which exists since $M_{i+1}$ is not free.  Now $x$ is not in $L$ and so 
\[\join(L,x)\sbe V^*(M_1\tns\cdots\tns M_{i+1}).\]  But $\join(L,x)$ is a linear space of dimension $i+1$, proving the claim.  

Now the contradiction is clear, for there is a $c$-dimensional linear space contained in $V^*(M_1\tns\cdots\tns M_{c+1})$ which is a closed subset of $\PPP^{c-1}$.

\end{proof}

It turns out that we can prove an analogue of Theorem \ref{supvarmain1} using Ext.

\begin{theorem}\label{main4}

Suppose $R$ is a complete intersection ring and $M,N\in\md(R)$.  If $\ext^{>0}(M,N)=0$, then $V^*(\hm(M,N))=\join(V^*(M),V^*(N))$.

\end{theorem}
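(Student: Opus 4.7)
The plan is to reduce the Ext version to the Tor version (Theorem \ref{supvarmain1}) by invoking the duality $M \mapsto M^{*} := \hm(M,R)$ on maximal Cohen-Macaulay modules, which is available because $R$, being a complete intersection, is Gorenstein. After using Lemma \ref{supvar2.7} and Lemma \ref{supvar2.6} to reduce to the case in which $R$ is complete with algebraically closed residue field, I would induct on $\alpha(M,N) = 2\depth R - \depth M - \depth N$, with the base case $\alpha = 0$ corresponding to $M,N$ both MCM.

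For the base case, since $R$ is Gorenstein, $M^{*}$ is MCM with $V^{*}(M^{*}) = V^{*}(M)$. I claim the hypothesis $\ext^{>0}(M,N)=0$ upgrades to
\[ \hm(M,N) \;\cong\; M^{*}\tns_{R} N \quad \text{and} \quad \tor^{R}_{>0}(M^{*},N)=0. \]
To see this, take a free resolution $F_{\bullet}\to M$ and set $F^{\bullet} := \hm(F_{\bullet},R)$. Because $M$ is MCM over Gorenstein $R$, $\ext^{i}(M,R)=0$ for $i>0$, so $F^{\bullet}$ is a bounded-below complex of finitely generated free modules with cohomology concentrated in degree $0$ and equal to $M^{*}$. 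Hence $F^{\bullet}\simeq M^{*}$ in the derived category, and being a bounded-below complex of free modules, $F^{\bullet}\tns N$ computes $M^{*}\tns^{\mathbf{L}} N$. But $F^{\bullet}\tns N$ is literally $\hm(F_{\bullet},N)$, whose cohomology is $\ext^{*}(M,N)$---concentrated in degree $0$ with value $\hm(M,N)$ by hypothesis. Matching cohomology in the two computations yields both claims (the isomorphism at $H^{0}$, and the Tor-vanishing from the trivial vanishing of $H^{<0}$ for the bounded-below $F^{\bullet}\tns N$). Theorem \ref{supvarmain1} applied to the Tor-independent pair $(M^{*},N)$ then gives
\[ V^{*}(\hm(M,N)) = V^{*}(M^{*}\tns N) = \join(V^{*}(M^{*}),V^{*}(N)) = \join(V^{*}(M),V^{*}(N)). \]

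For the inductive step, assume $\alpha(M,N)>0$ and, without loss of generality, that $M$ is not MCM. I first observe that this forces $\pd M = \infty$ and hence $V^{*}(M)\neq\emptyset$: if $\pd M = p$ were finite then $p = \depth R - \depth M > 0$, and the minimal free resolution of $M$ would show that $\ext^{p}(M,N)$ surjects onto $(N/\mm N)^{\beta_{p}(M)}\neq 0$, contradicting $\ext^{>0}(M,N)=0$. Applying $\hm(-,N)$ to $0 \to \zz M \to R^{n} \to M \to 0$ and using $\ext^{1}(M,N)=0$ produces the short exact sequence $0 \to \hm(M,N) \to N^{n} \to \hm(\zz M,N) \to 0$, while dimension shifting yields $\ext^{>0}(\zz M,N)=0$. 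Since $\alpha(\zz M,N)<\alpha(M,N)$, the inductive hypothesis supplies $V^{*}(\hm(\zz M,N)) = \join(V^{*}(M),V^{*}(N))$. Proposition \ref{supvar2.4} applied to the short exact sequence then gives both $V^{*}(\hm(M,N)) \subseteq \join(V^{*}(M),V^{*}(N))$ and $\join(V^{*}(M),V^{*}(N)) \setminus V^{*}(N) \subseteq V^{*}(\hm(M,N))$. For the reverse inclusion, Theorem \ref{supvar2.9} gives $V^{*}(M)\cap V^{*}(N)=\emptyset$; by Lemma \ref{2.15} and Theorem \ref{2.13} no irreducible component of the join is contained in $V^{*}(N)$, so $\join(V^{*}(M),V^{*}(N)) \setminus V^{*}(N)$ is dense in the join, and closedness of $V^{*}(\hm(M,N))$ finishes the argument.

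The main obstacle is the MCM identification in the base case. The key observation is that both the isomorphism $\hm(M,N) \cong M^{*}\tns N$ and the Tor-vanishing $\tor_{>0}(M^{*},N)=0$ fall out of a single derived-category comparison: the complex $F^{\bullet}\tns N$ computes both $\ext^{*}(M,N)$ (vanishing above $0$ by hypothesis) and $\tor_{-*}(M^{*},N)$ (vanishing below $0$ by boundedness of $F^{\bullet}$). Once this package is in place, the geometric content of the theorem is just that of Theorem \ref{supvarmain1} transported through Gorenstein duality.
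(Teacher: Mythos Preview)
Your overall strategy---reduce to Theorem \ref{supvarmain1} via Gorenstein duality $M\mapsto M^{*}$---is exactly the paper's, but the derived-category step at the heart of your base case is not valid. You assert that since $F^{\bullet}:=\hm(F_{\bullet},R)$ is a bounded-below complex of free modules quasi-isomorphic to $M^{*}$, the complex $F^{\bullet}\tns N$ computes $M^{*}\tns^{\mathbf L}N$. But ``bounded below'' here is in \emph{cohomological} grading; in homological grading $F^{\bullet}$ is bounded \emph{above}, and bounded-above complexes of projectives are generally neither K-projective nor K-flat. Concretely, take $R=k[x]/(x^{2})$ and $M=N=k$: then $F^{\bullet}\tns N$ has cohomology $k$ in every degree $\ge 0$ (these are the $\ext^{i}(k,k)$), whereas $M^{*}\tns^{\mathbf L}N$ has homology $k$ in every degree $\ge 0$ (these are the $\tor_{i}(k,k)$); the two complexes are not isomorphic in $D(R)$. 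Thus you cannot read off $\tor_{>0}(M^{*},N)=0$ from the trivial vanishing of $H^{<0}(F^{\bullet}\tns N)$, nor identify $\hm(M,N)$ with $M^{*}\tns N$ by matching $H^{0}$'s. If your argument worked as stated it would prove $\tor_{>0}(M^{*},N)=0$ for \emph{every} MCM $M$ and every $N$, which is false.

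The paper closes this gap differently. First, it invokes \cite[Lemma~2.5]{Araya12} to see immediately that $\ext^{>0}(M,N)=0$ forces $M$ to be MCM, so no induction on $\alpha(M,N)$ is needed (and note your ``without loss of generality $M$ is not MCM'' is illegitimate, since $M$ and $N$ play asymmetric roles; fortunately your base-case argument never actually uses that $N$ is MCM). Second, rather than a K-projectivity claim, the paper works through the Auslander--Bridger transpose: from $\ext^{\gg0}(M,N)=0$ one gets $\tor_{\gg0}(\tr M,N)=0$, and since $\tr M$ is MCM, Lemma \ref{supvar3.4} upgrades this to $\tor_{>0}(\tr M,N)=0$; then the standard four-term sequence $\tor_{2}(\tr M,N)\to M^{*}\tns N\to\hm(M,N)\to\tor_{1}(\tr M,N)\to 0$ collapses to the isomorphism $M^{*}\tns N\cong\hm(M,N)$. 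The same mechanism (disjoint supports, then Lemma \ref{supvar3.4} applied to the MCM module $M^{*}$) yields $\tor_{>0}(M^{*},N)=0$, and Theorem \ref{supvarmain1} finishes. Your argument can be repaired along these lines---for instance by showing each cokernel in $0\to M^{*}\to F^{0}\to F^{1}\to\cdots$ is MCM with support $V^{*}(M)$ and then using Lemma \ref{supvar3.4} to make $-\tns N$ exact on the whole coresolution---but the claimed quasi-isomorphism $F^{\bullet}\tns N\simeq M^{*}\tns^{\mathbf L}N$ is not available.
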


\begin{proof}

Since $\ext^{>0}(M,N)=0$, \cite[Lemma 2.5]{Araya12} implies that $M$ is maximal Cohen-Macaulay.   By \cite[Proposition 3.6]{Hartshorne98} and \cite[2.1.1]{Jorgensen08}, we have the following exact sequence
\[\tor_2(\tr M,N)\to \hm(M,R)\tns N\to \hm(M,N)\to \tor_1(\tr M,N)\to 0\]
where $\tr M$ is the Auslander-Bridger transpose of $M$.  However, since $R$ is Gorenstein, $\tr M$ is an inverse syzygy of $M$, i.e. there exists an exact sequence
\[0\to M\to F\to G\to \tr M\to 0\]
with $F,G$ free.  So $\tor^{\gg 0}(\tr M,N)=0$.  Since $\tr M$ is again Cohen-Macaulay, it follows from Lemma \ref{supvar3.4} that $\tor_{>0}(\tr M,N)=0$.  Therefore, the above short exact sequence gives us the isomorphism $M^*\tns N\cong \hm(M,N)$.  

Since $V^*(M)=V^*(M^*)$, we know that $\tor_{\gg 0}(M^*,N)=0$.  Since $M$ is maximal Cohen-Macaulay, Lemma \ref{supvar3.4} implies $\tor_{>0}(M^*,N)=0$.  Therefore, by Theorem \ref{supvarmain1} we have
\[V^*(\hm(M,N))=V^*(M^*\tns N)=\join(V^*(M^*),V^*(N))=\join(V^*(M),V^*(N)).\]

\end{proof}

This result provides an elementary proof of the following result.

\begin{corollary}

If $\ext^{>0}(M,N)=0$, then $\cx\hm(M,N)=\cx M+\cx N$.

\end{corollary}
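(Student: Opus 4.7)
The plan is to combine Theorem \ref{main4} with the dimension formula for joins (Theorem \ref{2.13}) and the complexity-dimension identity (Proposition \ref{2.14}). The result will fall out by taking dimensions of both sides of the equality of cohomological supports.

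First, I would invoke Theorem \ref{main4} directly to obtain $V^*(\hm(M,N)) = \join(V^*(M), V^*(N))$. Next, I would observe that the hypothesis $\ext^{>0}(M,N) = 0$ implies in particular $\ext^{\gg 0}(M,N) = 0$, so Theorem \ref{supvar2.9} gives $V^*(M) \cap V^*(N) = \emptyset$. This disjointness is exactly what is needed to apply Theorem \ref{2.13} and conclude
\[\dim \join(V^*(M), V^*(N)) = \dim V^*(M) + \dim V^*(N) + 1.\]
Finally, applying Proposition \ref{2.14} to translate each dimension into a complexity yields
\[\cx\hm(M,N) - 1 = (\cx M - 1) + (\cx N - 1) + 1,\]
and rearranging gives the desired $\cx\hm(M,N) = \cx M + \cx N$.

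There is essentially no obstacle, since all the heavy lifting is done by Theorem \ref{main4}. The only subtlety is the degenerate case where $\pd M < \infty$ or $\pd N < \infty$, so that one of $V^*(M), V^*(N)$ is empty. Here the convention $\join(U, \emptyset) = U$ from Remark \ref{supvar2.13} together with the convention $\dim \emptyset = -1$ (matching $\cx = 0$ via Proposition \ref{2.14}) makes the arithmetic still come out correctly: if, say, $V^*(M) = \emptyset$, then $V^*(\hm(M,N)) = V^*(N)$ and $\cx\hm(M,N) = \cx N = 0 + \cx N = \cx M + \cx N$, so the formula holds uniformly. The case $M = 0$ or $N = 0$ is trivial. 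Thus the corollary reduces to a brief one-line computation after the two main theorems are in place.
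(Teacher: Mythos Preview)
Your proposal is correct and follows essentially the same approach as the paper's proof, which is a one-line computation invoking Theorem \ref{main4}, Theorem \ref{2.13}, and Proposition \ref{2.14}. You have simply made explicit the disjointness step (via Theorem \ref{supvar2.9}) and the degenerate cases that the paper leaves implicit.
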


\begin{proof}

It follows from the previous theorem, Theorem \ref{2.13}, and Proposition \ref{2.14} that
\[\cx\hm(M,N)=\dim \join(V^*(M),V^*(N))+1=\dim V^*(M)+\dim V^*(N) +2=\cx M+\cx N.\]

\end{proof}

\section{Vanishing of Tor}\label{supvarvanishing}

In this section, we prove the following theorem which gives sufficient conditions for the vanishing of Tor modules. The novelty here is that these conditions involve dimension instead of depth.  We then apply this result to Theorem \ref{supvarmain1}.

\begin{theorem}\label{supvarmain3}

Suppose $(R,\mm,k)$ is an AB ring and $M$ and $N$ are Cohen-Macaulay (but not necessarily maximal Cohen-Macaulay).  Then if $\tor_{\gg 0}(M,N)=0$ and 
\[\dim M\tns N+\dim R\le \dim M+\dim N\]
then $\tor_{>0}(M,N)=0$.    

\end{theorem}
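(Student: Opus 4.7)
The plan is to combine two ingredients: subadditivity of height in a Cohen--Macaulay ring, to turn the given dimension hypothesis into a depth inequality, and the sharp Tor-vanishing bound that the AB hypothesis provides, which will reduce matters to the maximal Cohen--Macaulay case handled by Lemma~\ref{supvar3.4}.

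As setup, I would record that an AB ring is Gorenstein, hence Cohen--Macaulay, so the Cohen--Macaulay assumption on $M$ and $N$ gives $\dpt M = \dim M$ and $\dpt N = \dim N$. Because $\supp(M \tns N) = V(\ann M + \ann N)$, subadditivity of height in a Cohen--Macaulay local ring yields
\[\dim M + \dim N \;\le\; \dim(M \tns N) + \dim R,\]
and combined with the hypothesis of the theorem this is an equality. Using $\dpt(M \tns N) \le \dim(M \tns N)$ together with $\dpt M = \dim M$ and $\dpt N = \dim N$, the equality rewrites to the depth inequality
\[\dpt R + \dpt(M \tns N) \;\le\; \dpt M + \dpt N.\]

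The central step is then to invoke the Huneke--Jorgensen Tor-vanishing bound for AB rings: whenever $\tor_{\gg 0}(M, N) = 0$, one has $\tor_i(M, N) = 0$ for every $i > \max\{0,\, \dpt R - \dpt M - \dpt N + \dpt(M \tns N)\}$. The depth inequality above forces this maximum to be $0$, so $\tor_{>0}(M, N) = 0$, as desired. The hard part, I expect, will be to pin down the exact form of this bound over an arbitrary AB ring; a backup plan is to pass to sufficiently high syzygies $\zz^{s}M$ and $\zz^{t}N$, both MCM over the Gorenstein ring $R$, apply Lemma~\ref{supvar3.4} to these to obtain an initial vanishing range for $\tor_i^R(M, N)$, and then narrow that range to $\{0\}$ by iterating a cosyzygy argument in the spirit of the proof of Lemma~\ref{supvar3.4}, using the depth inequality just established to rule out any residual nonvanishing indices.
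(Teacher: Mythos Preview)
Your argument has two issues, one minor and one substantive.

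First, the minor one: ``subadditivity of height in a Cohen--Macaulay local ring'' is false in general. For instance, in $R = k[[x,y,u,v]]/(xu-yv)$ the primes $(x,y)$ and $(u,v)$ each have height~$1$, but their sum is the maximal ideal of height~$3$. Fortunately you do not actually need this: the theorem's hypothesis $\dim(M\otimes N)+\dim R\le \dim M+\dim N$ already gives, together with $\depth(M\otimes N)\le\dim(M\otimes N)$ and the CM equalities $\depth M=\dim M$, $\depth N=\dim N$, $\depth R=\dim R$, the depth inequality $\depth R+\depth(M\otimes N)\le \depth M+\depth N$ that you want. So this step survives, but drop the subadditivity claim.

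The substantive gap is your ``Huneke--Jorgensen Tor-vanishing bound''. No such statement appears in \cite{HunekeJorgensen03}: what that paper gives over an AB ring is only the coarse bound $\tor_{>\dim R}(M,N)=0$ (this is exactly Lemma~\ref{3.2}). A bound of the form $q\le \depth R-\depth M-\depth N+\depth(M\otimes N)$ for $q=\sup\{i:\tor_i(M,N)\neq 0\}$ would require the \emph{derived} depth formula $\depth(M\otimes^{\mathbf L}N)=\depth M+\depth N-\depth R$ together with a Foxby-type inequality relating $q$ to $\depth(M\otimes N)$, and even granting the derived depth formula (which is a later result, not in \cite{HunekeJorgensen03}) it is not clear that those two ingredients combine to give the inequality in the direction you need. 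Your backup plan does not close the gap either: you propose to take cosyzygies ``in the spirit of Lemma~\ref{supvar3.4}'', but cosyzygies over $R$ are only available for MCM modules, and $N$ is not MCM here.

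The paper proceeds quite differently. It uses the dimension hypothesis directly, not to produce a depth inequality, but to construct a regular sequence $x_1,\dots,x_n\in\ann N$ that is simultaneously regular on $M$ and on $R$ and such that $N$ is MCM over $\bar R=R/(x_1,\dots,x_n)$. This is the key lemma, proved by an elementary prime-avoidance induction. Once $N$ is MCM over the Gorenstein ring $\bar R$, cosyzygies $N^i$ of $N$ exist over $\bar R$; since $\underline{x}$ is regular on $M$ one has $\tor^R_{>0}(M,\bar R)=0$, and the short exact sequences $0\to N^i\to \bar R^{m_i}\to N^{i+1}\to 0$ yield $\tor^R_j(M,N)\cong\tor^R_{j+t}(M,N^t)$ for every $j\ge 1$ and $t\ge 0$. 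Now the only AB input needed is the coarse Lemma~\ref{3.2}: taking $t>\dim R$ kills the right-hand side. The dimension hypothesis is thus used structurally (to manufacture the quotient $\bar R$), rather than numerically as a depth inequality.
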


 Although we are primarily interested in complete intersection rings, we prove the theorem in terms of AB rings for the sake of generality. Since complete intersection rings are AB, the result specializes to the desired context.   We refer the reader to  \cite{HunekeJorgensen03} for the definition and basic facts about AB rings.  The only fact about AB rings we use is the following.  

\begin{lemma}\label{3.2}

Suppose $R$ is AB.  If $\tor_{\gg 0}(M,N)=0$, then we have $\tor_{>\dim R}(M,N)=0$.

\end{lemma}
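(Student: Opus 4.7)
The plan is to reduce the assertion to the maximal Cohen--Macaulay case by a syzygy shift, and then invoke the content of \cite[Corollary~3.4]{HunekeJorgensen03} (the fact underlying the MCM-version in Lemma~\ref{supvar3.4}): over an AB ring, if one argument of $\tor$ is maximal Cohen--Macaulay and $\tor_{\gg 0}(M,N)=0$, then in fact $\tor_{>0}(M,N)=0$. The bound $\dim R$ in our statement will then pop out naturally from the number of syzygies required to reach the MCM regime.

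Concretely, since AB rings are Gorenstein, and in particular Cohen--Macaulay, we have $\depth R = \dim R$. Set $k = \dim R - \depth M \geq 0$ and let $M' = \zz^k M$ be a $k$-th syzygy of $M$. A standard depth count gives $\depth M' = \depth M + k = \dim R$, so $M'$ is maximal Cohen--Macaulay. The usual shift formula yields $\tor_i^R(M',N) \cong \tor_{i+k}^R(M,N)$ for all $i \geq 1$, so the hypothesis $\tor_{\gg 0}(M,N)=0$ transfers verbatim to $\tor_{\gg 0}(M',N)=0$. Applying the MCM vanishing fact to the pair $(M',N)$ gives $\tor_{>0}(M',N)=0$, and shifting back this reads $\tor_{>k}(M,N)=0$. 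Since $k = \dim R - \depth M \leq \dim R$, the desired conclusion $\tor_{>\dim R}(M,N)=0$ follows.

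There is no essential obstacle here; the only things to check are boundary cases, all of which are trivial. If $M$ is already maximal Cohen--Macaulay then $k=0$ and the MCM statement applies directly. If $M$ has finite projective dimension, then $\zz^k M$ is free for $k$ large enough, and the vanishing is automatic. The one conceptual point worth emphasizing is that this argument is genuinely cleaner than repeating the cosyzygy/contradiction argument of Lemma~\ref{supvar3.4} from scratch: by paying for a single syzygy reduction up front, one converts the qualitative "eventually zero" conclusion for MCM modules into the quantitative uniform bound $\dim R$ for arbitrary finitely generated $M$ and $N$.
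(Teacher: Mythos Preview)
The paper does not supply a proof of this lemma; it is stated as a known fact about AB rings, the relevant theory being developed in \cite{HunekeJorgensen03}. Your argument is correct and is a natural way to obtain the explicit bound $\dim R$: syzygy-shift $M$ into the maximal Cohen--Macaulay range (using that AB rings are Gorenstein, hence Cohen--Macaulay, so $k=\dim R-\depth M$ steps suffice), apply the MCM vanishing result, and shift back.

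Two minor remarks. First, the equality $\depth M'=\depth M+k$ you assert is in general only the inequality $\depth\zz^k M\ge\min(\depth R,\depth M+k)$; but for $k=\dim R-\depth M$ this still forces $\depth M'=\dim R$, so the conclusion that $M'$ is maximal Cohen--Macaulay stands. Second, \cite[Corollary~3.4]{HunekeJorgensen03}, as cited in this paper, is the statement that complete intersections are AB, not the MCM vanishing fact itself; what you actually need is the \emph{argument} of Lemma~\ref{supvar3.4}, which uses only the existence of \emph{some} uniform Tor-vanishing bound (the defining property of AB rings, transferred to $\tor$ via Gorenstein duality) together with cosyzygies of MCM modules, and therefore applies verbatim over any AB ring. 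There is no circularity: the MCM case presupposes only the qualitative AB bound, and your syzygy reduction then supplies the quantitative refinement to $\dim R$.
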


The key to the proof of Theorem \ref{supvarmain3}, is the following observation.

\begin{lemma}

Suppose $(R,\mm,k)$ is local and $M$, $N$, and $R$ are Cohen-Macaulay.  If 
\[\dim M\tns N+\dim R\le \dim M+\dim N\]
 then there exists a sequence $x_1,\dots,x_n\in \ann N$  which is regular on $M$ and $R$ such that $N$ is maximal Cohen-Macaulay over $R/(x_1,\dots,x_n)$.

\end{lemma}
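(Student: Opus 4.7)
The strategy is to find a regular sequence of length $n := \dim R - \dim N$ inside $\ann N$ that is simultaneously regular on $R$ and on $M$. Once such a sequence $x_1,\dots,x_n$ is in hand, the quotient $R' = R/(x_1,\dots,x_n)$ is Cohen-Macaulay of dimension $\dim N$, the module $N$ inherits an $R'$-structure because each $x_i$ annihilates $N$, and $\depth_{R'} N = \depth_R N = \dim N = \dim R'$, so $N$ is maximal Cohen-Macaulay over $R'$.

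First I would establish two grade estimates for $\ann N$. Since $R$ is Cohen-Macaulay and $R/\ann N$ has dimension $\dim N$, we get $\grade(\ann N, R) = \hgt \ann N = \dim R - \dim N = n$. For $M$, the support identity
\[\supp(M\tns N) = \supp M \cap \supp N = \supp\bigl(M/(\ann N)M\bigr)\]
implies $\dim M/(\ann N)M = \dim M\tns N$ whenever $M\tns N\ne 0$. Since $M$ is Cohen-Macaulay, the standard formula $\grade(I,M) = \dim M - \dim M/IM$ applied to $I = \ann N$ combines with the numerical hypothesis to yield
\[\grade(\ann N, M) = \dim M - \dim M\tns N \ge \dim R - \dim N = n.\]
(In the degenerate case $M\tns N = 0$, the equality $(\ann N)M = M$ forces $M = 0$ by Nakayama, and the lemma is trivial.)

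With both grades at least $n$, I would build $x_1,\dots,x_n$ inductively via prime avoidance. At stage $i$, assuming $x_1,\dots,x_{i-1}\in\ann N$ is regular on both $R$ and $M$, the grade bounds guarantee that $\ann N$ is contained in no associated prime of $R/(x_1,\dots,x_{i-1})$ nor of $M/(x_1,\dots,x_{i-1})M$. Prime avoidance on this finite collection of primes then produces $x_i \in \ann N$ that is a nonzerodivisor on both quotients, completing the induction.

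I do not expect a genuine obstacle. The argument rests on the standard equality $\grade(I,M) = \dim M - \dim M/IM$ for Cohen-Macaulay $M$, which is where the hypothesis $\dim M\tns N + \dim R \le \dim M + \dim N$ enters, plus the routine simultaneous prime-avoidance construction of a regular sequence inside a prescribed ideal. The only subtlety worth double-checking is the degenerate case where $M\tns N$ vanishes, which as noted is automatically handled.
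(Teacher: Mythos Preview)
Your argument is correct. Both your proof and the paper's build the desired sequence of length $n=\dim R-\dim N$ inside $\ann N$ via prime avoidance, but the organization differs. The paper inducts on $\dim R-\dim N$: at each step it either finds a single $x\in\ann N$ regular on $R$ and $M$ and passes to $R/xR$, or else $\ann N$ lies in some associated prime of $R$ or $M$, in which case a direct argument using the hypothesis shows $N$ is already maximal Cohen--Macaulay over the current ring. You instead compute the two grade bounds $\grade(\ann N,R)=n$ and $\grade(\ann N,M)\ge n$ at the outset (the second via the Cohen--Macaulay identity $\grade(I,M)=\dim M-\dim M/IM$ together with $\supp M/(\ann N)M=\supp(M\tns N)$), which guarantees the obstruction case never arises before $n$ elements are found, and then run simultaneous prime avoidance in one pass. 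Your route is a bit more streamlined, as it isolates exactly where the numerical hypothesis enters; the paper's case analysis is more hands-on but amounts to the same content.
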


\begin{proof}

We work by induction on $r=\dim R-\dim N$.  When $r=0$, the statement is trivial. So suppose $r>0$. We divide the proof into two cases.  First, suppose there is an $x\in \ann N$  which is regular on $R$ and $M$.  Since $\dim R/xR=\dim R-1$ and $\dim M/xM=\dim M-1$, we have $\dim (M/xM)\tns N+\dim R/xR\le\dim M/xM+\dim N$.  Thus by induction, there exists a regular sequence $x_2,\dots,x_n$ on $M/xM$ and $R/xR$ such that $N$ is maximal Cohen-Macaulay over $R/(x,x_2,\dots,x_n)$.  Thus $x,x_2,\dots, x_n$ is our desired regular sequence.  

For the second case, suppose there is no  $x\in \ann N$  which is regular on $R$ and $M$. In other words, suppose that 
\[\ann N\sbe \left(\bigcup_{p\in \ass M} p\right)\cup\left(\bigcup_{q\in\ass N} q\right).\]
  Then  $\ann N\sbe p$ with either $p\in \ass R$ or $p\in \ass M$.  If $p\in \ass R=\min R$, then $\dim N=\dim R$, and the empty regular sequence suffices.  So suppose $p\in \ass M$.  Then $p$ is in $\supp M\cap \supp N=\supp M\tns N$, thus $\dim M\tns N\ge \dim R-\h p$.  But since $M$ is Cohen-Macaulay, $\dim M=\dim R -\h p\le \dim M\tns N$.  Since $\dim M\tns N+\dim R\le \dim M+\dim N$, this means that $\dim R\le \dim N$, which implies that $N$ is maximal Cohen-Macaulay, proving the claim.

\end{proof}

\begin{proof}[Proof of Theorem \ref{supvarmain3}]

By the previous lemma, we may let $x_1,\dots, x_n\in\ann N$ be a regular sequence on $M$ and $R$ such that $N$ is maximal Cohen-Macaulay over $R/(x_1,\dots,x_n)$.  Set $\bar{R}=R/(x_1,\dots,x_n)$. Since $\bar{R}$ is Gorenstein, there exists a long exact sequence of the form
\[0\to N\to \bar{R}^{m_0}\xto{\dell^0} \bar{R}^{m_1}\xto{\dell^1} \bar{R}^{m_2}\xto{\dell^2}\cdots\]
Set $N^i=\zz^{-i}_{\bar{R}} N=\ker \dell^i$.   Note that $N^0=N$.   Now $\tor^R_{>0}(M,\bar{R})=0$ since $x_1,\dots,x_n$ is regular on $M$.  Thus, considering the short exact sequence $0\to N^i\to \bar{R}^m\to N^{i+1}\to 0$, we see that $\tor_j(M,N^i)\cong \tor_{j+1}(M,N^{i+1})$. In particular, $\tor_{\gg0}(M,N^i)=0$ for all $i$.  However, Lemma \ref{3.2} implies that $\tor^R_{>\dim R}(M,N^i)=0$ for all $i$.  Thus for all $j>0$, we have 
\[\tor^R_j(M,N)\cong \tor^R_{j+\dim R+1}(M,N^{\dim R+1})=0\] completing the proof.  

\end{proof}

Theorem  \ref{supvarmain3} and Theorem \ref{supvarmain1} give this immediate corollary.

\begin{corollary}

Suppose $R$ is a local complete intersection and $M$ and $N$ are Cohen-Macaulay (but not necessarily maximal Cohen-Macaulay). Then if $V^*(M)\cap V^*(N)=\emptyset$ and $\dim M\tns N+\dim R\le \dim M+\dim N$, then $V^*(M\tns N)=\join(V^*(M),V^*(N))$.  

\end{corollary}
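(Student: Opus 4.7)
The plan is to combine the two main theorems essentially mechanically: Theorem \ref{supvarmain3} will convert the eventual vanishing condition supplied by disjointness of the cohomological supports into vanishing of all positive Tor modules, and then Theorem \ref{supvarmain1} will deliver the join formula. So the corollary is really just an assembly of results already established in the paper.

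First I would observe that a local complete intersection is an AB ring (this is the classical fact recorded in \cite{HunekeJorgensen03} and used implicitly in Lemma \ref{supvar3.4}), so the hypotheses of Theorem \ref{supvarmain3} are met as soon as we produce an eventual Tor-vanishing statement. For that, since $V^*(M)\cap V^*(N)=\emptyset$ by assumption, Theorem \ref{supvar2.9} gives $\tor_{\gg 0}(M,N)=0$. Combining this with the hypothesis $\dim(M\tns N)+\dim R\le \dim M+\dim N$ and the Cohen-Macaulay assumptions on $M$ and $N$, Theorem \ref{supvarmain3} upgrades this to $\tor_{>0}(M,N)=0$.

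Once the full Tor-independence is in hand, I would invoke Theorem \ref{supvarmain1} directly to conclude $V^*(M\tns N)=\join(V^*(M),V^*(N))$. One should briefly comment on the trivial edge cases: if either $M$ or $N$ is zero, then both sides of the asserted equality are empty (using the convention of Remark \ref{supvar2.13}), so the statement is vacuous there; this justifies applying Theorem \ref{supvarmain1}, which requires $M,N\neq 0$.

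There is no real obstacle here beyond citation bookkeeping; the entire content of the corollary lies in the two results it cites. The only point worth double-checking is that Theorem \ref{supvarmain3} is stated for Cohen-Macaulay (not necessarily maximal Cohen-Macaulay) modules, which is exactly the generality the corollary asserts, so no further reduction is needed.
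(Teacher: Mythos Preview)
Your proposal is correct and matches the paper's approach exactly: the paper simply records the corollary as immediate from Theorems \ref{supvarmain3} and \ref{supvarmain1}, and you have spelled out precisely that deduction (disjoint supports $\Rightarrow$ $\tor_{\gg 0}=0$ via Theorem \ref{supvar2.9}, then upgrade to $\tor_{>0}=0$ via Theorem \ref{supvarmain3}, then apply Theorem \ref{supvarmain1}). One small slip in your edge-case remark: by the convention in Remark \ref{supvar2.13} one has $\join(\emptyset,V^*(N))=V^*(N)$, not $\emptyset$, so ``both sides empty'' is not quite right; however the Cohen--Macaulay hypothesis already rules out the zero module, so the point is moot.
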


This corollary gives a relation between the actual support of a module and also the cohomological support.  

\begin{remark}

Theorem  \ref{supvarmain3} can actually be stated in more general terms using AB dimension, which is defined by Araya in \cite[Definition 2.2]{Araya12}.  The application of Lemma \ref{3.2} is the only place in the proof where the AB assumption is used.  However, if we assume that $R$ is Cohen-Macaulay and $\ABdim N<\infty$, then arguments in  \cite[Proposition 3.2, Theorem 3.3]{HunekeJorgensen03} show that the conclusion of Lemma \ref{3.2} still holds.  Therefore, Theorem  \ref{supvarmain3} still holds if we only assume that $R$ is Cohen-Macaulay and $\ABdim N<\infty$.

\end{remark}

\section{Homological Conjectures}

\subsection{Conjectures both new and old}

In this section we introduce some new conjectures. The main motivation comes from Section 5 and some of the famous homological conjectures in commutative algebra. 

\begin{definition}
\ \\
\begin{itemize}
\item[{\bf Strong DI}] The strong dimension inequality holds if $\tor_{\gg 0}(M,N)=0$ implies \[\dim M+\dim N\le \dim R+\dim M\tns N\]  
\item[{\bf DE}] The dimension formula holds, if $\tor_{>0}(M,N)=0$ implies \[\dim M+\dim N=\dim R+\dim M\tns N\]
\item[{\bf Para}] The parameter conjecture holds if $\pd N<\infty$ and $p\in\min R$ implies there exists a $q\in \min N$ such that $p\sbe q$.  This is equivalent to saying that if $x$ is a parameter element on $N$, then it is a parameter element on $R$.
\end{itemize}

\end{definition}

These statements seem to be unstudied.  If $N$ has no embedded primes, then {\bf Para} follows from the zero divisor conjecture.  Serre's intersection conjectures are related to {\bf Strong DI} and {\bf DE}.  Let $\ld$ denote the length function.  

\begin{conjecture}
Suppose $\pd N<\infty$ and $\ld(M\tns N)<\infty$ and set
\[\chi(M,N)=\sum_{i\ge 0} (-1)^i\ld(\tor_i(M,N))\]
The following statements were conjectured by Serre.
\begin{itemize}
\item[{\bf DI}] $\dim M+\dim N\le\dim R$
\item[{\bf Non-negativity}] $\chi(M,N)\ge 0$
\item[{\bf Vanishing}] $\chi(M,N)=0$ whenever $\dim M+\dim N<\dim R$
\item[{\bf Positivity}] $\chi(M,N)>0$ whenever $\dim M+\dim N=\dim R$
\end{itemize}
\end{conjecture}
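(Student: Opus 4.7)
The plan is to attack Serre's four parts in sequence, acknowledging from the start that several are deep results and the last is a famous open problem: \textbf{DI} and \textbf{Vanishing} are established for regular local rings (Serre in equicharacteristic, completed by Roberts in mixed characteristic, with \textbf{Vanishing} also due to Gillet-Soul\'e), \textbf{Non-negativity} is Gabber's theorem via de Jong alterations, and \textbf{Positivity} remains open. My proposal is to exploit the hypothesis $\pd_R N < \infty$ to lift to the regular ambient ring $Q$ in the presentation $R = Q/(\underline{f})$, run each argument there, and then descend to $R$.

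For \textbf{DI}, after reducing to $R$ complete, a rigidity argument should let us lift $N$ to a module of finite projective dimension over $Q$ along the regular sequence $\underline{f}$, giving $\dim M + \dim N \le \dim Q = \dim R + c$ from Serre's original inequality over $Q$. The task is then to sharpen this bound by $c$, using that $\ld(M \tns_R N) < \infty$ forces the intersection into a fiber of $\spec R \to \spec Q$. For \textbf{Vanishing}, the plan is to compare $\chi_R(M,N)$ with the Euler characteristic of a suitable lift over $Q$ and then invoke Gillet-Soul\'e/Roberts; here Theorem \ref{supvarmain3} may help by upgrading eventual Tor vanishing to outright Tor vanishing under the dimension hypothesis, so that the two Euler characteristics agree and we can transport the regular ring statement. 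For \textbf{Non-negativity}, Gabber's alteration argument is ring-theoretic enough to go through once a clean link between $\chi_R$ and $\chi_Q$ is available.

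The main obstacle will be \textbf{Positivity}. One natural hope, given the paper's framework, is that in the proper intersection case the join $\join(V^*(M), V^*(N))$ from Theorem \ref{supvarmain1} could certify positivity; but the hypothesis $\pd_R N < \infty$ forces $V^*(N) = \emptyset$ by Corollary \ref{supvar2.3}, so the join collapses to $V^*(M)$ and the cohomological support approach yields no new information on this side. A more promising direction might be to bound $\chi(M,N)$ from below by a sum of positive local contributions indexed by primes in $\ass M \cap \ass N$, perhaps using a refined local Chern character that captures the complete intersection structure. I expect this final step to remain the genuine bottleneck, and would not be surprised if any complete resolution requires ideas well beyond the scope of this paper; realistically, the most the techniques here can accomplish is a cleaner treatment of the first three parts in the complete intersection case, with \textbf{Positivity} listed as an honest open problem.
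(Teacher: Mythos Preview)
The fundamental issue is that there is nothing to prove here, and the paper does not attempt a proof. The statement is labeled \texttt{conjecture}, not \texttt{theorem}: it is simply a statement of Serre's intersection conjectures, inserted for context so that the authors can compare them with their own \textbf{Strong DI}, \textbf{DE}, and \textbf{Para}. Immediately after stating it, the paper summarizes the known status (Serre for unramified regular local rings; \textbf{DI} for all regular local rings; \textbf{Non-negativity} by Gabber; \textbf{Vanishing} by Gillet--Soul\'e and Roberts; \textbf{Positivity} open) and moves on. There is no ``paper's own proof'' to compare against.

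Your proposal, by contrast, sketches a program to prove all four parts over a complete intersection $R = Q/(\underline{f})$ by lifting to $Q$. Even setting aside that this is not what the paper does, the sketch has genuine gaps. The lifting of $N$ with $\pd_R N < \infty$ to a $Q$-module of finite projective dimension does not come for free and is not justified. The claimed comparison $\chi_R(M,N) = \chi_Q(\text{lifts})$ is asserted but not argued, and Theorem~\ref{supvarmain3} requires both modules Cohen--Macaulay plus a dimension inequality you have not verified. You yourself concede \textbf{Positivity} is open, which already means the proposal cannot be a proof of the conjecture as stated. The honest answer is that this block of the paper is expository background, not a result requiring proof.
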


Serre proved each of these conjectures over unramified regular local rings and showed that {\bf DI} holds for all regular local rings.  For arbitrary regular local rings, {\bf Non-negativity} and {\bf vanishing} were established in \cite{Gabber95} and \cite{GilletSoule85,Roberts85} respectively.  However, {\bf Positivity} is still open in the ramified case.  For singular rings,  {\bf Non-negativity}, {\bf Vanishing}, and {\bf Positivity} are false in general, see \cite{DuttaHochsterMcLaughlin85}.

Obviously,  {\bf DI} is a special case of {\bf Strong DI}.   When  $\tor_{>0}(M,N)=0$, then $\chi(M,N)>0$.  Thus  when $\pd N<\infty$ and $\ld(M\tns N)<\infty$, then {\bf DE} is implied by  Serre's intersection conjectures.  Although {\bf Vanishing}, and {\bf Positivity} are not true in general, no counterexamples to {\bf Strong DI} and {\bf DE} are known to the authors.  The following conjectures are also relevant.

\begin{definition}
\ \\
\begin{itemize}

\item[{\bf PS}] Peskine and Spiro conjectured in \cite{PeskineSzpiro73} that   if $\pd N<\infty$ and $\ld(M\tns N)<\infty$, then \[\dim M\le \grade N\]
\item[{\bf GC}] The grade conjecture asserts that $\grade N+\dim N=\dim R$.  
\item[{\bf Small MAC}] The small Cohen-Macaulay conjecture asserts that every ring $R$ has a finitely generated maximal Cohen-Macaulay module.  

\end{itemize}

\end{definition}

We now list the relations between these conjectures.

\begin{prop}\label{implications}

\begin{enumerate}
\item\label{2} {\bf PS} implies {\bf DI}.
\item\label{3} If {\bf GC} holds, then {\bf DI} implies {\bf PS}.
\item\label{7} Equidimensional implies {\bf GC}.
\item\label{6} Suppose holds {\bf Strong DI} holds when $\pd N\le \infty$.  Then  {\bf Para} holds too.
\item\label{6.5} Suppose $R$ is equidimensional and catenary.  If {\bf PS} holds at every localization of $R$, {\bf Para} holds as well. 
\item\label{8} Suppose {\bf Small MAC} holds for all rings.  If for every $p\in \spec R$  {\bf DE} holds and $R_p$ is equidimensional, then we have {\bf Para}.
\end{enumerate}

When $R$ is equidimensional and catenary, we summarize implications (1)-(\ref{6.5}) with the following diagram.
\[\xymatrix{
						&					& \mbox{{\bf Strong DI}} \ar@{=>}[d]													& \\
						&					& {\substack{\mbox{{\bf Strong DI}} \\  \mbox{assuming } \pd N<\infty}} \ar@{=>}[dl] \ar@{=>}[dr]  	&\\
\mbox{{\bf PS} } \ar@{<=>}[r]	& \mbox{{\bf DI} } 		&																			& \mbox{{\bf Para}} \\		
						&{\substack{ \mbox{{\bf PS}} \\ \mbox{at every}\\ \mbox{localization}}} \ar@{<=>}[r] & {\substack{ \mbox{{\bf DI}} \\ \mbox{at every}\\ \mbox{localization}}} \ar@{=>}[ul] \ar@{=>}[ur] & \\														
}\]
\end{prop}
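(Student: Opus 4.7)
For parts (1)--(3), the common thread is the inequality $\grade N \le \h(\ann N) \le \dim R - \dim N$ (the first is universal; the second comes from concatenating saturated chains of primes through a minimal prime of $\ann N$ of smallest height). Specifically, PS gives $\dim M \le \grade N$, so chaining the above produces DI, giving (1). Conversely, under GC we have $\grade N = \dim R - \dim N$ exactly, so DI becomes $\dim M \le \grade N$, giving (2). For (3), I would argue that $\pd N < \infty$ combined with the finite length of $N_\p$ at any $\p \in \min \supp N$ forces $R_\p$ to be Cohen--Macaulay: Auslander--Buchsbaum gives $\pd_{R_\p} N_\p = \depth R_\p$, the New Intersection Theorem gives $\pd_{R_\p} N_\p \ge \dim R_\p$, and $\depth \le \dim$ forces equality. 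Hence $\grade_R N = \h(\ann N) = \min_{\p \in \min \supp N} \h(\p)$, and equidimensionality together with catenarity (implicitly needed, since without it $\h(\p) + \dim R/\p$ can be strictly less than $\dim R$) converts this into $\dim R - \dim N$.

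For (4), I would test Strong DI on the pair $(R/xR, N)$: the hypothesis $\tor_{\gg 0}(R/xR, N) = 0$ holds automatically since $\pd N < \infty$, and Strong DI yields $\dim R/xR + \dim N \le \dim R + \dim N/xN = \dim R + \dim N - 1$ (using that $x$ is a parameter on $N$), so $\dim R/xR \le \dim R - 1$; combined with Krull's principal ideal theorem this forces equality, making $x$ a parameter on $R$. For (5), suppose Para fails at some $p \in \min R$ with witness $N$, and choose a prime $Q$ minimal over $p + \ann N$. After localizing at $Q$, the module $(R/p)_Q \otimes N_Q = N_Q / p N_Q$ has support $\{QR_Q\}$, so PS at $R_Q$ applies and yields $\dim(R/p)_Q \le \grade_{R_Q} N_Q$. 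Equidimensionality and catenarity of $R$ give $\dim(R/p)_Q = \h(Q/p) = \h(Q) = \dim R_Q$. On the other hand, every minimal prime $q$ of $\ann N$ with $q \sbe Q$ is distinct from $p$ (by the Para failure) and from $Q$ (since $p \sbe Q$ but $p \not\sbe q$), so catenarity gives $\h(q) < \h(Q)$, whence $\grade_{R_Q} N_Q \le \h(\ann N \cdot R_Q) < \dim R_Q$, a contradiction.

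For (6), I would mimic the localization setup of (5): set $T := R_Q$, which is equidimensional by hypothesis, and use Small MAC to produce a maximal Cohen--Macaulay $T/pT$-module $C$; since $\dim T/pT = \dim T$ by equidimensionality, $C$ is also MCM as a $T$-module, with $\supp_T C = V(pT)$. If DE at $T$ may be applied to $(C, N_Q)$, then $\dim C + \dim N_Q = \dim T + \dim(C \otimes N_Q) = \dim T$ (the support of $C \otimes N_Q$ lies in $\{QT\}$), forcing $\dim N_Q = 0$ and hence $Q \in \min N$, which contradicts $p \not\sbe Q$ since $p \sbe Q$ by construction. The main obstacle is justifying $\tor_{>0}^T(C, N_Q) = 0$: a direct application of the Peskine--Szpiro acyclicity lemma to $F_\bullet \otimes_T C$ (for $F_\bullet$ a finite free resolution of $N_Q$) gives the needed depth estimates $\depth(F_i \otimes C) = \dim T \ge i$ for $i \le \pd_T N_Q$, but the Tor modules, all supported in $\{QT\}$, have depth zero and so violate the acyclicity hypothesis. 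A natural workaround is to invoke the already-established New Intersection Theorem in the form of DI, which requires no Tor-vanishing and directly gives $\dim C + \dim N_Q \le \dim T$; this yields $\dim N_Q = 0$ and completes the argument without DE, although using DE as stated in the proposition would require producing a cleaner argument for the vanishing of $\tor_{>0}^T(C, N_Q)$ in this setup.
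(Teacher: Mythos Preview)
Your arguments for (1)--(5) are correct and largely parallel the paper's, with harmless variations: for (3) the paper simply cites a reference while you sketch a direct argument; for (4) you use the parameter-element formulation of \textbf{Para} and test \textbf{Strong DI} on the pair $(R/xR,N)$, whereas the paper uses the minimal-prime formulation and tests it on $(N,R/p)$; for (5) both you and the paper localize at a prime minimal over $p+\ann N$ and apply \textbf{PS} there, the paper concluding directly and you by contradiction.

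The genuine gap is in (6), and it stems from reading the Peskine--Szpiro Acyclicity Lemma backwards. The lemma says: given a complex $G_\bl: 0\to G_s\to\cdots\to G_0$ of finite modules with (a) $\depth G_i\ge i$ for each $i\ge 1$ and (b) for each $i\ge 1$ either $H_i(G_\bl)=0$ or $\depth H_i(G_\bl)=0$, one concludes $H_i(G_\bl)=0$ for all $i\ge 1$. In your setup $G_i=F_i\tns_T C\cong C^{\rank F_i}$ has $\depth G_i=\depth_T C=\dim T\ge \pd_T N_Q\ge i$, so (a) holds; and you yourself observe that each $\tor_i^T(C,N_Q)$ is supported only at $\{QT\}$, hence has depth $0$ when nonzero --- this is precisely hypothesis (b), not a violation of it. The Acyclicity Lemma therefore applies and delivers $\tor_{>0}^T(C,N_Q)=0$, which is exactly what the paper asserts without detail and what lets \textbf{DE} finish the job. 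Your proposed workaround does not save the argument under the stated hypotheses: \textbf{DI} is itself conjectural in this generality, and the New Intersection Theorem gives only $\dim C\le \pd_T N_Q+\dim(C\tns N_Q)=\pd_T N_Q$, which forces $T$ to be Cohen--Macaulay and $\depth_T N_Q=0$, not $\dim_T N_Q=0$.
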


The authors would like to thank Melvin Hochster for bringing (\ref{6.5}) to their attention.  

\begin{proof}

Statement (\ref{2}) follows from the inequality $\grade(N)+\dim N\le \dim R$.  Furthermore, {\bf GC} asserts this inequality is an equality, yielding (\ref{3}).  Statement (\ref{7}) is \cite[Theorem 3.6]{Beder14}.

We now show (\ref{6}).  Let $p\in \min R$.  Take $q\in \min N/pN$ such that $\dim R/q=\dim N/pN$.  Since $\pd N<\infty$, then  {\bf Strong Dimeq} implies
\[\dim N+\dim R/p\le \dim R+\dim N/pN=\dim R+\dim R/q.\]
Since $\dim R=\dim R/p$, this inequality implies $\dim N\le \dim R/q$.  However, since $q$ contains $\ann N$, this implies that $\dim N= \dim R/q$.  It follows that $q$ is a minimal prime of $N$.  

To show (\ref{6.5}), assume that $\pd N<\infty$ and let $p\in\min R$.   Let $q$ be minimal over $p+\ann N$.   Now $\lambda(N_q\tns (R/p)_q)<\infty$, and so if {\bf PS} holds at every localization then $\dim (R/p)_q\le \grade N_q$.  Since $R$ is catenary and equidimensional then $\dim (R/p)_q=\dim R_q$.  Therefore, $\dim R_q\le \grade R_q$, and so $R_q$ is Cohen-Macaulay and $\ann N$ is $qR_q$-primary.  Therefore, $q\in \min N$ as desired.

It remains to prove (\ref{8}).  Let $p\in \min R$.  Let $q$ be minimal over $p+\ann N$.   By assumption, there exists a small maximal Cohen-Macaulay module  $M$ over $(R/p)_q$.  Now since $R$ is equidimensional at every localization, $M$ is also maximal Cohen-Macaulay over $R_q$.  Since $\pd N_q<\infty$, it is not difficult to show that $\tor^{R_q}_{>0}(M,N_q)=0$. Hence, {\bf DE} implies
\[\dim M+\dim N_q=\dim R_q+\dim M\tns N_q=\dim R_q\]
and so we have $\ld(N_q)<\infty$.  This implies that $q$ is in $\min N$.

\end{proof}

\subsection{Examples}

We now list some cases where these conjectures hold.

\begin{prop}

When $R$ is a regular local ring, $R$ satisfies {\bf Strong DI}.  When $R$ is an unramified regular local ring,  $R$  satisfies {\bf DE}.

\end{prop}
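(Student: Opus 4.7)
The plan is to derive both statements from Serre's intersection theorem (DI, Vanishing, Positivity), as cited in the introduction of Section 6. Over a regular local ring every finitely generated module has finite projective dimension, so in the first assertion the hypothesis $\tor_{\gg 0}(M,N)=0$ is automatic; it therefore suffices to prove the unconditional height inequality $\dim M+\dim N\le \dim R+\dim M\tns N$. For DE, Strong DI already gives one direction, and the reverse will come from Serre's Positivity in the unramified case.

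For Strong DI I will choose minimal primes $q_M\in\min\ann M$ and $q_N\in\min\ann N$ realizing $\dim R/q_M=\dim M$ and $\dim R/q_N=\dim N$, available since a regular local ring is an equidimensional catenary domain, and let $\mathfrak{P}$ be a minimal prime of $q_M+q_N$. Then $\mathfrak{P}\in \supp(M\tns N)$, and after localizing at $\mathfrak{P}$ the sum $q_M R_\mathfrak{P}+q_N R_\mathfrak{P}$ becomes $\mathfrak{P}R_\mathfrak{P}$-primary. Serre's classical DI in the regular local ring $R_\mathfrak{P}$ then gives $\h\mathfrak{P}\le \h q_M+\h q_N$, which, translated through $\dim R/q=\dim R-\h q$, yields $\dim R/\mathfrak{P}\ge\dim M+\dim N-\dim R$ and hence the desired bound, since $\dim M\tns N\ge \dim R/\mathfrak{P}$.

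For DE I will select a minimal prime $\mathfrak p$ of $\supp(M\tns N)$ with $\dim R/\mathfrak p=\dim M\tns N$ and localize. Minimality forces $\ell_{R_\mathfrak p}(M_\mathfrak p\tns N_\mathfrak p)<\infty$, the Tor-vanishing hypothesis descends to $R_\mathfrak p$, and so $\chi^{R_\mathfrak p}(M_\mathfrak p,N_\mathfrak p)=\ell(M_\mathfrak p\tns N_\mathfrak p)>0$. Serre's Positivity, applied in the unramified regular local ring $R_\mathfrak p$, then yields $\dim M_\mathfrak p+\dim N_\mathfrak p=\h\mathfrak p$. Combining this with the general inequality $\dim X_\mathfrak p+\dim R/\mathfrak p\le \dim X$ for $X=M$ and $X=N$ (valid in any catenary equidimensional ring, since it equals $\max\{\dim R/q:q\in\min\ann X,\,q\subseteq\mathfrak p\}$) gives
\[\dim M+\dim N\ge (\dim M_\mathfrak p+\dim N_\mathfrak p)+2\dim R/\mathfrak p=\h\mathfrak p+2\dim R/\mathfrak p=\dim R+\dim M\tns N,\]
which is the missing inequality.

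The main obstacle is confirming that $R_\mathfrak p$ remains unramified so that Serre's Positivity actually applies. In equal characteristic this is immediate, and in mixed characteristic $(0,p)$ it follows because $\mathfrak p$ is generated by a regular sequence in the regular local ring $R$, so $\mathfrak p^2$ is $\mathfrak p$-primary; a witness to $p\in \mathfrak p^2 R_\mathfrak p$ would then yield $p\in\mathfrak p^2\subseteq \mm^2$, contradicting the unramifiedness of $R$. Once this is settled the rest is elementary bookkeeping with heights in a catenary equidimensional domain.
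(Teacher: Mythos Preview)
Your argument is essentially the same as the paper's: for \textbf{Strong DI} you pick minimal primes of $M$ and $N$ realizing their dimensions, localize at a prime minimal over their sum, and invoke Serre's \textbf{DI}; for \textbf{DE} you localize at a prime realizing $\dim M\otimes N$, observe that $\chi>0$ there, and invoke Serre's conjectures to get the missing inequality. The paper phrases the first part as an induction on $\dim R$ rather than a direct localization, but the content is identical.

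There is, however, a genuine error in your side argument that $R_{\mathfrak p}$ remains unramified. You assert that ``$\mathfrak p$ is generated by a regular sequence in the regular local ring $R$,'' but this is false in general: prime ideals in regular local rings need not be complete intersections (the $2\times 2$ minors of a generic $2\times 3$ matrix give a height-$2$ prime needing three generators, for instance). So your deduction that $\mathfrak p^2$ is $\mathfrak p$-primary collapses. The conclusion is nonetheless correct, and you can repair it in two ways. First, if $p\in\mathfrak p$ then $R/pR$ is regular (since $p\notin\mm^2$), hence so is its localization $(R/pR)_{\mathfrak p}=R_{\mathfrak p}/pR_{\mathfrak p}$, which exactly says $p\notin(\mathfrak p R_{\mathfrak p})^2$. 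Second---and this is closer to what the paper implicitly does---you can avoid the unramifiedness issue entirely: \textbf{Vanishing} is known for \emph{all} regular local rings, so $\chi(M_{\mathfrak p},N_{\mathfrak p})>0$ together with \textbf{DI} already forces $\dim M_{\mathfrak p}+\dim N_{\mathfrak p}=\dim R_{\mathfrak p}$, no Positivity needed. The paper simply writes ``Serre's intersection conjectures tell us\ldots'' without isolating which piece is being used.
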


\begin{proof}

Take $p\in\min M$ and $q\in\min N$ such that $\dim M=\dim R/p$ and $\dim N=\dim R/q$.  Furthermore, choose a $\pi\in\spec R$ that is minimal over $p+q$.  Suppose that $\pi=\mm$.  Then since $R$ is regular and $\dim R/p\tns R/q=0$, we have
\[\dim M+\dim N=\dim R/p+\dim R/q\le \dim R\le \dim R+\dim M\tns N.\]
So assume that $\pi\ne \mm$.  Since $p,q\sbe \pi$,  we have $\dim M_\pi+\dim R/\pi=\dim M$ and likewise for $N$.  By induction, we have the desired inequality:
\[\dim M+\dim N=\dim M_\pi+\dim N_\pi-2\dim R/\pi\le \dim R_\pi+\dim (M\tns N)_\pi-2\dim R/\pi\le\dim R+\dim M\tns N.\]

Now we prove the second statement. Suppose that $\tor_{>0}(M,N)=0$ and $R$ is unramified.  By {\bf Strong DI}, it suffices to show that $\dim R +\dim M\tns N\le \dim M+\dim N$.   Take $q\in \spec R$ such that $\dim R/q=\dim M\tns N$.  Then $\dim (M\tns N)q=0$ and $\chi(M_q,N_q)=\lambda(M\tns N)_q>0$.  Therefore Serre's intersection conjectures tell us that $\dim R_q=\dim M_q+\dim N_q$.  Localizing at $q$ yields
\[\dim R +\dim M\tns N=\dim R_q+2\dim R/q=\dim M_q+\dim N_q+2\dim R/q\le \dim M+\dim N\]
which yields the desired result.

\end{proof}

\begin{prop}

Suppose $R$ is a standard graded $k$-algebra with $k$ a field.  Then {\bf Strong DI} and {\bf DE} hold over graded modules.

\end{prop}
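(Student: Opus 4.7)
The plan is to exploit the Hilbert series of graded modules, which in the standard graded setting encodes Krull dimensions via the Hilbert--Serre theorem: for any nonzero finitely generated graded $R$-module $L$, one has $H_L(t) = P_L(t)/(1-t)^{\dim L}$ with $P_L(1) \neq 0$, so $\dim L$ is exactly the order of pole of $H_L(t)$ at $t=1$.

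The core computation is the Euler-characteristic identity
\[
\sum_{i \ge 0} (-1)^i H_{\tor^R_i(M,N)}(t) = \frac{H_M(t)\, H_N(t)}{H_R(t)}
\]
as rational functions in $t$. I would establish this by taking a minimal graded free resolution $F_\bl \to M$: each $F_i$ is a direct sum of twists $R(-a_{i,j})$, hence $H_{F_i \tns_R N}(t) = \bigl(H_{F_i}(t)/H_R(t)\bigr)\cdot H_N(t)$. Equating the two Euler characteristics of $F_\bl \tns_R N$ -- alternating sum of the $H_{F_i\tns_R N}$ versus of the homologies $H_{\tor^R_i(M,N)}$ -- produces the identity (this is a finite sum of rational functions whenever $\tor^R_{\gg 0}(M,N)=0$).

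For \textbf{DE}, the hypothesis $\tor^R_{>0}(M,N)=0$ collapses the left side to $H_{M\tns_R N}(t)$. Comparing orders of pole at $t=1$ on both sides yields the equality $\dim(M\tns_R N) = \dim M + \dim N - \dim R$, which is precisely \textbf{DE}; the degenerate case $M = 0$ or $N = 0$ is trivial, and otherwise $M \tns_R N \ne 0$ by Nakayama at the irrelevant maximal ideal, so Hilbert--Serre applies.

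For \textbf{Strong DI}, the extra ingredient is the support containment $\supp \tor^R_i(M,N) \sbe \supp(M\tns_R N)$ for every $i \ge 0$. This follows from a local Nakayama argument: if $(M\tns_R N)_\p = 0$ then $M_\p = 0$ or $N_\p = 0$, which forces $\tor^R_i(M,N)_\p = 0$ for all $i$. Consequently $\dim \tor^R_i(M,N) \le \dim(M\tns_R N)$ for every $i$. Since the pole order at $t=1$ of a finite alternating sum of rational functions is bounded above by the maximum of the summands' pole orders, the left side of the identity has pole order $\le \dim(M\tns_R N)$, while the right side has pole order $\dim M + \dim N - \dim R$. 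This delivers \textbf{Strong DI}. The only real care needed is the bookkeeping of orders of pole in degenerate cases (e.g.\ when some module vanishes or the inequality reads $-\infty$ on one side), which is routine; no deeper obstacle is expected.
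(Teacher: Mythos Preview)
Your proof is correct and follows essentially the same route as the paper: both use the Euler-characteristic identity $\sum_i (-1)^i H_{\tor_i(M,N)}(t)\cdot H_R(t) = H_M(t)H_N(t)$ (which the paper cites from \cite{AvramovBuchweitz93} while you sketch its derivation), then read off dimensions as pole orders at $t=1$, invoking the support containment $\supp\tor_i(M,N)\sbe\supp(M\tns N)$ for the {\bf Strong DI} inequality. The only cosmetic difference is that the paper speaks of the ``degree of the denominator'' of $\chi(M,N)(t)$ where you say ``order of pole at $t=1$''; these coincide in the standard graded setting.
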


\begin{proof}

Let $M$ and $N$ be graded modules.  Set 
\[\chi(M,N)(t)=\sum_{i\ge 0} (-1)^i H_{\tor_i(M,N)}(t)\]
where $H_{\tor_i(M,N)}(t)$ is the Hilbert series of $\tor_i(M,N)$. We know from \cite[Lemma 7]{AvramovBuchweitz93} that $\chi(M,N)(t)H_R(t)=H_M(t)H_N(t)$.  When $\tor_{\gg 0}(M,N)=0$, $\chi(M,N)(t)$ is a rational function, and so we let $d$ denote the degree of its denominator.  It follows that $d+\dim R=\dim M+\dim N$.  When $\tor_{> 0}(M,N)=0$, then $\chi(M,N)(t)= H_{M\tns N}(t)$ and so $d=\dim M\tns N$.  This gives us {\bf DE}.  When $\tor_{\gg 0}(M,N)=0$, we have $d\le \max_i \dim \tor_i(M,N)\le \dim M\tns N$ where the second inequality comes from the inclusion $\supp \tor_i(M,N)\sbe \supp M\tns N$.  This yields {\bf Strong DI}.  

\end{proof}

In the situation of the previous proposition, whenever $\tor_{\mbox{odd}}(M,N)=0$ and $\tor_{\gg 0}(M,N)=0$, we have $\dim M+\dim N=\dim R+\dim M\tns N$.  Indeed in this circumstance, the degree of the denominator of $\chi(M,N)(t)$ is just $\dim \bigoplus_i\tor_i(M,N)=\dim M\tns N$, and then proof of of the proposition still applies.  

\subsection{Applications}

These conjectures have some interesting consequences.

\begin{prop}

Suppose {\bf Para} holds.  If $p\in\spec R$ and there exists an $R$-module $N$ such that $\pd N<\infty$ and $\supp N=V(p)$, then $p$ contains every minimal prime of $R$.  

\end{prop}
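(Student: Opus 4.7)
The plan is to argue that \textbf{Para} applies directly, with essentially no work beyond unpacking the hypothesis on $\supp N$. The key preliminary observation is that for a finitely generated module $N$ over the Noetherian ring $R$, we have $\supp N = V(\ann N)$. Combined with the hypothesis $\supp N = V(p)$, this gives $V(\ann N) = V(p)$, and since $p$ is prime we conclude $\sqrt{\ann N} = p$. Hence $p$ is the unique minimal prime containing $\ann N$, so $\min N = \{p\}$.

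With this identification in hand, the argument is immediate. Let $\mathfrak{q}\in\min R$ be arbitrary. Since $\pd N<\infty$, \textbf{Para} supplies some $\mathfrak{r}\in\min N$ with $\mathfrak{q}\sbe\mathfrak{r}$. But $\min N=\{p\}$, so $\mathfrak{r}=p$ and therefore $\mathfrak{q}\sbe p$. As $\mathfrak{q}$ was arbitrary, $p$ contains every minimal prime of $R$, as claimed.

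There is no real obstacle here: the proposition is designed to be a direct corollary of \textbf{Para}. The only substantive input is the standard fact that a finitely generated module whose support equals $V(p)$ for a prime $p$ has $\{p\}$ as its set of minimal primes, and once that identification is made the conclusion follows in a single application of the \textbf{Para} hypothesis.
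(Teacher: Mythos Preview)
Your proof is correct and follows the same approach as the paper, which simply records that $\min N=\{p\}$ and declares the result immediate from \textbf{Para}. You have merely unpacked this one-line observation in more detail.
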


\begin{proof}

Since $\min N=\{p\}$, the result is immediate.

\end{proof}

\begin{prop}

Let $S$ be  an $R$-algebra which is finitely generated as an $R$ module.  Suppose {\bf DE} holds for both $R$ and $S$.  If  $\tor^R_{> 0}(M,N)=0$ and $\tor^S_{> 0}(M,N)=0$ for $S$-modules $M$ and $N$, then $\dim R=\dim S$.

\end{prop}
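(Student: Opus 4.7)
The plan is to apply the dimension formula \textbf{DE} over both rings and compare the two resulting equations using the natural surjection $M\tns_R N\twoheadrightarrow M\tns_S N$. The first step is to observe that for any finitely generated $S$-module $X$, viewed as an $R$-module via the algebra map $\phi\colon R\to S$, one has $\dim_R X=\dim_S X$; this is because $\ann_R X=\phi^{-1}(\ann_S X)$, so $R/\ann_R X$ injects in $S/\ann_S X$ as a finite (hence integral) extension, and integral extensions preserve Krull dimension. This makes $\dim M$ and $\dim N$ unambiguous, and in particular $\dim_R(M\tns_S N)=\dim_S(M\tns_S N)$ since $M\tns_S N$ is an $S$-module.

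Next, I apply \textbf{DE} over $R$ using $\tor^R_{>0}(M,N)=0$ to obtain
\[\dim M+\dim N=\dim R+\dim_R(M\tns_R N),\]
and \textbf{DE} over $S$ using $\tor^S_{>0}(M,N)=0$ to obtain
\[\dim M+\dim N=\dim S+\dim_S(M\tns_S N).\]
Subtracting the two equations yields $\dim R-\dim S=\dim_S(M\tns_S N)-\dim_R(M\tns_R N)$.

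The natural surjection $M\tns_R N\twoheadrightarrow M\tns_S N$ is $R$-linear, so $\supp_R(M\tns_S N)\sbe\supp_R(M\tns_R N)$, giving $\dim_S(M\tns_S N)=\dim_R(M\tns_S N)\le\dim_R(M\tns_R N)$. Hence $\dim R\le\dim S$. Conversely, since $S$ is module-finite over $R$, the embedding $R/\ker\phi\hookrightarrow S$ is a finite integral extension, so $\dim S=\dim(R/\ker\phi)\le\dim R$. Combining the two inequalities gives $\dim R=\dim S$.

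The argument is essentially formal, so I do not expect a real obstacle; the only care needed is to keep track of which ring governs each dimension and tensor product, and to route the comparison through the quotient map $M\tns_R N\twoheadrightarrow M\tns_S N$ that bridges the two applications of \textbf{DE}.
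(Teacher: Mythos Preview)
Your proof is correct and follows the same approach as the paper's: apply \textbf{DE} over $R$ and over $S$ and compare. The paper's two-line argument simply writes ``$\dim M\otimes N$'' in both equations without distinguishing $M\otimes_R N$ from $M\otimes_S N$; your version is more careful on this point, routing the comparison through the surjection $M\otimes_R N\twoheadrightarrow M\otimes_S N$ and the inequality $\dim S\le\dim R$ from module-finiteness, which makes the argument watertight.
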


\begin{proof}

The result follows from comparing $\dim M+\dim N=\dim R+\dim M\tns N$ and $\dim M+\dim N=\dim S+\dim M\tns N$.

\end{proof}

The New Intersection Theorem states that if $\pd N<\infty$ then $\dim M\le \pd N+\dim M\tns N$. We can use {\bf Strong DI} to give a similar inequality for Cohen-Macaulay rings.  However, before proceeding we need a definition.

\begin{definition}

When $R$ is Cohen-Macaulay, set $\mbox{CM-}\dim N=\depth R-\dim N$.

\end{definition}

Actually, $\mbox{CM-}\dim$ is a well studied homological dimension which can be defined even for non-Cohen-Macaulay rings.  Since our attention is restricted to the Cohen-Macaulay case, we do not give the full definition, but refer the reader to \cite{Gerko01}.

\begin{prop}

Suppose $R$ is Cohen-Macaulay and satisfies {\bf Strong DI}.  Then if $\tor_{\gg 0}(M,N)=0$, we have $\dim M\le \mbox{CM-}\dim N+\dim M\tns N$.

\end{prop}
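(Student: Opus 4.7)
The plan is to observe that the desired inequality is simply a rearrangement of the Strong DI inequality once the definition of CM-dimension is unpacked. First, I would note that since $R$ is assumed to be Cohen-Macaulay, we have $\depth R = \dim R$, and therefore
\[\mbox{CM-}\dim N = \depth R - \dim N = \dim R - \dim N.\]

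Next, I would invoke the hypothesis that Strong DI holds over $R$. Applied to the pair $(M,N)$ with $\tor_{\gg 0}(M,N)=0$, it yields
\[\dim M + \dim N \le \dim R + \dim M \tns N.\]
Subtracting $\dim N$ from both sides gives
\[\dim M \le (\dim R - \dim N) + \dim M \tns N = \mbox{CM-}\dim N + \dim M \tns N,\]
which is the stated conclusion.

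There is no serious obstacle here: the content of the proposition lies entirely in Strong DI itself, and the CM-dimension language simply repackages the deficiency $\dim R - \dim N$ in a way that parallels the classical New Intersection Theorem formulation $\dim M \le \pd N + \dim M \tns N$. The only thing worth being careful about is the equality $\depth R = \dim R$ used to rewrite the definition of CM-dim, which requires the Cohen-Macaulay hypothesis on $R$ that is explicitly part of the assumption.
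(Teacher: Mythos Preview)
Your proof is correct and is essentially the paper's own argument: apply {\bf Strong DI} to $M,N$ and rearrange, using $\dim R=\depth R$. The only cosmetic difference is that the paper first inserts $\depth N\le\dim N$ and ends with $\depth R-\depth N$ rather than your $\dim R-\dim N$ (identifying that quantity with $\mbox{CM-}\dim N$, at slight variance with its own stated definition); your version works directly with the definition as given and is if anything cleaner.
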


\begin{proof}

The assumptions imply
\[\depth N+\dim M\le \dim N+\dim M\le \dim R+\dim M\tns N=\depth R+\dim M\tns N\]
which tells us that 
\[\dim M\le \depth R-\depth N+\dim M\tns N=\mbox{CM-}\dim N+\dim M\tns N.\]

\end{proof}

Recall that $S_n$ is the collection of modules such that  $\depth M_p\ge \min\{n,\h p\}$.

\begin{prop}

Suppose $(R,\mm)$ is a local Cohen-Macaulay and the depth formula holds at every localization.  Furthermore, suppose that either
\begin{enumerate}
\item {\bf Strong DI} holds when one of the modules has finite projective dimension
\item {\bf DI} holds at every localization  
\end{enumerate}
Assume $N\ne 0$ and that there exists a module $L$ such that $\pd L\le \infty$ and $\supp L=N$.   Then if $M\tns N$ satisfies $S_n$, we have $\{p\in\supp M\mid \h p< n\}\sbe \supp N$.

\end{prop}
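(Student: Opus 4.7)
The plan is to argue by contradiction: suppose there is $p\in\supp M$ with $\h p<n$ and $p\notin\supp N$, and I will produce a prime $q\spne p$ at which two incompatible estimates for $\dim N_q$ appear. Take $q$ to be a minimal prime of $p+\ann N$ in $R$. Since $p\notin V(\ann N)=\supp N$ we have $\ann N\not\sbe p$, so $q\spne p$; by construction $q\spe\ann N$ gives $q\in\supp N$, and $q\spe p\in\supp M$ puts $q\in\supp M$, hence $q\in\supp(M\tns N)$. The minimality of $q$ makes $(p+\ann N)R_q$ primary to $qR_q$.

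For the upper bound on $\dim N_q$ I plan to apply the dimension inequality at $R_q$ to the modules $R/p$ and $L$. Because $\pd_R L<\infty$ and $\supp L=\supp N$, any prime $q'\sbe q$ in $V(p)\cap\supp L=V(p)\cap\supp N$ automatically contains $p+\ann N$, so by minimality $q'=q$; this forces $((R/p)\tns L)_q$ to have finite length over $R_q$. Under hypothesis (2), DI at $R_q$ then yields
\[\dim(R_q/pR_q)+\dim L_q\le\h q,\]
and since $\dim(R_q/pR_q)=\h q-\h p$ and $\dim L_q=\dim N_q$ (equal supports give equal dimensions), this gives $\dim N_q\le\h p$. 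Under hypothesis (1) the same finite-length observation collapses Strong DI at $R_q$ (with the zero-dimensional tensor product term) to the identical bound.

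For the lower bound, the $S_n$ hypothesis at $q\in\supp(M\tns N)$ gives $\depth(M\tns N)_q\ge\min\{n,\h q\}$, and since depth is bounded by dimension while $\supp(M\tns N)_q\sbe\supp N_q$, we get $\dim N_q\ge\min\{n,\h q\}$. Combining the two bounds yields $\min\{n,\h q\}\le\h p$: if $\h q\le n$ this forces $\h q\le\h p$, contradicting $q\spne p$ (which gives $\h q\ge\h p+1$); if $\h q>n$ it forces $n\le\h p$, contradicting $\h p<n$. Either way we arrive at a contradiction.

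The delicate point is the finite-length verification of $((R/p)\tns L)_q$ underpinning the upper bound; the switch from $N$ (about which we have no homological control) to the auxiliary module $L$ (supplying the finite projective dimension needed to invoke DI) is what makes the whole argument work, and is the precise role played by the hypothesis on the existence of $L$.
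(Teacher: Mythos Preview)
Your argument under hypothesis (2) is correct, and in fact your lower bound
\[
\dim N_q \ge \dim(M\otimes N)_q \ge \depth(M\otimes N)_q \ge \min\{n,\h q\}
\]
is cleaner than the paper's: the paper reaches the same conclusion $p=q$ via a longer chain that passes through $\depth M_p$, invoking the localization inequality $\depth M_p \ge \depth M_q - \dim(R/p)_q$ and then the depth formula at $q$. Your route bypasses both of these, so you never actually use the hypothesis that the depth formula holds at every localization.

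There is, however, a genuine gap in your treatment of hypothesis (1). You write that ``the same finite-length observation collapses Strong DI at $R_q$ to the identical bound,'' but hypothesis (1) only asserts Strong DI over $R$ itself; contrast the wording with hypothesis (2), which explicitly says ``at every localization.'' The paper handles case (1) by applying Strong DI \emph{globally} to the pair $(L,R/p)$, obtaining
\[
\dim L + \dim R/p \le \dim R + \dim(L\otimes R/p),
\]
and then descending to $R_q$ using the Cohen--Macaulay identities $\dim N_q \le \dim N - \dim R/q$, $\dim(R/p)_q \le \dim R/p - \dim R/q$, and $\dim R - \dim R/q = \dim R_q$. For this descent to terminate at $\dim R_q$ one needs $\dim(L\otimes R/p) = \dim R/q$, so the paper chooses $q$ to be a minimal prime of $L\otimes R/p$ with $\dim R/q$ \emph{maximal}, not an arbitrary one. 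Your contradiction framework still works with this refined choice of $q$ (any minimal prime of $L\otimes R/p$ strictly contains $p$ once $p\notin\supp N$), so the repair is local; but as written your step under (1) is not justified.
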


\begin{remark}

When $\pd N\le \infty$,  \cite[Theorem 3.1]{CelikbasPiepmeyer14} states that $M$ satisfies $S_n$ too.  When $\supp M\sbe \supp N$, this result is trivial by the depth formula.  Hence,  \cite[Theorem 3.1]{CelikbasPiepmeyer14} is actually a statement about the behaviour of $\depth M_p$ for $p$ in $\supp M\del\supp N$.  In this respect, our results are similar.  In fact, the arguments are similar too.  

\end{remark}

\begin{proof}

Take $p\in \supp M$ with $\h p< n$. Choose a $q\in \min L\tns R/p$ such that $\dim R/q=\dim L\tns R/p$.  Note that $p\in\supp N=\supp L$.  We claim that $\dim N_q+\dim (R/p)_q\le \dim R_q$.  Suppose {\bf DI} holds  for $R_q$.  Then since $\dim L_q\tns(R/p)_q=0$, we have
\[\dim N_q+\dim (R/p)_q=\dim L_p+\dim (R/p)_q\le \dim R_q.\]
Now suppose {\bf DI} holds when one of the modules has finite projective dimension.  Then we have the following
\begin{align*}
\dim N_q+\dim (R/p)_q\le \dim N+&\dim R/p-2\dim R/q\\
	&=\dim L+\dim R/p-2\dim R/q\le \dim R +\dim L\tns R/p-2\dim R/q=\dim R_q
\end{align*}
which establishes the claim.

We now make the following computation.  The first inequality is \cite[A.6.2]{Christensen00}.
\begin{align*}
\h p	&\ge \depth M_p \\
	&\ge  \depth M_q-\dim (R/p)_q\\
	&\ge  \depth M_q+\dim N_q-\dim R_q\\
	&\ge \depth M_q+\depth N_q-\depth R_q\\
	& = \depth (N\tns M)_q\\
	& \ge\min\{n, \h q\} 
\end{align*}
Since $\h p< n$ by assumption, we have $\h p\ge \h q$ which forces $p=q$.  Therefore, $p\in\supp N$ as desired.

\end{proof}

%
%
%
%
%
%
%
%

\section{Questions and examples}\label{supvarstupidquestions}

What happens to Theorem \ref{supvarmain1} if we remove the assumption that all the Tor modules vanish?  The following two examples show that in general neither containment holds.  

\begin{example}\label{supvarEx1}

Let $k$ be a field and set $R=k[x,y]/(xy)$.  Now the modules $M=R/(x+y)$ and $N=R/(x-y)$ have finite projective dimension.  However, we have
\[\join(V^*(M),V^*(N))=\emptyset\nsupseteq V^*(M\tns N)=\PPP_{\tilde{k}}^0\]
showing that $V^*(M\tns N)$ is not always contained in $\join(V^*(M),V^*(N)$, even if the $\tor_{\gg 0}(M,N)=0$.  

\end{example}  

\begin{example}\label{supvarEx2}

Set $R=\QQ[a,b,c]/(a^2-b^2,b^3-c^3)$ and
\[M=\coker\left[\begin{matrix} 
8ab^2c^2+4abc^3+6b^2c^3+8ac^4+6bc^4+c^5 & 3ab+4b^2+7ac+7bc+4c^2 \\ 4ab^2c^2+6abc^3+9b^2c^3+ac^4+9bc^4+4c^5 & 4ab+5b^2+3ac+5bc+5c^2\\
 \end{matrix}\right]\]
 \[N=\frac{R}{(8ab^2c+8b^2c^2+6ac^3+5bc^3+c^4,3ab+2b^2+3ac+2bc+9c^2)}.\]
 An easy computation in Macaulay2 shows that 
 \[\cx M=0\quad\quad\quad \cx N=2\quad\quad\quad \cx M\tns N=1.\]
 This shows that $\join(V^*(M),V^*(N))=V^*(N)\nsubseteq V^*(M\tns N)$.

\end{example}

We now give an example where none of the modules involved have finite projective dimension.

\begin{example}\label{supvarEx3.5}

Set $R=\QQ[a,b,c,d]/(a^2-b^2,b^2-c^2,d^2)$ and define the ideal
\[I=\left(\frac{3}{5}a+\frac{8}{7}b+\frac{5}{2}c,2a+\frac{1}{2}b+3c,d\right).\]
A simple computation in Macaulay2 shows that
\[V^*(I)=V(3740x_1+477x_2)\]
\[V^*(I\tns I)=V(0)=\PPP_{\tilde{\QQ}}^2\]
Were $\tilde{\QQ}[x_1,x_2,x_3]$ is the ring of cohomological operators over the algebraic closure of $\QQ$.  Since $V^*(I)$ is linear, we have
\[\join(V^*(I),V^*(I))=V^*(I)\nsupseteq V^*(I\tns I).\]

\end{example}

\begin{example}\label{supvarEx3}

Let $R=\QQ[a,b,c]/(a^2,b^2,c^2)$ and $I=(b)$ and $J=(ab)$.  An easy computation yields $V^*(R/I)=V(x_1,x_3)$ and $V^*(R/J)=V(x_1)$ where  $\tilde{\QQ}[x_1,x_2,x_3]$ is the ring of cohomological operators over the algebraic closure of $\QQ$.  Now because $V^*(R/J)$ is a linear space containing $V^*(R/I)$, we have
\[\join(V^*(R/I),V^*(R/J))=V^*(R/J)\nsubseteq V^*(R/J\tns R/I)=V^*(R/(I+J))=V^*(I).\]

\end{example}

The authors wondered if there was a relation between the stable behavior of $V^*(\tor_i(M,N))$ and $\join(V^*(M),V^*(N))$. Investigations using Macaulay2 compelled them to ask the following questions.

\begin{question}\label{supvarQ1}

Does 
\[\bigcup_{i=0}^n V^*(\tor_i(M,N))\]
stabilize as $n$ tends to infinity?

\end{question}

\begin{question}\label{supvarQ2} 

For any modules $M$ and $N$, do we have the following?
\[\join(V^*(M),V^*(N))\sbe \bigcup_{i=0}^\infty V^*(\tor_i(M,N))\]

\end{question}

\begin{prop}

Questions \ref{supvarQ1} and \ref{supvarQ2} are true when $\tor_{\gg 0}(M,N)=0$.

\end{prop}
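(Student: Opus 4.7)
The plan is as follows. Question \ref{supvarQ1} is immediate: if $\tor_{\gg 0}(M,N) = 0$, then only finitely many $\tor_i(M,N)$ are nonzero, so the nested union $\bigcup_{i=0}^n V^*(\tor_i(M,N))$ is eventually a finite union and stabilizes automatically. The remainder of the argument is devoted to Question \ref{supvarQ2}.

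The strategy for Question \ref{supvarQ2} is to shift the higher Tor out of the way by passing to syzygies of $N$, apply Theorem \ref{supvarmain1}, and then transport the resulting identity back down through short exact sequences. Choose $n$ with $\tor_{>n}(M,N) = 0$. For any $k \geq n$ and $i \geq 1$, dimension shifting gives $\tor_i(M, \zz^k N) \cong \tor_{i+k}(M,N) = 0$, so Theorem \ref{supvarmain1} applies to the pair $(M, \zz^k N)$ and yields $V^*(M \tns \zz^k N) = \join(V^*(M), V^*(\zz^k N)) = \join(V^*(M), V^*(N))$, where the last equality is by Proposition \ref{supvar2.4}.

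Next I would prove by induction on $k$ the containment $V^*(M \tns \zz^k N) \sbe V^*(M) \cup \bigcup_{i=0}^k V^*(\tor_i(M,N))$. The base case $k=0$ is trivial. For the inductive step, tensor the short exact sequence $0 \to \zz^{k+1} N \to F_k \to \zz^k N \to 0$ with $M$ to obtain the four-term exact sequence $0 \to \tor_{k+1}(M,N) \to M \tns \zz^{k+1} N \to M \tns F_k \to M \tns \zz^k N \to 0$. Splitting this into two short exact sequences at the image of the middle map and applying Proposition \ref{supvar2.4} twice (noting $V^*(M \tns F_k) = V^*(M)$) gives $V^*(M \tns \zz^{k+1} N) \sbe V^*(\tor_{k+1}(M,N)) \cup V^*(M) \cup V^*(M \tns \zz^k N)$, from which the induction closes. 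Combined with the previous paragraph, this produces $\join(V^*(M), V^*(N)) \sbe V^*(M) \cup \bigcup_{i \geq 0} V^*(\tor_i(M,N))$. Running the symmetric argument with syzygies of $M$ in place of $N$ yields $\join(V^*(M), V^*(N)) \sbe V^*(N) \cup \bigcup_{i \geq 0} V^*(\tor_i(M,N))$.

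Intersecting the two inclusions gives $\join(V^*(M), V^*(N)) \sbe (V^*(M) \cap V^*(N)) \cup \bigcup_{i \geq 0} V^*(\tor_i(M,N))$, and since $\tor_{\gg 0}(M,N) = 0$, Theorem \ref{supvar2.9} forces $V^*(M) \cap V^*(N) = \emptyset$, which closes the proof. The main conceptual point, rather than any technical obstacle, is the observation that neither one-sided syzygy bound is tight on its own; only after intersecting the two bounds does the disjointness guaranteed by Theorem \ref{supvar2.9} absorb the leftover $V^*(M)$ and $V^*(N)$ terms.
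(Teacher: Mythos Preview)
Your proof is correct and follows essentially the same approach as the paper: reduce to Theorem \ref{supvarmain1} by passing to syzygies, use the four-term exact sequence and Proposition \ref{supvar2.4} to bound the support of the tensor product at each stage, and finish by intersecting the two symmetric bounds and invoking $V^*(M)\cap V^*(N)=\emptyset$ from Theorem \ref{supvar2.9}. The only organizational difference is that the paper inducts directly on the minimal $n$ with $\tor_{>n}(M,N)=0$ (taking one syzygy of $M$ at a time and applying the full inductive hypothesis to $(\zz M,N)$), whereas you first jump to a high syzygy $\zz^k N$ to invoke Theorem \ref{supvarmain1} once, and then run a separate downward induction on $k$; the underlying mechanism is identical.
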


\begin{proof}

The first question is trivially true in this case.  We prove that the second question is true using induction on the minimal $n$ such that $\tor_{>n}(M,N)=0$.  When $n=0$, the the statement follows from Theorem \ref{supvarmain1}.  So suppose $n>0$.  Then we have $\tor_{>n-1}(\zz M,N)=0$ and so by induction, we have
\begin{align*}
\join(V^*(M),V^*(N))=&\join(V^*(\zz M),V^*(N))\\
&\sbe \bigcup_{i=0}^\infty V^*(\tor_i(\zz M,N))= \bigcup_{i=2}^\infty V^*(\tor_i(M,N))\cup V^*(\zz M\tns N).
\end{align*}
Note that  $M$ is not free and so $\zz M$ is not zero.  The short exact sequence 
\[0\to \zz M\to R^m\to M\to 0\]
 yields
\[0\to \tor_1(M,N)\to \zz M\tns N \to N^m\to M\tns N\to 0.\]
It follows that $V^*(\zz M\tns N)\sbe V^*(N)\cup V^*(M\tns N)\cup V^*(\tor_1(M,N))$ and hence
\[\join(V^*(M),V^*(N))\sbe  \bigcup_{i=0}^\infty V^*(\tor_i(M,N))\cup V^*(N).\]
Similarly, we have 
\[\join(V^*(M),V^*(N))\sbe  \bigcup_{i=0}^\infty V^*(\tor_i(M,N))\cup V^*(M)\]
but since $V^*(M)\cap V^*(N)=\emptyset$, this shows the desired result.

\end{proof}

Question \ref{supvarQ1} is also true when $R$ is a hypersurface, because over such rings $\tor_i(M,N)$ is eventually periodic.  The following example shows that even over a hypersurface  $V^*(\tor_i(M,N))$  does not necessarily stabilize.   

\begin{example}\label{supvarEx4}

Let $k$ be a field and set $R=k[x,y]/(xy)$.  It is easy to show that  $\tor_{\mbox{odd}}(R/(x),R/(y))=0$ and $\tor_{\mbox{even}}(R/(x),R/(y))\cong k$.  The projective dimension of the former is obviously finite, and the projective dimension of the latter is infinite.  Thus $V^*(\tor_i(R/(x),R/(y)))$ cannot stabilize.

\end{example}

Note that Example \ref{supvarEx3.5} shows that we cannot hope to replace the containment in Question \ref{supvarQ2} with equality.  We now show some potential applications.

\begin{prop}\label{supvar5.1}

Suppose Question \ref{supvarQ1} is true for a pair of modules $M$ and $N$.  If we have 
\[\join(V^*(M),V^*(N))=\bigcup_{i=0}^\infty V^*(\tor_i(M,N))\]
then we have the inequality 
\[\max_i\{\cx \tor_i(M,N)\}\le\cx M+\cx N\le \cx \{ a_n= \sum_{i+j=n} \beta_j(\tor_i(M,N))\}.\] 

\end{prop}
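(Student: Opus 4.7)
The plan is to split the double inequality and attack each half with a different tool. The left inequality follows immediately from the hypothesis via the geometry of joins; the right one requires a hyper-Tor spectral sequence argument.

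For the left inequality, the assumed equality $\join(V^*(M),V^*(N))=\bigcup_{i}V^*(\tor_i(M,N))$ gives $V^*(\tor_i(M,N))\sbe\join(V^*(M),V^*(N))$ for each $i$. Combining the join-dimension bound of Theorem \ref{2.13} with the identity $\cx=\dim V^*+1$ of Proposition \ref{2.14} yields
\[\cx\tor_i(M,N)=\dim V^*(\tor_i(M,N))+1\le\dim V^*(M)+\dim V^*(N)+2=\cx M+\cx N,\]
and taking the maximum over $i$ gives the left inequality. Notice that this half only uses the containment $V^*(\tor_i)\sbe\join$, not the full equality or the stabilization hypothesis.

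For the right inequality, let $F_\bl$ and $G_\bl$ be minimal free resolutions of $M$ and $N$, and set $T=\mathrm{Tot}(F\tns G)$. Then $T$ is a minimal complex of free modules: $T_n$ has rank $c_n:=\sum_{i+j=n}\be_i(M)\be_j(N)$ and $H_n(T)=\tor^R_n(M,N)$. Minimality forces $T\tns k$ to have zero differential, so $\dim_k H_n(T\tns k)=c_n$. Now form the first-quadrant bicomplex $T\tns K$, where $K$ is a free resolution of $k$. One of its two spectral sequences collapses (since each $T_p$ is free, $T_p\tns K$ resolves $T_p\tns k$) and has abutment $H_n(T\tns k)$; the other is the standard hyper-Tor spectral sequence
\[E_2^{p,q}=\tor^R_q(\tor^R_p(M,N),k)\Longrightarrow H_{p+q}(T\tns k).\]
Convergence is automatic because the bicomplex is first-quadrant. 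Since each $E_\infty^{p,q}$ is a subquotient of $E_2^{p,q}$, summing over total degree $n$ gives
\[c_n=\sum_{p+q=n}\dim_k E_\infty^{p,q}\le\sum_{p+q=n}\dim_k E_2^{p,q}=\sum_{i+j=n}\be_j(\tor^R_i(M,N))=a_n.\]
Finally, the Poincar\'e series identity $\sum_n c_n t^n=P_M(t)P_N(t)$, together with the standard fact that this product has a pole of order $\cx M+\cx N$ at $t=1$, yields $\cx\{c_n\}=\cx M+\cx N$; combined with $a_n\ge c_n$, this gives $\cx\{a_n\}\ge\cx M+\cx N$.

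The main obstacle is verifying that the collapsing spectral sequence really has abutment of $k$-dimension $c_n$. This hinges on the minimality of $T=F\tns G$, which holds because the differentials of $F$ and $G$ both take values in $\mm$ and hence so does the total differential on $T$. Once minimality is in hand the remainder is essentially bookkeeping, since the first-quadrant shape of the bicomplex removes all convergence concerns.
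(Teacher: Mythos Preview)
Your proof is correct and follows essentially the same approach as the paper: the left inequality via the join-dimension bound of Theorem \ref{2.13}, and the right via the spectral sequence inequality $c_n \le a_n$ (which the paper isolates as Lemma \ref{supvar5.2}) combined with $\cx\{c_n\} = \cx M + \cx N$, for which the paper cites \cite{Miller98} while you sketch the pole-order argument directly. Your remark that the left inequality uses only the containment $V^*(\tor_i)\sbe\join$, and not the stabilization hypothesis, is a correct sharpening.
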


\begin{proof}

If we assume that Question \ref{supvarQ1} holds, then the assumptions imply \[
\dim \join(V^*(M),V^*(N))=\max\{\dim V^*(\tor_i(M,N))\}.\]  
This yields the inequality
\[\max\{\cx \tor_i(M,N)\}-1=\max\{\dim V^*(\tor_i(M,N))\}\le \dim V^*(M)+V^*(N)+1=\cx M+\cx N-1\]
It suffices to show the second inequality.  By  \cite{Miller98},  we have 
\[\cx \sum_{i+j=n} \beta_i(M)\beta_j(N)=\cx M+\cx N,\] hence the second inequality follows from the next lemma, Lemma \ref{supvar5.2}.  

\end{proof}

\begin{lemma}\label{supvar5.2}

We have the inequality 
\[\sum_{i+j=n} \beta_i(M)\beta_j(N)\le \sum_{i+j=n} \beta_j(\tor_i(M,N)).\]

\end{lemma}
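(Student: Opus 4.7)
The plan is a hyper-$\tor$ spectral sequence argument built on the double complex of minimal resolutions. First I would take minimal free resolutions $F_\bl \to M$ and $G_\bl \to N$ and form the total complex $K_\bl = \mbox{Tot}(F_\bl \tns_R G_\bl)$. This is a bounded-below complex of free $R$-modules with $K_n$ of rank $\sum_{i+j=n}\beta_i(M)\beta_j(N)$ and with $H_i(K_\bl) \cong \tor_i^R(M,N)$.

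Next I would compute $K_\bl \tns_R k$ directly. Because $F_\bl$ and $G_\bl$ are minimal, their differentials land in $\mm F_\bl$ and $\mm G_\bl$ respectively, so the horizontal and vertical differentials of the double complex $F_\bl \tns_R G_\bl$ each take values in $\mm(F_\bl \tns_R G_\bl)$. Consequently the differential on $K_\bl \tns_R k$ is identically zero, and
\[\dim_k H_n(K_\bl \tns_R k) \;=\; \mbox{rank}(K_n) \;=\; \sum_{i+j=n}\beta_i(M)\beta_j(N).\]

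Now fix a free resolution $P_\bl \to k$ and consider the double complex $K_\bl \tns_R P_\bl$. Since each $K_p$ is free and hence flat, the spectral sequence obtained by first computing homology in the $P$-direction collapses on page one and abuts to $H_n(K_\bl \tns_R k)$ in total degree $n$. Computing homology in the other direction first instead uses $H_p(K_\bl) = \tor_p^R(M,N)$ and yields the second spectral sequence of hyperhomology,
\[E^2_{p,q} \;=\; \tor^R_q\bigl(\tor^R_p(M,N),\,k\bigr) \;\Longrightarrow\; H_{p+q}(K_\bl \tns_R k),\]
so that $\dim_k E^2_{p,q} = \beta_q(\tor_p(M,N))$. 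Since $E^\infty$ is a bigraded subquotient of $E^2$, one has $\sum_{p+q=n}\dim_k E^\infty_{p,q} \le \sum_{p+q=n}\dim_k E^2_{p,q}$, and combining with the previous paragraph gives
\[\sum_{i+j=n}\beta_i(M)\beta_j(N) \;=\; \dim_k H_n(K_\bl \tns_R k) \;\le\; \sum_{p+q=n}\beta_q(\tor_p(M,N)) \;=\; \sum_{i+j=n}\beta_j(\tor_i(M,N)),\]
which is precisely the claim (after renaming indices). There is no real obstacle; the only bookkeeping step is to check that both differentials of $F_\bl \tns G_\bl$ vanish modulo $\mm$, which is immediate from minimality.
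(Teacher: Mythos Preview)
Your argument is correct and is essentially the same as the paper's: both form the total complex of minimal resolutions of $M$ and $N$, tensor with a free resolution of $k$, use minimality to see one spectral sequence collapses with abutment of dimension $\sum_{i+j=n}\beta_i(M)\beta_j(N)$, and compare with the $E^2$-page $\tor_q(\tor_p(M,N),k)$ of the other spectral sequence. The only cosmetic difference is that you compute $H_n(K_\bullet\tns k)$ directly before invoking the spectral sequences, whereas the paper reads it off the collapsed page.
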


\begin{proof}

Let $F_\bl$ and $G_\bl$ be minimal free resolutions of $M$ and $N$ and $P_\bl$ a  free resolution of $k$.  Let $T_\bl$ be the total complex $F_\bl\tns G_\bl$.  Let $E^0$ be the double complex $T\tns P_\bl$.  Computing the spectral sequence using the vertical filtration gives $E^1_{i,0}=T_i\tns k$ and $E^1_{i,j}=0$ for $j\ne 0$.  Since $F_\bl$ and $G_\bl$ are minimal, the differential of $T$ is given by  matrices whose entries lie in $\mm$.  Therefore the differential of $E^1$ is zero, and so the spectral sequence collapses. Computing the spectral sequence using the horizontal filtration gives $E^1_{i,j}=\tor_i(M,N)\tns P_j$ and $E^2_{i,j}=\tor_j(\tor_i(M,N),k)$.  We thus have
\[k^{\be_j(\tor_i(M,N))}\implies \bigoplus_{i'+j'=i+j} k^{\be_{i'}(M)\beta_{j'}(N)}.\]
The desired inequality follows since $E^\infty_n$ is a graded vector space whose associated graded space is a quotient of a subspace of $\bigoplus_{i+j=n} k^{\be_j(\tor_i(M,N))}$.  

\end{proof}

For closed sets $U,V\in\PPP_k^n$, it is known that $\dim \join(U,V)\le \dim U+\dim V+1$.  It is not known when precisely this equality is strict.  This question is particularly interesting when $U=V$ and has been the subject of much research. As the following shows, Question 1 and Question 2  are actually related to this topic.

\begin{prop}

Suppose Question 1 and Question 2 have positive answers.  Then for any modules $M$ and $N$ over a complete intersection ring,
\[\dim \join(V^*(M),V^*(N)\le \max_{i\in \NN} \cx \tor_i(M,N)-1.\]

\end{prop}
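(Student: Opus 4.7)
The plan is to chain the two hypotheses in one short argument. By the positive answer to Question \ref{supvarQ2},
\[\join(V^*(M),V^*(N))\sbe \bigcup_{i=0}^\infty V^*(\tor_i(M,N)).\]
By the positive answer to Question \ref{supvarQ1}, this ascending union stabilizes, so there exists an integer $n_0$ with
\[\bigcup_{i=0}^\infty V^*(\tor_i(M,N))=\bigcup_{i=0}^{n_0} V^*(\tor_i(M,N)).\]
The right-hand side is a \emph{finite} union of Zariski closed subsets of $\PPP^{c-1}_{\tilde{k}}$ and is therefore itself closed; this is the key reduction that Question \ref{supvarQ1} supplies.

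Now the dimension of a finite union of closed subsets of a Noetherian scheme equals the maximum of the dimensions of the constituents, so taking dimensions of both sides in the containment above gives
\[\dim \join(V^*(M),V^*(N)) \le \max_{0\le i\le n_0} \dim V^*(\tor_i(M,N)) \le \max_{i\in\NN} \dim V^*(\tor_i(M,N)).\]
Applying Proposition \ref{2.14}, which identifies $\dim V^*(X) = \cx X - 1$ for any finitely generated $R$-module $X$, and pulling the $-1$ outside the maximum, we obtain
\[\dim \join(V^*(M),V^*(N)) \le \max_{i\in\NN} \cx \tor_i(M,N) - 1,\]
which is the desired inequality.

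There is essentially no obstacle here: each hypothesis is used exactly once, with Question \ref{supvarQ2} providing the containment and Question \ref{supvarQ1} reducing an a priori infinite union to a finite one so that the dimension of a union is the maximum of the dimensions. The remaining step is the standard translation between dimension of the cohomological support and complexity afforded by Proposition \ref{2.14}.
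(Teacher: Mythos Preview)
Your proof is correct and follows the same line as the paper's own argument, which is a one-liner (``The result is obvious after recalling that $\dim V^*(\tor_i(M,N))=\cx \tor_i(M,N)-1$''); you have simply spelled out the details that the paper leaves implicit, namely the use of Question~\ref{supvarQ2} for the containment, Question~\ref{supvarQ1} to reduce to a finite union, and Proposition~\ref{2.14} for the translation to complexity.
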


\begin{proof}

The result is obvious after recalling that $\dim V^*(\tor_i(M,N))=\cx \tor_i(M,N)-1$.  

\end{proof}

\bibliographystyle{amsplain}
\bibliography{BibliographyII}

\end{document}